\newif\ifHAL
\definecolor{ceared}{HTML}{BB0000}   
\definecolor{color_dofs}{HTML}{A3167C}
\pgfplotsset{compat=1.16}
\patchcmd{\subequations}{\theparentequation\alph{equation}}
{\theparentequation\alph{equation}}{}{}
\newtheorem{theorem}{Theorem}[section]
\newtheorem{lemma}[theorem]{Lemma}
\newtheorem{remark}{Remark}[section]
\titleformat*{\section}{\LARGE\bfseries}
\titleformat*{\subsection}{\Large\bfseries}
\titleformat*{\subsubsection}{\large\bfseries}
\titlespacing*{\section}{0pt}{0.25cm}{1em}
\titlespacing*{\subsection}{0pt}{0.75em}{1em}
\newtheorem{theorem}{Theorem}[section]
\newtheorem{lemma}[theorem]{Lemma}
\newtheorem{remark}{Remark}[section]
\definecolor{ceared}{HTML}{BB0000}   
\definecolor{color_dofs}{HTML}{A3167C}
\renewcommand{\sc}{\textsc}
\renewcommand{\cal}[1]{\mathcal{#1}}
\renewcommand{\rm}{\mathrm}
\newcommand{\bb}[1]{\mathbb{#1}}
\newcommand{\bbm}{\mathbbm}
\newcommand{\bd}[1]{\boldsymbol{#1}}
\newcommand{\T}{\cal{T}}
\newcommand{\F}{\cal{F}}
\newcommand{\wh}[1]{\widehat{#1}}
\newcommand{\dofs}[3]{\rm{#1}_{\cal{#2}^\sc{#3}}}    
\newcommand{\mass}[2]{\cal{M}_{\T^\sc{#2}}^{#1}}  
\newcommand{\stab}[3]{{\Sigma}_{\cal{#2}^{\sc{#1}}\cal{#3}^{\sc{#1}}}} 
\newcommand{\stabdiag}[2]{{\Sigma}_{\cal{#2}^{\sc{#1}}}} 
\newcommand{\grad}[3]{\cal{G}^{#1}_{\cal{#2}^{\sc{#3}}}} 
\newcommand{\gradd}[4]{\cal{G}^{#1}_{\cal{#2}^{\sc{#4}}\cal{#3}^{\sc{#4}}}} 
\newcommand{\strain}[2]{\cal{H}^{#1}_{\cal{#2}}} 
\newcommand{\coupling}[1]{\cal{C}^{#1}_{\F^\Gamma}} 
\newcommand{\G}{\Gamma}                   
\newcommand{\domain}[1]{\Omega^{\sc{#1}}} 
\newcommand{\hho}{\textup{\sc{hho}}}
\newcommand{\hdg}{\scriptstyle{\textup{\sc{h$+$}}}}
\newcommand{\Ts}{\cal{T}^\sc{s}}
\newcommand{\Tf}{\cal{T}^\sc{f}}
\newcommand{\sF}{\sc{f}}
\newcommand{\sS}{\sc{s}}
\newcommand{\bet}{\bd{N}}
\newcommand{\be}{\textbf{e}}
\newcommand{\bG}{\bd{g}}
\newcommand{\br}{\bd{r}}
\newcommand{\bm}{\bd{m}}
\newcommand{\bw}{\bd{w}}
\newcommand{\cF}{\mathcal{F}}
\newcommand{\dt}{\partial{T}}
\newcommand{\tih}{\tilde{h}}
\newcommand{\ol}{\overline}
\newcommand{\sd}{\scaleto{\partial}{4pt}}
\newcommand{\sg}{\scaleto{\Gamma}{4pt}}
\newcommand{\bH}{\bd{H}}
\newcommand{\Mf}{\cal{M}^{\sF}}
\newcommand{\Ms}{\cal{M}^{\sS}}
\newcommand{\Ff}{\cal{F}^{\sF}}
\newcommand{\Fs}{\cal{F}^{\sS}}
\newcommand{\bmt}{\bd{m}_{\Tf}}
\newcommand{\etaf}{\bet_{\Tf}}
\newcommand{\etas}{\bbm{N}_{\Ts}}
\newcommand{\etafT}{\bet_{\Tf}}
\newcommand{\etasT}{\bbm{N}_{\Ts}}
\newcommand{\ehf}{\hat{e}_{\Mf}}
\newcommand{\ehs}{\hat{\textbf{e}}_{\Ms}}
\newcommand{\Usigmaf}{\bd{M}^k(\Tf)}
\newcommand{\UhatfD}{\widehat{P}{}_0^k(\Mf)}
\newcommand{\UhatsD}{\widehat{\bd{V}}{}_0^k(\Ms)}
\newcommand{\Usigmas}{\bbm{S}_{\rm{sym}}^k(\Ts)}
\newcommand{\cred}{\textcolor{black}}
\newcommand{\normalizecell}[1]{\strut#1\strut}
\numberwithin{equation}{section}
\newcommand{\bbPi}{%
  \cred{\rule{0.3pt}{0.66em}\hspace{-0.02cm}\Pi}%
} 
\begin{document} 

\title{Hybrid high-order methods for elasto-acoustic wave propagation in the time domain}
\ifHAL
\makeatletter
\newcommand{\keywords}[1]{\def\@keywords{#1}}   
\newcommand{\subjclass}[1]{\def\@subjclass{#1}}   
\begin{center}
\huge \@title \\[0.5cm]
\large 
Romain Mottier\footnote[2]{CEA, DAM, DIF, F-91297 Arpajon, France, and CERMICS, ENPC, Institut Polytechnique de Paris, F-77455 Marne-la-Vall\'ee cedex 2, and SERENA Project-Team, Centre Inria de Paris, F-75647 Paris, France. Email: romain.mottier@enpc.fr}, 
$\qquad$ Alexandre Ern\footnote[3]{CERMICS, ENPC, Institut Polytechnique de Paris, F-77455 Marne-la-Vall\'ee cedex 2, and SERENA Project-Team, Centre Inria de Paris, F-75647 Paris, France. Email: alexandre.ern@enpc.fr}, 
$\qquad$ Rekha Khot\footnote[4]{SERENA Project-Team, Centre Inria de Paris, F-75647 Paris, and CERMICS, ENPC, Institut Polytechnique de Paris, F-77455 Marne-la-Vall\'ee cedex 2, France. Email: rekha.khot@inria.fr}, 
$\qquad$ Laurent Guillot\footnote[5]{CEA, DAM, DIF, F-91297 Arpajon, France. Email: laurent.guillot@cea.fr}\\[0.5cm]
\end{center}
\else
\author{R. Mottier$^{1,2}$}
\footnotetext[1]{CERMICS, ENPC, Institut Polytechnique de Paris, F-77455 Marne-la-Vall\'ee cedex 2, \& SERENA Project-Team, Centre Inria de Paris, F-75647 Paris, France.; \email{romain.mottier@enpc.fr, alexandre.ern@enpc.fr, rekha.khot@inria.fr}}
\footnotetext[2]{CEA, DAM, DIF, F-91297 Arpajon, France; \email{laurent.guillot@cea.fr}}
\author{A. Ern$^1$}
\author{R. Khot$^1$}
\author{L. Guillot$^2$}
\fi
\keywords{Hybrid high-order methods (HHO), Elasto-acoustic coupling, Wave equations}
\subjclass{65M12, 65M60, 74J10, 74S05, 35L05}
\begin{abstract}
We devise a Hybrid High-Order (HHO) method for the coupling between the acoustic and elastic wave equations in the time domain. A first-order formulation in time is considered. The HHO method can use equal-order and mixed-order settings \cred{with polynomial degree $k\geq 0$ for the face unknowns}, together with $\cal{O}(1)$- or $\cal{O}(\frac{1}{h})$-stabilization. An energy-error estimate is established in the time-continuous case. A numerical spectral analysis is performed, showing that $\cal{O}(1)$-stabilization is required to avoid excessive CFL limitations for explicit time discretizations. Moreover, the spectral radius of the stiffness matrix is  found to be fairly independent of the geometry of the mesh cells. For analytical solutions on general meshes, optimal convergence rates of order $(k+1)$ are shown in both equal- and mixed-order settings using $\cal{O}(1)$-stabilization, whereas order $(k+2)$ is achieved in the mixed-order setting using $\cal{O}(\frac{1}{h})$-stabilization. Test cases with a Ricker wavelet as an initial condition showcase the relevance of the proposed method for the simulation of elasto-acoustic wave propagation across media with contrasted material properties.
\ifHAL
\\[0.15cm]
{\small 
\noindent \textbf{Mathematics Subjects Classification.} \@subjclass.\\
\noindent \textbf{Keywords.} \@keywords
}
\else
\fi
\end{abstract}
\ifHAL
\else
\maketitle 
\fi

\section{Introduction}

The propagation of acoustic and elastic waves plays an important role in the modeling of various physical phenomena in many applications, such as medical imaging and geophysical exploration. In many of these applications, the interaction between solid and fluid domains plays a central role. The main field of application that interests us in this paper is the propagation of waves through rocks, water and air leading to the coupling of elastic and acoustic waves in several media with contrasted material properties. Scenarios of interest range from relatively simple configurations, such as wave propagation in layered media, to complex cases involving heterogeneous domains with intricate geometries. Capturing the wave propagation as well as the dynamics of wave transmission and reflection at fluid-solid interfaces is essential to accurately predict the physical behavior of the phenomena under study. Additionally, employing discretization schemes with moderate numerical dispersion and dissipation is critical to the reliability of simulations. \cred{These challenges motivate the development of advanced numerical methods that can address the complexity of coupled wave problems in the time domain.}
 
Finite Differences (FD) are one of the most widely used methods \cred{for space discretization} due to their conceptual simplicity and computational efficiency. However, they have several limitations. In particular, FD methods meet with difficulties in complex geometries and are subject to geometrical errors owing to inaccuracies at interfaces \cite{VRC_2002}, although geometric mappings can enhance flexibility \cite{AP_2009}. Moreover, FD methods are also subject to numerical dispersion, which can be tempered by using high-order schemes. However, high-order FD schemes require large stencils which can hinder parallel scalability, and their explicit time discretization can be challenging \cite{PS_2017}. An alternative to high-order FD methods are high-order continuous finite elements (see \cite{COHEN_2003} for a review). These methods provide a natural way to handle non-planar complex geometries with interfaces. However, their efficiency is hampered by the presence of a global non-diagonal mass matrix, and the simulations can be polluted by spurious modes \cite{MARFURT_1984}. To make explicit time-stepping possible, Spectral Element Methods (SEM) were introduced. These methods align quadrature points with Lagrangian interpolation nodes that are Gauss--Lobatto--Legendre points, leading to a diagonal mass matrix (mass lumping), without loss in accuracy. The main drawback of SEM is to rely almost exclusively on quadrangular/hexahedral meshes with tensorization of quadrature nodes to be very efficient, making the discretization of complicated geometries quite challenging.

To obtain more geometric flexibility, discontinuous Galerkin (dG) methods were introduced both in first-order \cite{FR_1999, MR_2005} and second-order time formulations \cite{GSS_2006} of the wave equation (see \cite{ABM_2020} for the elasto-acoustic coupled problem). In general, in dG schemes for the \cred{first-order} formulation in time, stabilization acts as a  dissipative mechanism, whereas it is possible to identify a discrete energy that is conserved for the second-order time formulation. For an energy-conserving dG discretization of the first-order formulation, see \cite{CE_2006} where some continuity of the unknowns is enforced. The main drawback of dG methods is their computational cost since they involve much more degrees of freedom than continuous finite element methods. Hybridizable Discontinuous Galerkin (HDG) methods \cite{CGL_2009} introduce an additional unknown defined over the mesh skeleton and offer a reduction of the computational cost by the use of static condensation. Moreover, in a coupling wave context, HDG methods weakly enforce  the transmission conditions at the interface in a seamless way \cite{TVG_2017}. \cred{In comparison with the second-order formulation,  the first-order formulation allows to approximate the primal and dual variables simultaneously. Furthermore, the first-order formulation offers \cred{a wider choice of} high-order time-discrete schemes, such as Runge-Kutta schemes, whereas high-order approximation schemes of the second-order time derivative are less straightforward.}

Coupled elasto-acoustic wave propagation has also been adressed in the frequency domain. In particular, mixed finite element formulations have been proposed in \cite{GMM_2007} using a dual-mixed formulation in the solid region, and a standard primal formulation in the fluid region. In this context, the key challenge lies in coupling the solid stress tensor with the Helmholtz equation governing the fluid pressure. This is achieved by enforcing one of the transmission conditions weakly using a Lagrange multiplier.

The present work focuses on the coupling between elastic and acoustic wave equations in the time domain in the first-order formulation within the framework of hybrid high-order (HHO) methods for space semi-discretization. HHO methods make use of polynomials of arbitrary order ($k \geq 1$ for elasticity, $k\ge0$ for diffusion) attached to the mesh faces and polynomials of order $k' \in \{k, k+1\}$ attached to the mesh cells \cite{CEP_2021,DD_2020}. Initially developed for linear diffusion problems \cite{DEL_2014} and locking-free linear elasticity \cite{DE_2015}, HHO methods offer several advantages, including the natural handling of polyhedral and nonconforming meshes, local conservativity, and optimal convergence rates (order $(k+1)$ in the energy norm). Additionally, a static condensation procedure enables the local elimination of the cell unknowns, enhancing computational efficiency. HHO methods rely on two locally defined operators: a gradient reconstruction operator and a stabilization operator. The close connection between HHO, HDG and Weak Galerkin (WG) methods has been established in \cite{CDE_2016}. HHO methods have been extended to wave propagation in \cite{BDES_2021,BDE_2022} for both first- and second-order time formulations, see also \cite{ES_2024, SEJD_2023} for further developments on explicit time schemes for the second-order in time formulation of the wave equation. As HDG methods, a key advantage of HHO methods is their easy handling of coupling conditions through face-based degrees of freedom, which enables a natural and efficient treatment of interface conditions in multiphysics problems.

The present work brings several advances in the development and analysis of Hybrid High-Order (HHO) methods for coupled elasto-acoustic wave propagation. Our first contribution is an energy-error estimate in the space semi-discrete case. In particular, we leverage the fact that the coupling terms exhibit a skew-symmetric, and thus non-dissipative, structure. We improve on the analysis in \cite{BDES_2021} since we treat coupled elasto-acoustic wave problems, and we simplify the error estimate by exploiting tighter consistency properties of the method. Our second contribution is a spectral analysis of the resulting algebraic formulation, that reveals a behavior of the spectral radius of the stiffness matrix as $\min(\eta,1/\eta)$ with $\eta$ the scaling of the stabilization. Interestingly, this scaling is fairly independent of the geometry of the mesh cells (triangular, quadrangular or polygonal). Thus, explicit time schemes are recommended with $\cal{O}(1)$-stabilization, whereas implicit time schemes can be combined with either $\cal{O}(1)$- or $\cal{O}(\frac{1}{h})$-stabilization (see \hyperref[rem::stab_strategies]{\Cref{rem::stab_strategies}} for the scaling of the stabilization). The third contribution concerns optimal convergence rates for smooth solutions where we observe that $\cal{O}(\frac{1}{h})$-stabilization leads to improved rates in the energy norm (order $(k+2)$ instead of $(k+1)$). The last contribution is a more realistic study featuring interface, Rayleigh-type waves and complex transmission phenomena where we perform a comparison of HHO solutions with \cred{a reference solution obtained by a numerical computation using Green functions}. Finally, we notice that our discretization method differs from \cite{TVG_2017} since the primal variable in the fluid domain is the pressure here, whereas it is the fluid velocity in \cite{TVG_2017}. Moreover, we allow for implicit and explicit schemes as well as $\cal{O}(1)$- and $\cal{O}(\frac{1}{h})$-stabilizations, whereas \cite{TVG_2017} focuses on explicit time schemes and $\cal{O}(1)$-stabilization.

The paper is organized as follows. In \hyperref[sec::model_problem]{\Cref{sec::model_problem}}, we present the model problem for the elasto-acoustic coupling as well as its weak formulation. In \hyperref[sec::HHO_discretization]{\Cref{sec::HHO_discretization}}, we detail the HHO space semi-discretization. In \hyperref[sec::Error_analysis]{\Cref{sec::Error_analysis}}, we present the energy-error analysis in the time-continuous setting. In \hyperref[sec::algebraic_realization]{\Cref{sec::algebraic_realization}}, the algebraic realization of the space semi-discrete problem is discussed. Finally, numerical results are presented in \hyperref[sec::numerical_results]{\Cref{sec::numerical_results}}.

\section{Model problem}\label{sec::model_problem}

This section introduces the domain configuration, and the coupling of the acoustic and elastic wave equations.  We use boldface (resp. blackboard) fonts for vectors (resp. tensors), as well as for vector-valued  (resp. tensor-valued) fields and spaces composed of such fields. 

Let $J:= (0,T_{\rm{f}})$ be the time interval with the final time $T_{\rm{f}} > 0$, and  $\Omega$ be a polyhedral domain in $\bb{R}^d$, $d \in \{2,3\}$ (open, bounded, connected, Lipschitz subset of $\bb{R}^d$). We consider a partition of $\Omega$ such that $\ol{\Omega}:= \ol{\domain{s}} \cup \ol{\domain{f}}$ into two disjoint, open, polyhedral subdomains $\domain{s}$ and $\domain{f}$ constituting the elastic medium and the acoustic medium, respectively, sharing the polygonal interface $\G := \partial \domain{s} \cap \partial \domain{f}$. We fix the unit normal vector $\bd{n}_{\G}$ to $\G$ as conventionally pointing from $\domain{s}$ to $\domain{f}$.
\begin{figure}[H]
\centering
\begin{tikzpicture}[scale=0.65]
\begin{scope}[xshift=2cm]
\coordinate (A) at (-1,0);
\coordinate (B) at (3,0);
\coordinate (C) at (3,1);
\coordinate (D) at (3,3);
\coordinate (E) at (3,4);
\coordinate (F) at (-1,4);
\coordinate (G) at (-1,3);
\coordinate (H) at (-1,1);
\draw[line width = 3, color = ceared] (A) -- (B) -- (C) -- (D) -- (E) -- (F) -- (G) -- (H) -- cycle;
\coordinate (A2) at (3,0);
\coordinate (B2) at (7,0);
\coordinate (C2) at (7,1);
\coordinate (D2) at (7,3);
\coordinate (E2) at (7,4);
\coordinate (F2) at (3,4);
\coordinate (G2) at (3,3);
\coordinate (H2) at (3,1);
\draw[line width = 3, color = blue] (A2) -- (B2) -- (C2) -- (D2) -- (E2) -- (F2) -- (G2) -- (H2) -- cycle;
\draw[line width = 4, color = black] (3,4.09) -- (3,-0.09);
\draw[<-, line width=0.35mm] (2.9,4.35) -- (-1.35,4.35); 
\draw[-,  line width=0.35mm] (-1.35,4.35) -- (-1.35,-0.35);   
\draw[->, line width=0.35mm] (-1.35,-0.35) -- (2.9,-0.35);  
\draw[<-, line width=0.35mm] (3.1,4.35) -- (7.35,4.35); 
\draw[-,  line width=0.35mm] (7.35,4.35) -- (7.35,-0.35);   
\draw[->, line width=0.35mm] (7.35,-0.35) -- (3.1,-0.35);    
\draw[->, line width=0.35mm] (3,2.125) -- (4.75,2.125);
\node at (2.5,3.5) {\color{black} $\Gamma$};
\node at (5,2.75) {\color{black} $\bd{n}_\Gamma$};
\node at (-2.75,2) {\color{black} ${\partial \Omega^{\sc{s}} \backslash \Gamma}$};
\node at (1,0.9) {\color{ceared} \LARGE ${\Omega^\sc{s}}$};
\node at (8.75,2) {\color{black} ${\partial \Omega^\sc{f} \backslash \Gamma}$};
\node at (5,0.9) {\color{blue} \LARGE ${\Omega^\sc{f}}$};
\end{scope}
\end{tikzpicture}
\caption{Elastic domain $\Omega^{\sS}$, acoustic domain $\Omega^{\sF}$, and  unit normal $\bd{n}_\Gamma$ along the  interface $\Gamma$}
\label{fig:model}
\end{figure}
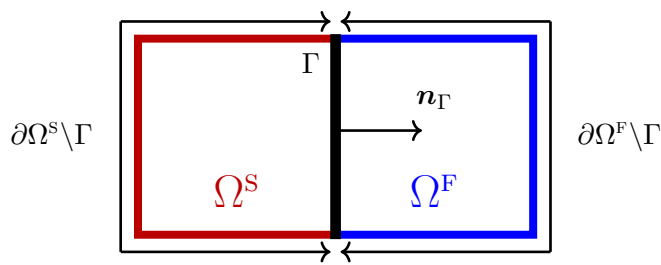

\subsection{Strong formulation}

\noindent \textbf{Acoustic wave equation.} The acoustic wave equation governs the scalar pressure field $ p \left[ \rm{Pa} \right]$ and the velocity field $\bd{m}\left[\frac{\rm{m}}{\rm{s}}\right]$ solving the following PDE system in $J \times \domain{f}$: 
\begin{subequations}\label{continuous_acoustic}
\begin{align}
\rho^\sc{f} \partial_t \bd{m} - \nabla p & = \bd{0},\label{continuous_acoustic_eq_1}\\
\frac{1}{\kappa}\partial_t p -  \nabla {\cdot} \bd{m} & = f^{\sc{f}},
\label{continuous_acoustic_eq_2}
\end{align}
\end{subequations}
\noindent with the fluid density  $\rho^\sc{f}\left[\frac{\rm{kg}}{\rm{m}^3} \right]$, the fluid bulk modulus $\kappa  \left[\rm{Pa}\right]$, and the source term $f^{\sc{f}} \left[\frac{1}{\rm{s}}\right]$. The celerity of the acoustic  waves is  $c_{\sc{p}}^\sc{f} := \sqrt{\kappa / \rho^\sc{f}} \left[\frac{\rm{m}}{\rm{s}}\right]$. We assign the initial conditions 
\begin{align}
p(0) = p_0 \quad\text{and}\quad\bd{m}{(0)}  = \bd{m}{_0},
\label{IC_BC_acoustic}
\end{align}
with given data $p_0$ and $\bd{m}{_0}$. For simplicity, we enforce homogeneous Dirichlet boundary conditions on $p$ on $\partial \domain{f} \backslash \G$.\\

\noindent \textbf{Elastic wave equation.} Let $\nabla_{\rm{sym}}:=\frac{1}{2}(\nabla+\nabla^{\cred{\top}})$ be the symmetric gradient operator.  The elastic wave equation governs the (linearized) Cauchy stress tensor $\bbm{s}\left[\rm{Pa}\right]$ and  the velocity field $\bd{v} \left[\frac{\rm{m}}{\rm{s}}\right]$ solving the following PDE system in $J \times \domain{s}$:
\begin{subequations}\label{conitnuous_elastic_eq}
\begin{align}
\bbm{C}^{-1} \partial_t \bbm{s} - \nabla_{\rm{sym}} \bd{v} & = \bd{0},\label{continuous_elastic_1} \\
\rho^\sc{s} \partial_t \bd{v} - \nabla {\cdot} \bbm{s} & = \bd{f}^{\sc{s}},
\label{continuous_elastic_2}
\end{align}
\end{subequations}
\noindent with the solid density $\rho^\sc{s} \left[\frac{\rm{kg}}{\rm{m}^3}\right]$ and the source term $\bd{f}^{\sc{s}} \left[\frac{\rm{Pa}}{\rm{m}}\right]$.  In the framework of isotropic elasticity, the 4$^{th}$-order Hooke tensor $\bbm{C}\left[\rm{Pa} \right]$ only depends on the Lamé parameters $\lambda \left[\rm{Pa}\right]$ and $\mu \left[\rm{Pa}\right]$, and is such that $\bbm{C}_{i j k l} := \lambda \delta_{i j} \delta_{k l} + \mu(\delta_{i k} \delta_{j l} + \delta_{i l} \delta_{j k})$, where the $\delta$'s are Kronecker symbols. In this setting, there are two wave speeds  $\left[\frac{\rm{m}}{\rm{s}}\right]$ related to two types of body waves, far from material interfaces:
\begin{subequations}
\label{cs}
\begin{alignat}{3}
c_{\sc{p}}^\sc{s}&:= \sqrt{\left(\lambda + 2 \mu\right) / \rho^\sc{s}} &\qquad &\text{for compressional (or P-) waves},\label{cs:a}\\
c_{\sc{s}}^\sc{s}&:=\sqrt{\mu / \rho^\sc{s}}& \qquad & \text{for shear (or S-) waves.}\label{cs:b}
\end{alignat}
\end{subequations}
We do not consider here the incompressible limit as $\frac{\lambda}{\mu} \gg 1$, so that both wave speeds in \eqref{cs} are of similar magnitude. We assign the initial conditions 
\begin{align}\label{IC_BC_elastic}
\bd{v}(0) = \bd{v}_{0}\quad\text{and}\quad \bbm{s}(0) = \bbm{s}_{0},
\end{align}
with given data $\bd{v}_{0}$ and $\bbm{s}_{0}$. For simplicity, we enforce homogeneous Dirichlet boundary conditions on $\bd{v}$ on $\partial \domain{s} \backslash \G$.\\

\noindent \textbf{Coupled problem.} The interface conditions on $J \times \G$ are
\begin{subequations}\label{continuous_coupling}
\begin{align}
\label{coupling1}
\bd{v} {\cdot} \bd{n}_\G & = \bd{m} {\cdot} \bd{n}_\G, \\
\bbm{s} {\cdot} \bd{n}_\G & = p~\bd{n}_\G,
\label{coupling2}
\end{align}
\end{subequations}
where the first equation is a kinematic condition and the second equation is a balance of forces per unit surface (namely, tractions) at the interface. 

\cred{We mention that \eqref{continuous_acoustic_eq_1}-\eqref{continuous_acoustic_eq_2} can be interpreted as the momentum balance and the (linearized) mass conservation equations in the fluid domain, and  \eqref{continuous_elastic_1}-\eqref{continuous_elastic_2} as the constitutive assumption and momentum balance equation in the solid domain. Finally, we notice that in \eqref{continuous_coupling}, only the normal component of the velocity is continuous at the interface.}

\subsection{Weak formulation}

We define the functional spaces 
\begin{subequations}
\begin{align}
H^1_{0\G}(\domain{f}) &:= \{p \in H^1(\domain{f}) : p|_{\partial \domain{f} \backslash \G} = 0\},\\
\bd{H}^1_{0\G} (\domain{s}) &:= \{\bd{v} \in \bd{H}^1(\domain{s}) : \bd{v}|_{\partial \domain{s} \backslash \G} = \bd{0}\},
\end{align}
\end{subequations}
taking into account the homogeneous Dirichlet boundary conditions. \cred{For weighted Lebesgue spaces, we use the notation $(u, v)_{L^2(\kappa; \Omega)} := \int_{\Omega} \kappa u v ~\mathrm{d}\Omega$ for all $u, v \in L^2(\Omega)$ and for a bounded and uniformly positive weight $\kappa$. A similar notation is used for inner products involving vector- and tensor-valued fields.}  Focusing for simplicity on a smooth solution in time \cred{which requires initial conditions $(\bd{m}_0,p_0)\in \bd{L}^2(\domain{f})\times H^1_{0\G}(\domain{f})$ and $(\bbm{s}_0,\bd{v}_0)\in \bbm{L}^2_{\rm{sym}}(\domain{s})\times \bd{H}^1_{0\G}(\domain{s}) $}, the coupled elasto-acoustic wave problem consists of finding $(\bd{m},p)\in C^1(\ol{J};\bd{L}^2(\domain{f}))\times (C^1(\ol{J};L^2(\domain{f}))\cap C^0(\ol{J};H^1_{0\G}(\domain{f})))$ and $(\bbm{s},\bd{v})\in C^1(\ol{J};\bbm{L}^2_{\rm{sym}}(\domain{s}))\times (C^1(\ol{J};\bd{L}^2(\domain{s}))\cap C^0(\ol{J};\bd{H}^1_{0\G}(\domain{s})))$ such that, for all $(\bd{r},q) \in \bd{L}^2(\domain{f}) \times H_{0\G}^1(\domain{f})$, all $(\bbm{b}, \bd{w}) \in \bbm{L}^2_{\rm{sym}}(\domain{s}) \times \bd{H}_{0\G}^1(\domain{s})$, and all $t \in \ol{J},$ 
\begin{subequations} \label{weak_form_acoustic_eq}
\begin{align}
(\partial_t \bd{m}(t), \bd{r})_{\bd{L}^2(\rho^\sc{f} ;\domain{f})} - (\nabla p(t), \bd{r})_{\bd{L}^2(\domain{f})} &= 0,\label{aweak1} \\
 (\partial_t p(t),q)_{L^2(\frac{1}{\kappa};\domain{f})} + (\bd{m}(t), \nabla q)_{\bd{L}^2(\domain{f})} + (\bd{v}(t) {\cdot} \bd{n}_{\G}, q)_{L^2(\G)} &= (f^{\sc{f}}(t), q)_{L^2(\domain{f})},\label{aweak2}
\end{align}
\end{subequations}
and
\begin{subequations}\label{weak_form_elastic_eq}
\begin{align}
\label{eweak1}
(\partial_t \bbm{s}(t), \bbm{b})_{\bbm{L}^2(\bbm{C}^{-1};\domain{s})} - (\nabla_{\rm{sym}}\bd{\bd{v}}(t), \bbm{b})_{\bbm{L}^2(\domain{s})} &= 0, \\
(\partial_t \bd{v}(t), \bd{w})_{\bd{L}^2(\rho^\sc{s};\domain{s})} + (\bbm{s}(t), \nabla_{\rm{sym}}\bd{w})_{\bbm{L}^2 (\domain{s})} - (p(t)  \bd{n}_{\G}, \bd{w})_{\bd{L}^2(\G)}  &= (\bd{f}^{\sc{s}}(t), \bd{w})_{\bd{L}^2(\domain{s})}.
\label{eweak2}
\end{align}
\end{subequations}
\noindent Notice that the coupling condition \eqref{coupling1} is enforced weakly in \eqref{aweak2}, and the coupling condition \eqref{coupling2} is enforced weakly in \eqref{eweak2}.

\subsection{Mechanical energy.} 

The total mechanical energy $\cal{E}(t) := \cal{E}^\sc{s}(t) + \cal{E}^\sc{f}(t)$ of a wave propagating through an elasto-acoustic medium is expressed as the sum of the mechanical energy in each medium involving the kinetic and the potential energy as follows:
\begin{equation*}
\cal{E}^\sc{f}(t) := \frac{1}{2} \|\bd{m}(t) \|^2_{\bd{L}^2(\rho^\sc{f};\domain{f})}+\frac{1}{2} \|p(t)\|^2_{L^2(\frac{1}{\kappa};\domain{f})} , \quad \cal{E}^\sc{s}(t) := \frac{1}{2} \|\bd{v}(t)\|^2_{\bd{L}^2(\rho^\sc{s};\domain{s})} + \frac{1}{2} \|\bbm{s}(t)\|^2_{\bbm{L}^2(\bbm{C}^{-1};\domain{s})}.
\end{equation*}
The following result is well-known, but we present it for completeness.
\begin{lemma}[Energy balance]\label{lem:energy-cons} 
The following energy balance holds: For all $t\in\ol{J}$,
\begin{equation}   
\cal{E}(t) = \cal{E}(0) + \int_0^t \left\{ (f^{\sc{f}}(\tau),p(\tau))_{L^2(\domain{f})} + (\bd{f}^{\sc{s}}(\tau),\bd{v}(\tau))_{\bd{L}^2(\domain{s})} \right\} \rm{d}\tau.
\end{equation}
\end{lemma}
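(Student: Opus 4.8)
The plan is to test each weak formulation against the exact solution itself and to exploit two cancellations: the antisymmetry between the ``gradient'' and ``divergence'' contributions inside each subdomain, and the skew-symmetry of the two interface terms. First I would take $\bd{r} = \bd{m}(t)$ in \eqref{aweak1} and $q = p(t)$ in \eqref{aweak2} and add the two identities. The contributions $-(\nabla p(t),\bd{m}(t))_{\bd{L}^2(\domain{f})}$ and $(\bd{m}(t),\nabla p(t))_{\bd{L}^2(\domain{f})}$ cancel, leaving
\[
(\partial_t\bd{m}(t),\bd{m}(t))_{\bd{L}^2(\rho^\sc{f};\domain{f})} + (\partial_t p(t),p(t))_{L^2(\frac{1}{\kappa};\domain{f})} + (\bd{v}(t){\cdot}\bd{n}_{\G},p(t))_{L^2(\G)} = (f^{\sc{f}}(t),p(t))_{L^2(\domain{f})}.
\]
Since the weights $\rho^\sc{f}$ and $\tfrac{1}{\kappa}$ are time-independent and the unknowns are $C^1$ in time, the first two terms equal $\tfrac12\tfrac{\rm{d}}{\rm{d}t}\|\bd{m}(t)\|^2_{\bd{L}^2(\rho^\sc{f};\domain{f})} + \tfrac12\tfrac{\rm{d}}{\rm{d}t}\|p(t)\|^2_{L^2(\frac{1}{\kappa};\domain{f})} = \tfrac{\rm{d}}{\rm{d}t}\cal{E}^\sc{f}(t)$.

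Next I would take $\bbm{b}=\bbm{s}(t)$ in \eqref{eweak1} and $\bd{w}=\bd{v}(t)$ in \eqref{eweak2} and add them; the terms $-(\nabla_{\rm{sym}}\bd{v}(t),\bbm{s}(t))_{\bbm{L}^2(\domain{s})}$ and $(\bbm{s}(t),\nabla_{\rm{sym}}\bd{v}(t))_{\bbm{L}^2(\domain{s})}$ cancel (and because $\bbm{s}(t)$ is symmetric, testing $\nabla_{\rm{sym}}\bd{v}(t)$ against $\nabla\bd{w}$ or $\nabla_{\rm{sym}}\bd{w}$ makes no difference, so nothing is lost), which yields $\tfrac{\rm{d}}{\rm{d}t}\cal{E}^\sc{s}(t) - (p(t)\,\bd{n}_{\G},\bd{v}(t))_{\bd{L}^2(\G)} = (\bd{f}^{\sc{s}}(t),\bd{v}(t))_{\bd{L}^2(\domain{s})}$, using the time-independence of $\bbm{C}^{-1}$ and the $C^1$ regularity as before. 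Adding the acoustic and the elastic identities, the two interface contributions cancel because $(p(t)\,\bd{n}_{\G},\bd{v}(t))_{\bd{L}^2(\G)} = (p(t),\bd{v}(t){\cdot}\bd{n}_{\G})_{L^2(\G)}$ — this is precisely the non-dissipative, skew-symmetric structure of the coupling. Hence $\tfrac{\rm{d}}{\rm{d}t}\cal{E}(t) = (f^{\sc{f}}(t),p(t))_{L^2(\domain{f})} + (\bd{f}^{\sc{s}}(t),\bd{v}(t))_{\bd{L}^2(\domain{s})}$, and integrating over $(0,t)$ gives the asserted balance.

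There is no genuine obstacle here; the only points that deserve a word are (i) the justification of $\tfrac{\rm{d}}{\rm{d}t}\|\bd{m}(t)\|^2 = 2(\partial_t\bd{m}(t),\bd{m}(t))$ and its analogues, which follows from the assumed $C^1(\ol{J};\cdot)$ regularity of each field together with the boundedness and time-independence of the weights $\rho^\sc{f}$, $\rho^\sc{s}$, $\tfrac{1}{\kappa}$, $\bbm{C}^{-1}$, and (ii) the fact that the interface pairings $(\bd{v}(t){\cdot}\bd{n}_{\G},p(t))_{L^2(\G)}$ are well defined for every $t\in\ol{J}$, which is ensured by $\bd{v}(t)\in\bd{H}^1_{0\G}(\domain{s})$ and $p(t)\in H^1_{0\G}(\domain{f})$ through the trace theorem, so that the interface terms appearing above are legitimate members of the weak formulation.
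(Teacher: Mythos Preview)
Your proposal is correct and follows essentially the same route as the paper's proof: test each equation with the corresponding solution component, use the antisymmetry of the gradient/divergence pairings within each subdomain to recover $\tfrac{\rm{d}}{\rm{d}t}\cal{E}^\sc{f}(t)$ and $\tfrac{\rm{d}}{\rm{d}t}\cal{E}^\sc{s}(t)$, observe that the two interface contributions cancel, and integrate in time. Your additional remarks on the $C^1$ regularity and the trace theorem are welcome but not strictly needed beyond what the paper already assumes.
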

\begin{proof}
Testing  \eqref{weak_form_acoustic_eq} with $(\bd{m}(t),p(t))$ and \eqref{weak_form_elastic_eq} with $(\bbm{s}(t),\bd{v}(t))$ gives
\begin{subequations}
\begin{align}
(\partial_t \bd{m}(t), \bd{m}(t))_{\bd{L}^2(\rho^\sc{f};\domain{f})} - (\nabla p(t), \bd{m}(t))_{\bd{L}^2(\domain{f})} &= 0,\label{ec1}\\
(\partial_t p(t), p(t))_{L^2(\frac{1}{\kappa};\domain{f})} + (\bd{m}(t), \nabla p(t))_{\bd{L}^2(\domain{f})} + (\bd{v}(t) {\cdot} \bd{n}_\G, p(t))_{L^2(\Gamma)} &= (f^{\sc{f}}(t), p(t))_{L^2(\domain{f})},\label{ec2}
\end{align}
\end{subequations}
and
\begin{subequations}
\begin{align}
 (\partial_t\bbm{s}(t),\bbm{s}(t))_{\bbm{L}^2(\bbm{C}^{-1};\domain{s})} - (\nabla_{\rm{sym}} \bd{v}(t), \bbm{s}(t))_{\bbm{L}^2(\domain{s})} &= \bd{0},\label{ec3} \\
 (\partial_t \bd{v}(t),\bd{v}(t))_{\bd{L}^2(\rho^\sc{s};\domain{s})} + (\bbm{s}(t), \nabla_{\rm{sym}} \bd{v}(t))_{\bbm{L}^2(\domain{s})} - (p(t) \bd{n}_\G, \bd{v}(t))_{\bd{L}^2(\Gamma)}&= (\bd{f}^{s}(t),\bd{v}(t))_{\bd{L}^2(\domain{s})}.\label{ec4}
\end{align}
\end{subequations}
Summing \eqref{ec1}-\eqref{ec2} and \eqref{ec3}-\eqref{ec4}, we get
\begin{align*}
\frac{\mathrm{d}}{\mathrm{dt}} \mathcal{E}^\textsc{f}(t) & = (f^{\sc{f}}(t),p(t))_{L^2(\domain{f})} - (\bd{v}(t) {\cdot} \bd{n}_\G, p(t))_{L^2(\Gamma)},\\
\frac{\mathrm{d}}{\mathrm{dt}} \mathcal{E}^\textsc{s}(t) & = (\bd{f}^{\sc{s}}(t),\bd{v}(t))_{\bd{L}^2(\domain{s})} + (p(t) \bd{n}_\G, \bd{v}(t))_{\bd{L}^2(\Gamma)}.
\end{align*}
Summing the two equations and integrating over $(0,t)$ for all $t \in \ol{J}$ proves the claim.
\end{proof}

\section{HHO space semi-discretization}\label{sec::HHO_discretization}

This section presents the key ingredients of the HHO discretization, namely the discrete spaces and the discrete operators leading to the space semi-discrete HHO formulation. 

\subsection{Meshing and discrete spaces}

\noindent\textbf{Admissible mesh.} Let $\cal{T}$ be a polyhedral mesh of $\Omega$ that fits the partition of $\Omega$ into $\domain{s}$ and $\domain{f}$. For simplicity, we assume that all the material properties are piecewise constant on $\cal{T}$. We define  the two sub-meshes $\cal{T}^{\sc{s}}$ and $\cal{T}^{\sc{f}}$ which cover exactly $\domain{s}$ and $\domain{f}$, respectively.  The mesh faces are collected in the set $\cal{F}$ which is split into $\cal{F} := \cal{F}^{\circ} \cup \cal{F}^{\sd}$, where $\cal{F}^{\circ}$ collects all the mesh interfaces (inside $\Omega$, including on $\G$) and $\cal{F}^{\sd}$ collects all the mesh boundary faces on $\partial\Omega$. With obvious notation, we further decompose  $\cal{F}^{\circ} := \cal{F}^{\circ \sc{f}} \cup \cal{F}^{\circ \sc{s}} \cup \cal{F}^{\sg}$ and $\cal{F}^{\sd}:= \cal{F}^{\sd \sc{f}} \cup \cal{F}^{\sd \sc{s}}$. Later on, we also use the notation $\cal{M}^\sF:=(\T^\sF,\cF^\sF)$ and $\cal{M}^\sS:=(\T^\sS,\cF^\sS)$ where $\cF^\sF:=\cF^{\circ\sF}\cup \cF^{\sd\sc{f}}\cup \cF^{\sg}$ and $\cF^\sS:=\cF^{\circ\sS}\cup \cF^{\sd\sc{s}}\cup\cF^{\sg}$.  A generic mesh cell is denoted  $T \in \cal{T}$, its diameter $h_T$, its unit outward normal  $\bd{n}_T$, and the faces composing the boundary of $T$ are collected in the subset $\cal{F}_{\partial T}\subset \cF$.  We also set $\tih_T:=\frac{h_T}{\ell_\Omega}$,  where the scaling by $\ell_\Omega:=\text{diam}(\Omega)$ is introduced for dimensional consistency.\\
 
\noindent\textbf{Approximation spaces.} In each subdomain, we consider a mixed formulation with one primal variable ($p$ and $\bd{v}$) and one dual variable ($\bd{m}$ and $\bbm{s}$). The idea is to discretize the primal variables using the HHO method and the dual variables using a classical dG approach. Let $k \geq 1$ be the polynomial degree. The dG variables are piecewise polynomials of order $k$, whereas the HHO variables are composed of a pair with one cell component and one face component. The cell component is a piecewise polynomial of order $k^\prime\in \{k, k+1\}$ and the face component is a piecewise polynomial of order $k \geq 1$. The HHO discretization is said to be of equal-order if $k^\prime = k$ and of mixed-order if $k^\prime=k+1$.
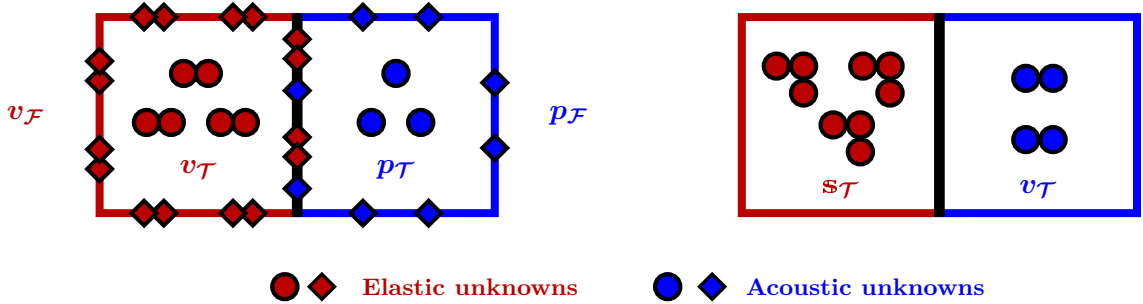
\begin{figure}[H]
\centering 
\begin{tikzpicture}[scale=0.65]
\begin{scope}[xshift=2cm]
\coordinate (A) at (-1,0);
\coordinate (B) at (3,0);
\coordinate (C) at (3,1);
\coordinate (D) at (3,3);
\coordinate (E) at (3,4);
\coordinate (F) at (-1,4);
\coordinate (G) at (-1,3);
\coordinate (H) at (-1,1);
\draw[line width = 3, color = ceared] (A) -- (B) -- (C) -- (D) -- (E) -- (F) -- (G) -- (H) -- cycle;
\coordinate (A2) at (3,0);
\coordinate (B2) at (7,0);
\coordinate (C2) at (7,1);
\coordinate (D2) at (7,3);
\coordinate (E2) at (7,4);
\coordinate (F2) at (3,4);
\coordinate (G2) at (3,3);
\coordinate (H2) at (3,1);
\draw[line width = 3, color = blue] (A2) -- (B2) -- (C2) -- (D2) -- (E2) -- (F2) -- (G2) -- (H2) -- cycle;
\draw[line width = 4, color = black] (3,4.075) -- (3,-0.075);
\filldraw[black, line width=1.5pt] ( 0.3, 0)  node[shape=diamond, draw, fill=ceared, inner sep=0pt, minimum size=10pt] {};
\filldraw[black, line width=1.5pt] (-0.1, 0)  node[shape=diamond, draw, fill=ceared, inner sep=0pt, minimum size=10pt] {};
\filldraw[black, line width=1.5pt] ( 2.1, 0)  node[shape=diamond, draw, fill=ceared, inner sep=0pt, minimum size=10pt] {};
\filldraw[black, line width=1.5pt] ( 1.7, 0)  node[shape=diamond, draw, fill=ceared, inner sep=0pt, minimum size=10pt] {};
\filldraw[black, line width=1.5pt] ( 0.3, 4)  node[shape=diamond, draw, fill=ceared, inner sep=0pt, minimum size=10pt] {};
\filldraw[black, line width=1.5pt] (-0.1, 4)  node[shape=diamond, draw, fill=ceared, inner sep=0pt, minimum size=10pt] {};
\filldraw[black, line width=1.5pt] ( 2.1, 4)  node[shape=diamond, draw, fill=ceared, inner sep=0pt, minimum size=10pt] {};
\filldraw[black, line width=1.5pt] ( 1.7, 4)  node[shape=diamond, draw, fill=ceared, inner sep=0pt, minimum size=10pt] {};
\filldraw[black, line width=1.5pt] ( 3, 1.55) node[shape=diamond, draw, fill=ceared, inner sep=0pt, minimum size=10pt] {};        
\filldraw[black, line width=1.5pt] ( 3, 1.15) node[shape=diamond, draw, fill=ceared, inner sep=0pt, minimum size=10pt] {};  
\filldraw[black, line width=1.5pt] ( 3, 0.5)  node[shape=diamond, draw, fill=blue, inner sep=0pt, minimum size=10pt] {};
\filldraw[black, line width=1.5pt] ( 3, 3.55) node[shape=diamond, draw, fill=ceared, inner sep=0pt, minimum size=10pt] {};        
\filldraw[black, line width=1.5pt] ( 3, 3.15) node[shape=diamond, draw, fill=ceared, inner sep=0pt, minimum size=10pt] {};  
\filldraw[black, line width=1.5pt] ( 3, 2.5)  node[shape=diamond, draw, fill=blue, inner sep=0pt, minimum size=10pt] {};
\filldraw[black, line width=1.5pt] (-1, 0.9)  node[shape=diamond, draw, fill=ceared, inner sep=0pt, minimum size=10pt] {};
\filldraw[black, line width=1.5pt] (-1, 1.3)  node[shape=diamond, draw, fill=ceared, inner sep=0pt, minimum size=10pt] {};   
\filldraw[black, line width=1.5pt] (-1, 2.7)  node[shape=diamond, draw, fill=ceared, inner sep=0pt, minimum size=10pt] {};
\filldraw[black, line width=1.5pt] (-1, 3.1)  node[shape=diamond, draw, fill=ceared, inner sep=0pt, minimum size=10pt] {};        
\filldraw[black, line width=1.5pt] ( 4.33, 0) node[shape=diamond, draw, fill=blue, inner sep=0pt, minimum size=10pt] {};
\filldraw[black, line width=1.5pt] ( 5.66, 0) node[shape=diamond, draw, fill=blue, inner sep=0pt, minimum size=10pt] {};
\filldraw[black, line width=1.5pt] ( 4.33, 4) node[shape=diamond, draw, fill=blue, inner sep=0pt, minimum size=10pt] {};
\filldraw[black, line width=1.5pt] ( 5.66, 4) node[shape=diamond, draw, fill=blue, inner sep=0pt, minimum size=10pt] {};
\filldraw[black, line width=1.5pt] ( 7, 1.33) node[shape=diamond, draw, fill=blue, inner sep=0pt, minimum size=10pt] {};
\filldraw[black, line width=1.5pt] ( 7, 2.66) node[shape=diamond, draw, fill=blue, inner sep=0pt, minimum size=10pt] {};
\draw[black, line width=4.5pt] (1.2,2.85) circle (5pt);
\fill[ceared] (1.2,2.85) circle (6pt);
\draw[black, line width=4.5pt] (0.7,2.85) circle (5pt);
\fill[ceared] (0.7,2.85) circle (6pt);
\draw[black, line width=4.5pt] (1.95,1.85) circle (5pt);
\fill[ceared] (1.95,1.85) circle (6pt);
\draw[black, line width=4.5pt] (1.45,1.85) circle (5pt);
\fill[ceared] (1.45,1.85) circle (6pt);
\draw[black, line width=4.5pt] (0.45,1.85) circle (5pt);
\fill[ceared] (0.45,1.85) circle (6pt);
\draw[black, line width=4.5pt] (-0.05,1.85) circle (5pt);
\fill[ceared] (-0.05,1.85) circle (6pt);
\draw[black, line width=4.5pt] (5,2.85) circle (5pt);
\fill[blue] (5,2.85) circle (6pt);
\draw[black, line width=4.5pt] (4.5,1.85) circle (5pt);
\fill[blue] (4.5,1.85) circle (6pt);
\draw[black, line width=4.5pt] (5.5,1.85)  circle (5pt);
\fill[blue] (5.5,1.85)  circle (6pt);
\node at (-2.5,2) {\color{ceared} $\bd{v_\cal{F}}$};
\node at (1,0.9) {\color{ceared} $\bd{v_\cal{T}}$};
\node at (8.5,2) {\color{blue} $\bd{p_\cal{F}}$};
\node at (5,0.9) {\color{blue} $\bd{p_\cal{T}}$};
\begin{scope}[xshift=13cm]
\coordinate (A) at (-1,0);
\coordinate (B) at (3,0);
\coordinate (C) at (3,1);
\coordinate (D) at (3,3);
\coordinate (E) at (3,4);
\coordinate (F) at (-1,4);
\coordinate (G) at (-1,3);
\coordinate (H) at (-1,1);
\draw[line width = 3, color = ceared] (A) -- (B) -- (C) -- (D) -- (E) -- (F) -- (G) -- (H) -- cycle;
\coordinate (A2) at (3,0);
\coordinate (B2) at (7,0);
\coordinate (C2) at (7,1);
\coordinate (D2) at (7,3);
\coordinate (E2) at (7,4);
\coordinate (F2) at (3,4);
\coordinate (G2) at (3,3);
\coordinate (H2) at (3,1);
\draw[line width = 3, color = blue] (A2) -- (B2) -- (C2) -- (D2) -- (E2) -- (F2) -- (G2) -- (H2) -- cycle;
\draw[line width = 4, color = black] (3,4.075) -- (3,-0.075);
\draw[black, line width=4.5pt] (0.25, 3.0)  circle (5pt); \fill[ceared] (0.25, 3.0)  circle (6pt);
\draw[black, line width=4.5pt] (-0.3, 3.0)  circle (5pt); \fill[ceared] (-0.3, 3.0)  circle (6pt);
\draw[black, line width=4.5pt] (0.25, 2.45) circle (5pt); \fill[ceared] (0.25, 2.45) circle (6pt);
\draw[black, line width=4.5pt] (2.0,  3.0)  circle (5pt); \fill[ceared] (2.0,  3.0) circle (6pt);
\draw[black, line width=4.5pt] (1.45, 3.0)  circle (5pt); \fill[ceared] (1.45, 3.0) circle (6pt);
\draw[black, line width=4.5pt] (2.0,  2.45) circle (5pt); \fill[ceared] (2.0,  2.45) circle (6pt);
\draw[black, line width=4.5pt] (1.4, 1.8) circle (5pt);  \fill[ceared] (1.4, 1.8) circle (6pt);
\draw[black, line width=4.5pt] (0.85,  1.8) circle (5pt);  \fill[ceared] (0.85,  1.8) circle (6pt);
\draw[black, line width=4.5pt] (1.4, 1.25) circle (5pt); \fill[ceared] (1.4, 1.25) circle (6pt);
\draw[black, line width=4.5pt] (4.75-0.75, 1.5) circle (5pt); \fill[blue]  (4.75-0.75, 1.5) circle (6pt);
\draw[black, line width=4.5pt] (5.3 -0.75,  1.5) circle (5pt); \fill[blue]  (5.3-0.75,  1.5) circle (6pt);
\draw[black, line width=4.5pt] (4.75+0.75, 1.5) circle (5pt); \fill[blue]  (4.75+0.75, 1.5) circle (6pt);
\draw[black, line width=4.5pt] (5.3 +0.75,  1.5) circle (5pt); \fill[blue]  (5.3+0.75,  1.5) circle (6pt);
\draw[black, line width=4.5pt] (4.75, 2.75) circle (5pt); \fill[blue]  (4.75, 2.75) circle (6pt);
\draw[black, line width=4.5pt] (5.3,  2.75) circle (5pt); \fill[blue]  (5.3,  2.75) circle (6pt);
\node at (1,0.5) {\color{ceared} $\bd{\bbm{s}_{\cal{T}}}$};
\node at (5,0.5) {\color{blue} $\bd{m_\cal{T}}$};
\end{scope}
\end{scope}
\begin{scope}[xshift=-2.5cm]
\begin{scope}[yshift=-2.25cm]
\draw[black, line width = 4.5] (7.25,0.75) circle (5pt);
\fill[ceared] (7.25,0.75) circle (6pt);
\filldraw[black, line width=1.25pt] (8,0.75) node[shape=diamond, draw, fill=ceared, inner sep=0pt, minimum size=10pt] {};
\node at (11,0.75) {$\footnotesize \color{ceared} \textbf{\color{ceared} Elastic unknowns}$};
\draw[black, line width = 4.5] (15,0.75) circle (5pt);
\fill[blue] (15,0.75) circle (6pt);
\filldraw[black, line width=1.25pt] (15.9,0.75) node[shape=diamond, draw, fill=blue, inner sep=0pt, minimum size=10pt] {};
\node at (19,0.75) {$\footnotesize \color{blue} \textbf{\color{blue} Acoustic unknowns}$};    
\end{scope}
\end{scope}
\end{tikzpicture}
\caption{Elasto-acoustic unknowns with a equal-order discretization $(k^\prime = k = 1$). \textbf{Left panel}: Primal variables discretized using HHO. \textbf{Right panel}: Dual variables discretized using dG.}
\label{coupling_unknowns}
\end{figure}
Let $\ell\geq 0$. We introduce the local polynomial spaces $\bbm{P}_{\rm{sym}}^\ell(T)$, $\bd{P}^\ell(T)$ and  $P^\ell(T)$ (resp. $\bd{P}^\ell(F)$ and  $P^\ell(F)$) as the restrictions to $T$ (resp. $F$) of symmetric tensor-, vector- and scalar-valued $d$-variate polynomials of degree at most $\ell$ (resp. $(d-1)$-variate polynomials of degree at most $\ell$).\\

\noindent \textbf{Acoustic wave equation.} The discrete dG and HHO spaces are
\begin{equation}
\bd{M}^{k}(\Tf) := \underset{T \in \Tf}{\bigtimes} \bd{P}^k(T), \qquad {\widehat{P}}^{k}(\Mf) := {P}^{k^\prime}(\Tf) \times {P}^{k}(\Ff),
\end{equation}
where $P^{k^\prime}(\Tf) := \underset{T \in \Tf}{\bigtimes} P^{k^\prime}(T)$ and $P^{k}(\Ff) := \underset{F \in \Ff}{\bigtimes} P^{k}(F)$. The global and local generic elements in $\widehat{P}^{k}(\Mf)$  are denoted  by
\begin{equation}
\hat{p}_{\Mf} := (p_{\Tf}, p_{\Ff})\in\widehat{P}^{k}(\Mf), \quad \hat{p}_T := (p_T, p_{\partial T}) \in \widehat{P}^{k}_T := P^{k^\prime}(T) \times P^{k}(\cF_{\partial T})\quad\forall T \in\Tf,
\end{equation}
where $p_{\partial T} := (p_F)_{F \in \cal{F}_{\partial T}}$ and $P^{k}(\cF_{\partial T}) := \underset{F \in \cal{F}_{\partial T}}{\bigtimes} P^{k}(F)$. Moreover, to enforce the homogeneous Dirichlet boundary condition on $p$, we consider the subspace 
\begin{equation}
{\widehat{P}}^{k}_0(\Mf) := \left\{\hat{p}_{\Mf} \in \widehat{P}^{k}(\Mf) \mid p_F = 0, ~ \forall F \in \cal{F}^{\sd\sc{f}} \right\}.
\end{equation}
For all $t\in\ol{J}$, we approximate the pressure $p(t)$ by the HHO unknown $\hat{p}_{\Mf}(t) \in \widehat{P}^{k}_0(\Mf)$ and the velocity  $\bd{m}(t)$ by the dG unknown $\bd{m}_{\Tf}(t) \in \bd{M}^{k}(\Tf)$.\\

\noindent \textbf{Elastic wave equation.} The discrete dG and HHO spaces are
\begin{equation}
\Usigmas := \underset{T \in \cal{T}^{\sc{s}}_h}{\bigtimes} \bbm{P}_{\rm{sym}}^k(T), \qquad \bd{\widehat{V}}{}^{k}(\Ms) := \bd{{V}}^{k^\prime}(\Ts) \times \bd{{V}}^{k}(\Fs), 
\end{equation}
where $\bd{{V}}^{k^\prime}(\Ts) := \underset{T \in \Ts}{\bigtimes} \bd{P}^{k^{\prime}}(T)$ and $\bd{{V}}^{k}(\Fs) := \underset{F \in \Fs}{\bigtimes} \bd{P}^k(F)$. The global and local generic elements in $\widehat{\bd{V}}{}^{k}(\Ms)$ are denoted by  
\begin{equation}
\hat{\bd{v}}_{\Ms} := (\bd{v}_{\Ts}, \bd{v}_{\Fs})\in \bd{\widehat{V}}{}^{k}(\Ms), \quad \hat{\bd{v}}_T := (\bd{v}_T, \bd{v}_{\partial T}) \in \bd{\widehat{V}}{}^{k}_{T} := \bd{P}^{k^{\prime}}(T) \times \bd{P}^k(\cF_{\partial T})\quad \forall T \in \Ts,
\end{equation}
where $ \bd{v}_{\partial T} := (\bd{v}_F)_{F \in \cal{F}_{\partial T}}$ and $\bd{P}^{k}(\cF_{\partial T}) := \underset{F \in \cal{F}_{\partial T}}{\bigtimes} \bd{P}^{k}(F)$. Moreover, to enforce the homogeneous Dirichlet boundary condition on $\bd{v}$, we consider the subspace 
\begin{equation}
\bd{\widehat{V}}{}^{k}_0(\Ms) := \left\{\hat{\bd{v}}_{\Ms} \in \bd{\widehat{V}}{}^{k}(\Ms) \mid \bd{v}_F =\bd{0}, ~ \forall F \in \cal{F}^{\sd\sc{s}} \right\}.
\end{equation}
For all $t\in\ol{J}$, we approximate the velocity $\bd{v}(t)$ by the HHO unknown $\hat{\bd{v}}_{\Ms}(t) \in \bd{\widehat{V}}{}^{k}_0(\Ms)$ and the stress tensor $\bbm{s}(t)$ by the dG unknown $\bbm{s}_{\Ts}(t) \in \Usigmas$.\\

\noindent \textbf{$\bd{L}^2$-orthogonal projections.} Let $\Pi_T^{k^{\prime}}$ (resp., $\Pi_F^k$ and  $\Pi_{\partial T}^k$ ) be the local $L^2(T)$- (resp., $L^2(F)$- and $L^2(\partial T)$-) orthogonal projection onto $P^{k^\prime}(T)$ (resp., $P^k(F)$ and $P^{k}(\cF_{\partial T})$). Let $\Pi_{\T^\bullet}^{k^{\prime}}$ (resp. $\Pi_{\cF^\bullet}^k$) be the global $L^2$-orthogonal projections onto $P^{k^\prime}(\T^\bullet)$ (resp. $P^k(\cF^\bullet)$) with $\bullet\in\{\sc{f},\sc{s}\}$. A similar notation is used for vector- and tensor-valued fields. 

\subsection{HHO local operators}

\noindent The HHO discretization is formulated locally using the following two key operators:
\begin{enumerate}[label=\roman*)]
\item a local gradient reconstruction operator for the acoustic wave equation and a local symmetric gradient reconstruction operator for the elastic wave equation; 
\item a stabilization operator that penalizes the difference between the trace of the cell unknowns and the face unknowns for the HHO components on both subdomains. 
\end{enumerate}

\noindent The discrete problem is then assembled by summing the contributions of all the mesh cells.\\

\noindent \textbf{Acoustic wave equation.} We define the local gradient reconstruction operator $\bd{g}_T: {\widehat{P}}^{k}_{T} \rightarrow \bd{P}^k(T)$ such that, for all $\hat{p}_T \in {\widehat{P}}^{k}_{T}$, 
\begin{equation}
(\bd{g}_T(\hat{p}_T), \bd{r})_{\bd{L}^2(T)} = (\nabla p_T, \bd{r})_{\bd{L}^2(T)} - (p_{T} - p_{\partial T}, \bd{r} {\cdot} \bd{n}_T)_{L^2(\partial T)}, \qquad \forall \bd{r} \in \bd{P}^k(T).\label{def:G}
\end{equation}
Notice that $\bd{g}_T(\hat{p}_T)$ can be evaluated componentwise by inverting the mass matrix associated with a basis of the scalar-valued polynomial space $P^k(T)$. 

We define the local stabilization operator $S_{\dt}: {\widehat{P}}^{k}_{T} \rightarrow P^k(\cF_{\dt})$  such that, for all $\hat{p}_T \in \widehat{P}^{k}_{T}$, 
\begin{equation}
S_{\partial T}(\hat{p}_T) := \Pi_{\partial T}^k (\delta_{\partial T}(\hat{p}_T)) \quad \text{with} \quad \delta_{\partial T}(\hat{p}_T) := p_{T}|_{ \partial T} - p_{\partial T}.\label{stab-op-F}
\end{equation}
We define the global gradient reconstruction operator $\bd{g}_{\Tf} : \widehat{P}^{k}(\Mf) \rightarrow \bd{M}^{k}(\Tf)$ as $\bd{g}_{\Tf}(\hat{p}_{\cred{\Mf}})|_{T} := \bd{g}_T(\hat{p}_T)$ for all $T \in \cal{T}^{\sc{f}}$ and all $\hat{p}_{\Mf} \in \widehat{P}^{k}(\Mf)$, and the global stabilization bilinear form $s_{\Mf}$ on $\widehat{P}^{k}(\Mf) \times \widehat{P}^{k}(\Mf)$ as
\begin{equation}
s_{\Mf}(\hat{p}_{\Mf}, \hat{q}_{\Mf}) := \sum_{T \in \Tf} \tau^{\sc{f}}_{T} (S_{\partial T}(\hat{p}_T), S_{\partial T}(\hat{q}_T))_{L^2(\partial T)}, \qquad \forall \hat{p}_{\Mf}, \hat{q}_{\Mf} \in \widehat{P}^{k}(\Mf),\label{stab-F}
\end{equation}
\noindent where, for all $T \in \Tf$, the stabilization parameter $\tau^\sc{f}_{T}>0$ is taken equal to (see also \hyperref[rem::stab_strategies]{\Cref{rem::stab_strategies}} below)
\begin{align}
\tau^\sc{f}_{T} := \cred{\zeta}^\sc{f} \tih_T^{-\alpha}\quad\text{with} \quad \cred{\zeta}^\sc{f} := (\rho^\sc{f}c_\sc{p}^\sc{f})^{-1} = c_\sc{p}^\sc{f}\kappa^{-1},\;\alpha\in\{0,1\}.\label{stab-param-F}
\end{align} 
\noindent \textbf{Elastic wave equation.}
We define the local symmetric gradient reconstruction operator $\bbm{g}_T^{\rm{sym}}: \bd{\widehat{V}}{}^{k}_{T} \rightarrow \bbm{P}_{\rm{sym}}^k(T)$ such that, for all $\hat{\bd{v}}_T \in \bd{\widehat{V}}{}^{k}_{T}$, 
\begin{equation}
(\bbm{g}_T^{\rm{sym}}(\hat{\bd{v}}_T), \bbm{b})_{\bbm{L}^2(T)} = (\nabla_{\rm{sym}} \bd{v}_T, \bbm{b})_{\bbm{L}^2(T)} - (\bd{v}_T - \bd{v}_{\partial T}, \bbm{b} {\cdot} \bd{n}_T)_{\bd{L}^2(\partial T)},\qquad\forall  \bbm{b} \in \bbm{P}_{\rm{sym}}^k(T).\label{g-elastic}
\end{equation}
Notice that $\bbm{g}_T^{\rm{sym}}(\cred{\hat{\bd{v}}_T})$ can be evaluated componentwise by inverting the mass matrix associated with a basis of the scalar-valued polynomial space $P^k(T)$. 

We define the local stabilization operator $\bd{S}_{\partial T}:\bd{\widehat{V}}{}^{k}_{T} \rightarrow \bd{P}^k(\cF_{\dt})$ such that,  for all $\hat{\bd{v}}_T \in \bd{\widehat{V}}{}^{k}_{T}$,
\begin{equation}
\bd{S}_{\partial T}(\hat{\bd{v}}_T):=\bd{\Pi}_{\partial T}^k(\bd{\delta}_{\partial T} (\hat{\bd{v}}_T))\quad
\text{with}\quad \bd{\delta}_{\partial T}(\hat{\bd{v}}_T) := \bd{v}_{T} |_{\partial T} - \bd{v}_{\partial T}.\label{stab-op-S}
\end{equation}
We define the global symmetric gradient reconstruction operator $\bbm{g}_{\Ts}^{\rm{sym}} : \bd{\widehat{V}}{}^{k}(\Ms) \rightarrow \bbm{S}^{k}_{\rm{sym}}(\Ts)$ as $\bbm{g}_{\Ts}^{\rm{sym}} (\hat{\bd{v}}_{\Ms})|_T := \bbm{g}_T^{\rm{sym}}(\hat{\bd{v}}_{\cred{T}})$ for all $T \in \cal{T}^\sc{s}$ and all $\hat{\bd{v}}_{\Ms} \in \bd{\widehat{V}}{}^{k}(\Ms)$, and the global stabilization bilinear form $s_{\Ms}$ on $\bd{\widehat{V}}{}^{k}(\Ms) \times \bd{\widehat{V}}{}^{k}(\Ms)$ as
\begin{equation}
s_{\Ms}(\hat{\bd{v}}_{\Ms} , \hat{\bd{w}}_{\Ms}) := \sum_{T \in \Ts} \tau^\sc{s}_{T}(\bd{S}_{\partial T} (\hat{\bd{v}}_T), \bd{S}_{\partial T} (\hat{\bd{w}}_T))_{\bd{L}^2(\partial T)},\qquad\forall \hat{\bd{v}}_{\Ms}, \hat{\bd{w}}_{\Ms} \in \bd{\widehat{V}}{}^{k}(\Ms),\label{stab-S}
\end{equation}
where, for all $T \in \Ts$, the stabilization parameter $\tau^\sc{s}_{T}>0$ is taken equal to 
\begin{align}
\tau^\sc{s}_{T} = \cred{\zeta}^\sc{s} \tih_T^{-\alpha} \quad\text{with}\quad \cred{\zeta}^\sc{s}:=\rho^\sc{s}c^\sc{s},\quad\alpha\in\{0,1\},\label{stab-param-S}
\end{align}
and $c^\sc{s}$ can be any of the two wave speeds defined in \eqref{cs}. 
\begin{remark}[Stabilization parameter] \label{rem::stab_strategies}
Notice from \eqref{stab-op-F} and \eqref{stab-op-S} that the stabilization operators considered in the paper are the same regardless of the discretization setting (equal- or mixed-order) and correspond to plain least-squares stabilization in the equal-order setting. This choice is standard for $\mathcal{O}(1)$-stabilization ($\alpha=0$) in the first-order formulation of wave problems discretized using dG methods and was also considered recently in \cite{EK_2024} in the context of HHO methods. However, $\mathcal{O}(\frac{1}{h})$-stabilization ($\alpha=1$) can be useful in certain situations (see \hyperref[rem-2]{\Cref{rem-2}} and the numerical results from \hyperref[sec::numerical_results]{\Cref{sec::numerical_results}} for further discussion). Notice also that the scaling of $\cred{\zeta}^\sc{f}$ in (\ref{stab-param-F}) and $\cred{\zeta}^\sc{s}$ in (\ref{stab-param-S}) differs; this is actually the physically consistent scaling and stems from the fact that the primal variables in both subdomains have different \cred{physical dimensions}.
\end{remark}

\subsection{HHO discretization for the first-order coupling formulation}

The space semi-discrete problem for the coupled elasto-acoustic wave problem reads as follows: Find $(\bd{m}_{\Tf},\hat{p}_{\Mf})\in C^1(\ol{J};\bd{M}^{k}(\Tf) \times {\widehat{P}}^{k}_0(\Mf))$ and $(\bbm{s}_{\Ts},\hat{\bd{v}}_{\Ms})\in  C^1(\ol{J};\Usigmas \times \bd{\widehat{V}}{}^{k}_0(\Ms))$ such that, for all $(\bd{r}_{\Tf}, \hat{{q}}_{\Mf}) \in \bd{M}^{k}(\Tf) \times {\widehat{P}}^{k}_0(\Mf)$ and all $(\bbm{b}_{\Ts},\hat{\bd{w}}_{\Ms}) \in \Usigmas \times \bd{\widehat{V}}{}^{k}_0(\Ms)$, and all $t\in \ol{J}$,
\begin{subequations}\label{HHO1}
\begin{align}
\label{acoustic1}
(\partial_t \bd{m}_{\Tf}(t), \bd{r}_{\Tf})_{{\bd{L}^2(\rho^\sc{f};\domain{f})}} & - (\bd{g}_{\Tf}(\hat{p}_{\Mf}(t)), \bd{r}_{\Tf})_{{\bd{L}^2(\domain{f})}} = 0,\\
(\partial_t p_{\Tf}(t),q_{\Tf})_{{L^2(\frac{1}{\kappa};\domain{f})}} & + (\bd{m}_{\Tf}(t), \bd{g}_{\Tf}(\hat{q}_{\Mf}))_{{\bd{L}^2(\domain{f})}} \nonumber\\
 & + s_{\Mf}(\hat{p}_{\Mf}(t),\hat{q}_{\Mf}) + (\bd{v}_{\Fs}(t) {\cdot} \bd{n}_{\G}, q_{\Ff})_{L^2(\G)} = (f^{\sc{f}}(t),q_{\Tf})_{L^2(\domain{f})}, \label{acoustic2}
\end{align}
\end{subequations}  
and 
\begin{subequations}
\begin{align}
\label{elastic1}
(\partial_t \bbm{s}_{\Ts}(t), \bbm{b}_{\Ts})_{\bbm{L}^2(\bbm{C}^{-1};\domain{s})} & - (\bbm{g}^{\rm{sym}}_{\Ts}(\hat{\bd{v}}_{\Ms}(t)), \bbm{b}_{\Ts})_{\bbm{L}^2(\domain{s})} = 0,\\
\label{elastic2}
(\partial_t \bd{v}_{\Ts}(t), \bd{w}_{\Ts})_{\bd{L}^2(\rho^\sc{s};\domain{s})} & + (\bbm{s}_{\Ts}(t), \bbm{g}_{\Ts}^{\rm{sym}}(\hat{\bd{w}}_{\Ms}))_{\bbm{L}^2(\domain{s})} \nonumber\\
& + s_{\Ms}(\hat{\bd{v}}_{\Ms}(t), \hat{\bd{w}}_{\Ms}) -(p_{\Ff}(t) \bd{n}_{\G},\bd{w}_{\Fs})_{\bd{L}^2(\G)} = (\bd{f}^{\sc{s}}(t), \bd{w}_{\Ts})_{\bd{L}^2(\domain{s})}.
\end{align}
\label{HHO2}
\end{subequations}  
The initial conditions for the discrete coupled problem $\bd{m}_{\Tf}(0),\hat{p}_{\Mf}(0), \bbm{s}_{\Ts}(0)$, and $\hat{\bd{v}}_{\Ms}(0)$ are further discussed in Section~4. 

We emphasize the seamless enforcement of the coupling conditions in \eqref{acoustic2} and \eqref{elastic2} exploiting the fact that face unknowns are readily available on $\G$ in the HHO setting. Moreover, the total discrete mechanical energy over the elasto-acoustic domain $\Omega$ is defined as $\cal{E}_h(t) := \cal{E}_h^\sc{f}(t) + \cal{E}_h^\sc{s}(t)$ with 
\begin{equation*}
\cred{\cal{E}_h^\sc{f}(t) := \frac{1}{2} \|\bd{m}_{\Tf}(t)\|^2_{\bd{L}^2(\rho^\sc{f};\domain{f})} + \frac{1}{2} \|p_{\Tf}(t) \|^2_{L^2(\frac{1}{\kappa};\domain{f})}, \quad \cal{E}_h^\sc{s}(t) := \dfrac{1}{2} \|\bd{v}_{\Ts}(t)\|^2_{\bd{L}^2(\rho^\sc{s}; \domain{s})} + \frac{1}{2} \|\bbm{s}_{\Ts}(t)\|^2_{\bbm{L}^2(\bbm{C}^{-1};\domain{s})}.}
\end{equation*}
\begin{lemma}[Semi-discrete energy balance]
The following discrete energy balance holds: For all $t\in\ol{J}$,
\begin{multline}
\cal{E}_h(t) + \int_0^t \Big\{s_{\Mf}(\hat{p}_{\cred{\Mf}}(\tau), \hat{p}_{\cred{\Mf}}(\tau)) + s_{\Ms}(\hat{\bd{v}}_{\cred{\Ms}}(\tau), \hat{\bd{v}}_{\cred{\Ms}}(\tau))\Big\} \rm{d\tau} = \\ 
\cal{E}_h(0) + \int_0^t \Big\{(f^{\sc{f}}(\tau), p_{\cred{\Tf}}(\tau))_{L^2(\domain{f})} + (\bd{f}^{\sc{s}}(\tau), \bd{v}_{\cred{\Ts}}(\tau))_{\bd{L}^2(\domain{s})} \Big\} \rm{d\tau}.
\label{discrete_energy}
\end{multline}
\end{lemma}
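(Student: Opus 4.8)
The plan is to follow the same route as the proof of \Cref{lem:energy-cons}: test the semi-discrete equations with the discrete solution itself at each time $t\in\ol{J}$, use the symmetry of the $L^2$-type inner products to cancel the reconstructed-gradient terms, use the skew-symmetric structure of the coupling terms to cancel them as well, and keep the (nonnegative) stabilization contributions on the left-hand side. Concretely, I would take $\bd{r}_{\Tf}:=\bd{m}_{\Tf}(t)$ in \eqref{acoustic1}, $\hat{q}_{\Mf}:=\hat{p}_{\Mf}(t)$ in \eqref{acoustic2}, $\bbm{b}_{\Ts}:=\bbm{s}_{\Ts}(t)$ in \eqref{elastic1}, and $\hat{\bd{w}}_{\Ms}:=\hat{\bd{v}}_{\Ms}(t)$ in \eqref{elastic2}; each is an admissible test function since the semi-discrete solution lives in the corresponding discrete (sub)space. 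Since the material coefficients $\rho^{\sc{f}}$, $\kappa$, $\rho^{\sc{s}}$ and $\bbm{C}$ are time-independent, terms such as $(\partial_t\bd{m}_{\Tf}(t),\bd{m}_{\Tf}(t))_{\bd{L}^2(\rho^{\sc{f}};\domain{f})}$ equal $\tfrac12\tfrac{\mathrm{d}}{\mathrm{d}t}\|\bd{m}_{\Tf}(t)\|^2_{\bd{L}^2(\rho^{\sc{f}};\domain{f})}$, and similarly for the other kinetic and potential contributions.

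Adding the two acoustic equations, the terms $-(\bd{g}_{\Tf}(\hat{p}_{\Mf}(t)),\bd{m}_{\Tf}(t))_{\bd{L}^2(\domain{f})}$ and $+(\bd{m}_{\Tf}(t),\bd{g}_{\Tf}(\hat{p}_{\Mf}(t)))_{\bd{L}^2(\domain{f})}$ cancel, and the two time-derivative terms combine into $\tfrac{\mathrm{d}}{\mathrm{d}t}\cal{E}_h^{\sc{f}}(t)$, which gives
\begin{equation*}
\tfrac{\mathrm{d}}{\mathrm{d}t}\cal{E}_h^{\sc{f}}(t) + s_{\Mf}(\hat{p}_{\Mf}(t),\hat{p}_{\Mf}(t)) + (\bd{v}_{\Fs}(t){\cdot}\bd{n}_{\G}, p_{\Ff}(t))_{L^2(\G)} = (f^{\sc{f}}(t),p_{\Tf}(t))_{L^2(\domain{f})}.
\end{equation*}
Adding the two elastic equations and using the symmetry of the $\bbm{L}^2(\domain{s})$ inner product to cancel $\pm(\bbm{g}^{\rm{sym}}_{\Ts}(\hat{\bd{v}}_{\Ms}(t)),\bbm{s}_{\Ts}(t))_{\bbm{L}^2(\domain{s})}$ gives, analogously,
\begin{equation*}
\tfrac{\mathrm{d}}{\mathrm{d}t}\cal{E}_h^{\sc{s}}(t) + s_{\Ms}(\hat{\bd{v}}_{\Ms}(t),\hat{\bd{v}}_{\Ms}(t)) - (p_{\Ff}(t)\bd{n}_{\G}, \bd{v}_{\Fs}(t))_{\bd{L}^2(\G)} = (\bd{f}^{\sc{s}}(t),\bd{v}_{\Ts}(t))_{\bd{L}^2(\domain{s})}.
\end{equation*}

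The last step is to add these two identities: the interface contributions cancel because $(p_{\Ff}(t)\bd{n}_{\G},\bd{v}_{\Fs}(t))_{\bd{L}^2(\G)}=\int_{\G}p_{\Ff}(t)\,\big(\bd{v}_{\Fs}(t){\cdot}\bd{n}_{\G}\big)=(\bd{v}_{\Fs}(t){\cdot}\bd{n}_{\G},p_{\Ff}(t))_{L^2(\G)}$, which is precisely the discrete counterpart of the non-dissipative (skew-symmetric) coupling exploited in \Cref{lem:energy-cons}. Using $\cal{E}_h=\cal{E}_h^{\sc{f}}+\cal{E}_h^{\sc{s}}$ and integrating the resulting identity over $(0,t)$ then yields \eqref{discrete_energy}. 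I do not expect a genuine obstacle: the argument is purely algebraic, and the only points needing a bit of care are the sign bookkeeping that makes the gradient-reconstruction and the coupling terms cancel, and the observation that $s_{\Mf}$ and $s_{\Ms}$ are positive semi-definite, so that they are the only additional (dissipative) terms and remain on the left-hand side of the balance.
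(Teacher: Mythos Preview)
Your proposal is correct and follows exactly the approach the paper indicates: the paper's proof simply reads ``Similar to the proof of Lemma~\ref{lem:energy-cons},'' and you have spelled out precisely that argument---testing with the discrete solution, cancelling the reconstructed-gradient terms by symmetry, cancelling the interface terms by their skew-symmetric structure, and integrating in time. The only superfluous remark is that positive semi-definiteness of $s_{\Mf}$ and $s_{\Ms}$ is not needed here, since \eqref{discrete_energy} is an identity, not an inequality.
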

\begin{proof}
Similar to the proof of Lemma~\ref{lem:energy-cons}.
\end{proof}

\section{Error analysis}\label{sec::Error_analysis}

In this section, we prove an energy-error estimate for the space semi-discrete problem \eqref{HHO1}-\eqref{HHO2} by using suitable interpolation operators. Here onwards, the inequality $a \leq Cb$ for positive numbers $a$ and $b$ is abbreviated as
$a \lesssim b$, where the value of $C$ is independent of the mesh-size $h$, the material parameters, the length scale $\ell_\Omega$, and the time scale $T_{\rm{f}}$. The value of $C$ can depend on the mesh shape-regularity, the polynomial degree,  the space dimension, and the ratio $\frac{\lambda}{\mu}$.

\subsection{Interpolation operators}
Inspired \cred{by} \cite{EK_2024}, we use, in both subdomains, the H$^+$ interpolation operator from \cite{DS_2021}  to approximate  the dG variable and the classical HHO interpolation operator (based on $L^2$-orthogonal projections) to approximate the HHO variable. 

In the acoustic part, we employ the H$^+$ interpolation operator $\bd{I}^{\hdg}_T:\bH^{\nu}(T)\to \bd{P}^k(T)$, $\nu\in(\frac{1}{2},1]$, defined for all $T\in\cal{T}^{\sc{f}}$ as follows: We consider the $L^2$-orthogonal decomposition 
\begin{equation} 
\bd{P}^k(T) = \nabla P^{k+1}_{\mbox{*}}(T)\oplus \bd{Z}^k(T),
\end{equation}
where $P^{k+1}_{\mbox{*}}(T):=\{q\in P^{k+1}(T):(q,1)_{L^2(T)}=0\}$ and $\bd{Z}^k(T):= \nabla P^{k+1}_{\mbox{*}}(T)^\perp\cap \bd{P}^k(T)$ (orthogonalities are understood in $\bd{L}^2$). Then, for all $\bm\in\bH^{\nu}(T)$, we define  $\bd{I}^{\hdg}_T(\bm)\in \bd{P}^k(T)$  from the following conditions: 
\begin{subequations} 
\begin{alignat}{2}
(\bd{I}^{\hdg}_T(\bm)-\bm, \br)_{\bd{L}^2(T)} &= 0 &\qquad&\forall\br\in \bd{Z}^k(T),\label{hdg:1a}\\
(\bd{I}^{\hdg}_T(\bm)-\bm, \nabla q)_{\bd{L}^2(T)} &= (\Pi^k_{\dt}(\bm{\cdot}\bd{n}_T)-\bm{\cdot}\bd{n}_T, q)_{L^2(\dt)}&\qquad&\forall q\in P^{k+1}_{\mbox{*}}(T).\label{hdg.1b}
\end{alignat} 
\end{subequations}
Notice that \eqref{hdg.1b} actually holds true for all $q\in P^{k+1}(T)$. For all $T\in \Tf$, the interpolation operator $\bd{I}^{\hdg}_T$ is well-defined and the following holds for all $\ell_{\bm}\in[\nu,k+1]$ (see \cite[Prop.~2.1]{DS_2021} for a proof):
\begin{align}
\|\bd{I}_T^{\hdg}(\bm)-\bm\|_{\bd{L}^2(T)}+h_T^{\frac{1}{2}}\|\bd{I}_T^{\hdg}(\bm)-\bm\|_{\bd{L}^2(\dt)} \lesssim h_T^{\ell_{\bm}}|\bm|_{\bH^{\ell_{\bm}}(T)}.\label{est:approx-h+}
\end{align}
The global interpolation operator $\bd{I}^{\hdg}_{\Tf}:\bH^{\nu}(\domain{f})\to \bd{P}^k(\Tf)$ is defined as $(\bd{I}^{\hdg}_{\Tf}(\bm))|_T:=\bd{I}^{\hdg}_T(\bm|_T)$ for all $T\in\Tf$ and all $\bm\in\bH^{\nu}(\domain{f})$.  For the HHO variable, we employ the standard HHO interpolation operator $\hat{I}_{\Mf}^{\hho}:H^1_{0\G}(\domain{f})\to \UhatfD$ defined, for all $p\in H^1_{0\G}(\domain{f})$, by
\begin{align}
\hat{I}_{\Mf}^{\hho}(p):=(\Pi^{k'}_{\Tf}(p),\Pi^k_{\Ff}(p|_{\Ff}))\in \UhatfD.
\end{align}
 
In the elastic part,  we employ the H$^+$ interpolation operator $\bbm{I}^{\hdg}_{\Ts}:\bbm{H}^{\nu}_{\text{sym}}(\domain{s})\to \bbm{P}^k_{\text{sym}}(\Ts)$, $\nu\in(\frac{1}{2},1]$, defined for all $T\in\Ts$ as follows: We consider the $L^2$-orthogonal decomposition 
\begin{equation}
\bbm{P}_{\text{sym}}^k(T) = \nabla_{\rm{sym}} \bd{P}^{k+1}_{\mbox{*}}(T)\oplus \bbm{Z}^k_{\text{sym}}(T),  
\end{equation}
where $\bd{P}^{k+1}_{\mbox{*}}(T):=[P_*^{k+1}(T)]^d:=\{\bd{q}\in \bd{P}^{k+1}(T):(\bd{q},\bd{e}_i)_{\bd{L}^2(T)}=0\quad\forall i\in\{1,\dots,d\}\}$ with  the canonical basis $(\bd{e}_i)_{i\in\{1,\dots,d\}}$ of $\bb{R}^d$, and $\bbm{Z}^k_{\text{sym}}(T):= \nabla_{\rm{sym}} \bd{P}^{k+1}_{\mbox{*}}(T)^\perp\cap \bbm{P}^k_{\text{sym}}(T)$. Then, for all $\bbm{s}\in  \bbm{H}^{\nu}_{\text{sym}}(\domain{s})$, we define $\bbm{I}^{\hdg}_T(\bbm{s})\in \bbm{P}^k_{\text{sym}}(T)$ from the following conditions:  
\begin{subequations} 
\begin{alignat}{2}
(\bbm{I}^{\hdg}_T(\bbm{s})-\bbm{s}, \bbm{b})_{\bbm{L}^2(T)} &= 0 &\qquad&\forall\bbm{b}\in \bbm{Z}_{\text{sym}}^k(T),\label{hdgs:1a}\\
(\bbm{I}^{\hdg}_T(\bbm{s})-\bbm{s}, \nabla_{\rm{sym}} \bd{w})_{\bbm{L}^2(T)} &= (\bd{\Pi}^k_{\dt}(\bbm{s}{\cdot}\bd{n}_T)-\bbm{s}{\cdot}\bd{n}_T, \bd{w})_{\bd{L}^2(\dt)}&\qquad&\forall \bd{w}\in \bd{P}^{k+1}_{\mbox{*}}(T).\label{hdgs.1b}
\end{alignat} 
\end{subequations}
For all $T\in\Ts$, the interpolation operator $\bbm{I}^{\hdg}_{T}$ is well-defined and the following holds for all $\ell_{\bbm{s}}\in[\nu,k+1]$ (see \cite[Prop.~3.1]{DS_2021} for a proof):
\begin{align}
\|\bbm{I}_T^{\hdg}(\bbm{s})-\bbm{s}\|_{\bbm{L}^2(T)}+h_T^{\frac{1}{2}}\|\bbm{I}_T^{\hdg}(\bbm{s})-\bbm{s}\|_{\bbm{L}^2(\dt)} \lesssim h_T^{\ell_{\bbm{s}}}|\bbm{s}|_{\bbm{H}^{\ell_{\bbm{s}}}(T)}.\label{est:approxs-h+}
\end{align}
The global interpolation operator $\bbm{I}^{\hdg}_{\Ts}:\bbm{H}^{\nu}_{\text{sym}}(\domain{s})\to \bbm{P}^k_{\text{sym}}(\Ts)$ is defined as $(\bbm{I}^{\hdg}_{\Ts}(\bbm{s}))|_T:=\bbm{I}^{\hdg}_T(\bbm{s}|_T)$ for all $T\in\Ts$ and all $\bbm{s}\in\bbm{H}^{\nu}_{\text{sym}}(\domain{s})$. For the HHO variable, we employ the HHO interpolation operator $\hat{\bd{I}}{}_{\Ms}^{\hho}:\bH^1_{0\G}(\domain{s})\to \UhatsD$ defined, for all $\bd{v}\in \bH^1_{0\G}(\domain{s})$, by
\begin{align}
\hat{\bd{I}}{}_{\Ms}^{\hho}(\bd{v}):=(\bd{\Pi}^{k'}_{\Ts}(\bd{v}),\bd{\Pi}^k_{\Fs}(\bd{v}|_{\Fs}))\in \UhatsD.
\end{align}

Now, we define some notation to be used for the error analysis in the next section.  For all $\hat{p}_{\Mf}\in\UhatfD$ and all $\hat{\bd{v}}_{\Ms}\in\UhatsD$, the HHO norms are (classically) defined as follows:
\begin{align}
\|\hat{p}_{\Mf}\|_{\hho,\sF}^2 : =\sum_{T\in\Tf}\cred{\zeta}^\sc{f}\Big(\|\nabla p_T\|_{\bd{L}^2(T)}^2+ h_T^{-1}\|p_{\dt}-p_T\|^2_{L^2(\dt)}\Big),
\end{align}
and
\begin{align}
\|\hat{\bd{v}}_{\Ms}\|_{\hho,\sS}^2 : =\sum_{T\in\Ts}\cred{\zeta}^\sc{s}\Big(\|\nabla_{\rm{sym}} \bd{v}_T\|_{\bbm{L}^2(T)}^2+ h_T^{-1}\|\bd{v}_{\dt}-\bd{v}_T\|^2_{\bd{L}^2(\dt)}\Big),
\end{align}
where $\cred{\zeta}^\sF$ and $\cred{\zeta}^\sS$ are defined in \eqref{stab-param-F} and \eqref{stab-param-S}, respectively. We set  
\begin{equation}
|\hat{p}_{\Mf}|_{\text{S}^{\sF}}^2:=s_{\Mf}(\hat{p}_{\Mf}, \hat{p}_{\Mf}) \quad \text{and} \quad |\hat{\bd{v}}_{\Ms}|_{\text{S}^{\sS}}^2:=s_{\Ms}(\hat{\bd{v}}_{\Ms}, \hat{\bd{v}}_{\Ms}).
\end{equation}
For linear functionals $\phi_{\Mf}\in(\UhatfD)^\prime$  and $\phi_{\Ms}\in (\UhatsD)^\prime$, we define the following quantities: 
\begin{subequations}\label{seminorms}
\begin{alignat}{2}
\|\phi_{\Mf}\|_{(\hho, \sF)'}&:= \sup_{\hat{q}_{\Mf}\in \UhatfD } \frac{|\phi_{\Mf}(q_{\Mf})|}{\|\hat{q}_{\Mf}\|_{\hho, \sF}},\quad \|\phi_{\Ms}\|_{(\hho, \sS)'}&:= \sup_{\hat{\bw}_{\Ms}\in \UhatsD } \frac{|\phi_{\Ms}(\hat{\bw}_{\Ms})|}{\|\hat{\bw}_{\Ms}\|_{\hho, \sS}},\\
\|\phi_{\Mf}\|_{(\text{S}^\sF)'}&:= \sup_{\hat{q}_{\Mf}\in \UhatfD } \frac{|\phi_{\Mf}(\hat{q}_{\Mf})|}{|\hat{q}_{\Mf}|_{\text{S}^\sF}},\quad \|\phi_{\Ms}\|_{(\text{S}^\sS)'}&:= \sup_{\hat{\bw}_{\Ms}\in \UhatsD } \frac{|\phi_{\Ms}(\hat{\bw}_{\Ms})|}{|\hat{\bw}_{\Ms}|_{\text{S}^\sS}}.\label{seminorm2}
\end{alignat}
\end{subequations}
The seminorms in \eqref{seminorm2} may be unbounded for general linear functionals $\phi_{\Mf}$ and $\phi_{\Ms}$, but we will see that they remain bounded for the consistency errors. For all $(\bm,p)\in\bd{H}^\nu(\domain{f})\times H^1_{0\G}(\domain{f})$, $\nu\in (\frac{1}{2},1]$, we define the augmented seminorm 
\begin{align}
|(\bm,p)|^2_{\#,\sF}:= \sum_{T\in\Tf}\left\{\tih_T^{\alpha}\|\bd{B}_{\dt}(\bm){\cdot}\bd{n}_T\|^2_{L^2(\frac{1}{\cred{\zeta}^\sc{f}};\partial T)}+h_T\tih^{-\alpha}_T\|\nabla B_T(p)\|^2_{\bd{L}^2(\cred{\zeta}^\sc{f};T)}\right\},\label{19}
\end{align}
with $\bd{B}_{\dt}(\bm):=(\bm-\bd{I}^{\hdg}_T(\bm))|_{\dt}$ and $B_T(p):=p-\Pi_T^{k^\prime}(p)$ for all $T\in\Tf$. For all $(\bbm{s},\bd{v})\in\bbm{H}^\nu(\domain{s})\times \mathbf{H}^1_{0\G}(\domain{s})$, $\nu\in(\frac{1}{2},1]$, we define the augmented seminorm
\begin{align}
|(\bbm{s},\bd{v})|^2_{\#,\sS}:= \sum_{T\in\Ts}\left\{\tih_T^{\alpha}\|\bbm{B}_{\dt}(\bbm{s}){\cdot}\bd{n}_T\|^2_{\bd{L}^2(\frac{1}{\cred{\zeta}^\sc{s}};\partial T)}+h_T\tih^{-\alpha}_T\|\nabla_{\rm{sym}}\bd{B}_T(\bd{v})\|^2_{\bbm{L}^2(\cred{\zeta}^\sc{s};T)}\right\},\label{20}
\end{align}
with $\bbm{B}_{\dt}(\bbm{s}):=(\bbm{s}-\bbm{I}^{\hdg}_T(\bbm{s}))|_{\dt}$ and $\bd{B}_T(\bd{v}):=\bd{v}-\bd{\Pi}_T^{k^\prime}(\bd{v})$ for all $T\in\Ts$. 

Finally, for all $t \in (0,T_{\rm{f}}]$ and the time interval $J_t:=(0,t)$, we set the following notation:
\begin{align} 
\|q\|_{L^p(J_t;*)}^{p}:=\int_{J_t}\|q(\tau)\|_{*}^p\,\rm{d\tau}\quad\text{for all}\;p\in[1,\infty),\qquad \|q\|_{C^0(\ol{J}_t;*)}:=\sup_{s\in\ol{J}_t}\|q(s)\|_{*},
\end{align}
where $\|\cdot\|_{*}$ is a seminorm or a norm depending on the context.  
\subsection{Energy-error estimate}
We are now ready to state and prove our main error estimate. 
\begin{theorem}[Energy-error estimate]\label{thm:err-est}
Let $(\bd{m}, p)$ and $(\bbm{s},\bd{v})$ solve \eqref{weak_form_acoustic_eq} and \eqref{weak_form_elastic_eq} with the initial conditions \eqref{IC_BC_acoustic} and \eqref{IC_BC_elastic}, respectively, and assume that $\bd{m} \in C^1(\overline{J};\bd{H}^\nu(\domain{f})) $ and $\bbm{s}\in C^1(\overline{J};\bbm{H}^\nu(\domain{s}))$, $\nu\in(\frac{1}{2},1]$.  Let $(\bd{m}_{\Tf},\hat{p}_{\Mf})$ and $(\bbm{s}_{\Ts},\hat{\bd{v}}_{\Ms})$ solve \eqref{HHO1} and \eqref{HHO2} with the initial conditions 	$\hat{p}_{\Mf}(0)  = \hat{I}_{\Mf}^{\hho}(p_0),  \bd{m}_{\Tf}(\bd{0}) = \bd{I}_{\Tf}^{\hdg}(\bd{m}_0), \hat{\bd{v}}_{\Ms}(\bd{0})  = \hat{\bd{I}}{}_{\Ms}^{\hho}(\bd{v}_0),$ and  $ \bbm{s}_{\Ts}(0) = \bbm{I}_{\Ts}^{\hdg}(\bbm{s}_0)$, respectively. The following holds for all $t \in (0,T_{\rm{f}}]$:
\ifHAL
\begin{align}
&\|\bd{m}-\bmt\|^2_{C^{0}(\ol{J}_t;\bd{L}^2(\rho^\sc{f};\domain{f}))}+\|p-p_{\Tf}\|^2_{C^{0}(\ol{J}_t;L^2(\frac{1}{\kappa};\domain{f}))} +\|\bbm{s}-\bbm{s}_{\Ts}\|^2_{C^{0}(\ol{J}_t;\bbm{L}^2(\bbm{C}^{-1};\domain{s}))}\nonumber\\& \quad+\|\bd{v}-\bd{v}_{\Ts}\|^2_{C^{0}(\ol{J}_t;\bd{L}^2(\rho^\sc{s};\domain{s}))}\nonumber\\&\lesssim\|\bd{m}-\bd{I}_{\Tf}^{\hdg} (\bd{m})\|^2_{C^{0}(\ol{J}_t;\bd{L}^2(\rho^\sc{f};\domain{f}))}+\|\partial_t \bd{m}-\bd{I}_{\Tf}^{\hdg}(\partial_t \bd{m})\|^2_{L^1(J_t;\bd{L}^2(\rho^\sF;\domain{f}))}+\|p-\Pi^k_{\Tf}(p)\|_{C^{0}(\ol{J}_t;L^2(\frac{1}{\kappa};\domain{f}))}^2\nonumber\\&\quad +\|\bbm{s}-\bbm{I}_{\Ts}^{\hdg} (\bbm{s})\|^2_{C^{0}(\ol{J}_t;\bbm{L}^2(\bbm{C}^{-1};\domain{s}))}+\|\partial_t \bbm{s}-\bbm{I}_{\Ts}^{\hdg}(\partial_t \bbm{s})\|^2_{L^1(J_t;\bbm{L}^2(\bbm{C}^{-1};\domain{s}))}+\|\bd{v}-\bd{\Pi}_{\Ts}^{k}(\bd{v})\|^2_{C^{0}(\ol{J}_t;\bd{L}^2(\rho^\sc{s};\domain{s}))}\nonumber\\&\quad+\|(\bd{m},p)\|^2_{L^2(J_t;\#,\sF)}+\|(\bbm{s},\bd{v})\|^2_{L^2(J_t;\#,\sS)}.\label{eq:err-est}
\end{align}
\else
\begin{align}
&\|\bd{m}-\bmt\|^2_{C^{0}(\ol{J}_t;\bd{L}^2(\rho^\sc{f};\domain{f}))}+\|p-p_{\Tf}\|^2_{C^{0}(\ol{J}_t;L^2(\frac{1}{\kappa};\domain{f}))} +\|\bbm{s}-\bbm{s}_{\Ts}\|^2_{C^{0}(\ol{J}_t;\bbm{L}^2(\bbm{C}^{-1};\domain{s}))} +\|\bd{v}-\bd{v}_{\Ts}\|^2_{C^{0}(\ol{J}_t;\bd{L}^2(\rho^\sc{s};\domain{s}))}\nonumber\\&\lesssim\|\bd{m}-\bd{I}_{\Tf}^{\hdg} (\bd{m})\|^2_{C^{0}(\ol{J}_t;\bd{L}^2(\rho^\sc{f};\domain{f}))}+\|\partial_t \bd{m}-\bd{I}_{\Tf}^{\hdg}(\partial_t \bd{m})\|^2_{L^1(J_t;\bd{L}^2(\rho^\sF;\domain{f}))}+\|p-\Pi^k_{\Tf}(p)\|_{C^{0}(\ol{J}_t;L^2(\frac{1}{\kappa};\domain{f}))}^2\nonumber\\&\quad +\|\bbm{s}-\bbm{I}_{\Ts}^{\hdg} (\bbm{s})\|^2_{C^{0}(\ol{J}_t;\bbm{L}^2(\bbm{C}^{-1};\domain{s}))}+\|\partial_t \bbm{s}-\bbm{I}_{\Ts}^{\hdg}(\partial_t \bbm{s})\|^2_{L^1(J_t;\bbm{L}^2(\bbm{C}^{-1};\domain{s}))}+\|\bd{v}-\bd{\Pi}_{\Ts}^{k}(\bd{v})\|^2_{C^{0}(\ol{J}_t;\bd{L}^2(\rho^\sc{s};\domain{s}))}\nonumber\\&\quad+\|(\bd{m},p)\|^2_{L^2(J_t;\#,\sF)}+\|(\bbm{s},\bd{v})\|^2_{L^2(J_t;\#,\sS)}.\label{eq:err-est}
\end{align}
\fi
\end{theorem}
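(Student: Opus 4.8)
The plan is to run the classical energy argument for semi-discrete wave problems within the HHO framework, comparing the discrete solution with the interpolants of the exact solution and exploiting the skew-symmetry of the coupling (so that the coupling terms are non-dissipative). I would first introduce the \emph{discrete} errors $\bd{\theta}_{\Tf}:=\bd{I}^{\hdg}_{\Tf}(\bd{m})-\bmt$, $\hat{\xi}_{\Mf}:=\hat{I}^{\hho}_{\Mf}(p)-\hat{p}_{\Mf}=(\xi_{\Tf},\xi_{\Ff})$, $\bbm{E}_{\Ts}:=\bbm{I}^{\hdg}_{\Ts}(\bbm{s})-\bbm{s}_{\Ts}$, and $\hat{\bd{\xi}}_{\Ms}:=\hat{\bd{I}}{}^{\hho}_{\Ms}(\bd{v})-\hat{\bd{v}}_{\Ms}=(\bd{\xi}_{\Ts},\bd{\xi}_{\Fs})$. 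By the triangle inequality (using $\|p-\Pi^{k'}_{\Tf}(p)\|\le\|p-\Pi^{k}_{\Tf}(p)\|$ and the analogue for $\bd{v}$ for the HHO cell variables), the left-hand side of \eqref{eq:err-est} is bounded by the discrete-error energy plus approximation errors already present on the right-hand side, so it suffices to estimate the discrete errors. Inserting the interpolants into \eqref{HHO1}--\eqref{HHO2} and subtracting the discrete scheme produces four error equations; the key algebraic ingredient is the commuting property $\bd{g}_T(\hat{I}^{\hho}_T(p))=\bd{\Pi}^{k}_T(\nabla p)$ (the $\bd{L}^2(T)$-projection of $\nabla p$ onto $\bd{P}^k(T)$) and its symmetric-gradient analogue for $\bbm{g}^{\rm{sym}}_T$, obtained from \eqref{def:G}/\eqref{g-elastic} by a discrete integration by parts together with the fact that $\Pi^{k'}_T$ reproduces polynomials of degree $\le k'$. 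The right-hand sides of the error equations are consistency functionals $\Psi^{\sF}_1,\Psi^{\sF}_2,\Psi^{\sS}_1,\Psi^{\sS}_2$ of the test functions.

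The core of the proof is the analysis of these functionals. For the dG equations, \eqref{aweak1} and \eqref{eweak1} give $\Psi^{\sF}_1(\bd{r}_{\Tf})=-(\partial_t(\bd{m}-\bd{I}^{\hdg}_{\Tf}(\bd{m})),\bd{r}_{\Tf})_{\bd{L}^2(\rho^\sc{f};\domain{f})}$ and $\Psi^{\sS}_1(\bbm{b}_{\Ts})=-(\partial_t(\bbm{s}-\bbm{I}^{\hdg}_{\Ts}(\bbm{s})),\bbm{b}_{\Ts})_{\bbm{L}^2(\bbm{C}^{-1};\domain{s})}$, using that $\partial_t$ commutes with the time-independent linear interpolant. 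For the HHO (primal) equations, I would first note that $(\partial_t\Pi^{k'}_{\Tf}(p),q_{\Tf})_{L^2(\frac{1}{\kappa};\domain{f})}=(\partial_t p,q_{\Tf})_{L^2(\frac{1}{\kappa};\domain{f})}$ (piecewise-constant weight, $q_{\Tf}\in P^{k'}$), and then combine the discrete integration-by-parts identity for $\bd{g}_T$ (from \eqref{def:G}), the defining relations \eqref{hdg:1a}--\eqref{hdg.1b} of $\bd{I}^{\hdg}_T$, the identity $\Pi^{k}_{\partial T}(\delta_{\partial T}(\cdot))=S_{\partial T}(\cdot)$, the strong form \eqref{continuous_acoustic_eq_2}, and the single-valuedness of the face unknowns, so that all terms requiring more than $\bd{H}^\nu$-regularity of $\bd{m}$ cancel. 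Crucially, the contributions surviving on the interface $\G$ are cancelled exactly by the coupling consistency term $(\bd{\Pi}^{k}_{\Fs}(\bd{v})\cdot\bd{n}_{\G},q_{\Ff})_{L^2(\G)}$, thanks to \eqref{coupling1} and the identity $\bd{\Pi}^{k}_F(\bd{v})\cdot\bd{n}_{\G}=\Pi^{k}_F(\bd{v}\cdot\bd{n}_{\G})$ on flat faces. What remains is $\Psi^{\sF}_2(\hat{q}_{\Mf})=\sum_{T\in\Tf}\bigl(S_{\partial T}(\hat{q}_T),(\bd{m}-\bd{I}^{\hdg}_T(\bd{m}))\cdot\bd{n}_T\bigr)_{L^2(\partial T)}+s_{\Mf}(\hat{I}^{\hho}_{\Mf}(p),\hat{q}_{\Mf})$, which by Cauchy--Schwarz against $|\cdot|_{\text{S}^{\sF}}$ and a trace inequality on $p-\Pi^{k'}_T(p)$ is bounded by $\bigl(|(\bd{m},p)|_{\#,\sF}+\cal{A}^{\sF}\bigr)|\hat{q}_{\Mf}|_{\text{S}^{\sF}}$, with $\cal{A}^{\sF}$ collecting the listed approximation errors; $\Psi^{\sS}_2$ is treated symmetrically via \eqref{continuous_elastic_2}, \eqref{hdgs:1a}--\eqref{hdgs.1b}, and \eqref{coupling2}.

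Next I would test the four error equations with $\bd{r}_{\Tf}=\bd{\theta}_{\Tf}$, $\hat{q}_{\Mf}=\hat{\xi}_{\Mf}$, $\bbm{b}_{\Ts}=\bbm{E}_{\Ts}$, $\hat{\bd{w}}_{\Ms}=\hat{\bd{\xi}}_{\Ms}$ and sum. The reconstruction pairings $\pm(\bd{g}_{\Tf}(\hat{\xi}_{\Mf}),\bd{\theta}_{\Tf})_{\bd{L}^2(\domain{f})}$ and $\pm(\bbm{g}^{\rm{sym}}_{\Ts}(\hat{\bd{\xi}}_{\Ms}),\bbm{E}_{\Ts})_{\bbm{L}^2(\domain{s})}$ cancel, and the interface pairings $(\bd{\xi}_{\Fs}\cdot\bd{n}_{\G},\xi_{\Ff})_{L^2(\G)}$ and $-(\xi_{\Ff}\bd{n}_{\G},\bd{\xi}_{\Fs})_{L^2(\G)}$ cancel (non-dissipative coupling), leaving
\[
\tfrac{1}{2}\tfrac{d}{dt}\cal{E}_h^{\mathrm{err}}(t)+|\hat{\xi}_{\Mf}(t)|^2_{\text{S}^{\sF}}+|\hat{\bd{\xi}}_{\Ms}(t)|^2_{\text{S}^{\sS}}=\Psi^{\sF}_1(\bd{\theta}_{\Tf})+\Psi^{\sF}_2(\hat{\xi}_{\Mf})+\Psi^{\sS}_1(\bbm{E}_{\Ts})+\Psi^{\sS}_2(\hat{\bd{\xi}}_{\Ms}),
\]
with $\cal{E}_h^{\mathrm{err}}:=\|\bd{\theta}_{\Tf}\|^2_{\bd{L}^2(\rho^\sc{f};\domain{f})}+\|\xi_{\Tf}\|^2_{L^2(\frac{1}{\kappa};\domain{f})}+\|\bbm{E}_{\Ts}\|^2_{\bbm{L}^2(\bbm{C}^{-1};\domain{s})}+\|\bd{\xi}_{\Ts}\|^2_{\bd{L}^2(\rho^\sc{s};\domain{s})}$. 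Cauchy--Schwarz and Young's inequality then let me absorb $|\hat{\xi}_{\Mf}|^2_{\text{S}^{\sF}}$ and $|\hat{\bd{\xi}}_{\Ms}|^2_{\text{S}^{\sS}}$ into the left-hand side, so that the right-hand side is reduced to $g(t)\,\cal{E}_h^{\mathrm{err}}(t)^{1/2}$ plus time-integrable data, with $g\in L^1(J)$ built from the time-derivative approximation errors. Integrating over $(0,t)$, using $\cal{E}_h^{\mathrm{err}}(0)=0$ (the discrete initial data are precisely the interpolants), taking the supremum over $\ol{J}_t$, and then solving the resulting quadratic inequality for $\sup_{\ol{J}_t}(\cal{E}_h^{\mathrm{err}})^{1/2}$ (which avoids Gronwall's lemma and produces a $T_{\rm{f}}$-independent constant) bounds the discrete-error energy; reinstating the approximation errors split off at the start and invoking \eqref{est:approx-h+} and \eqref{est:approxs-h+} gives \eqref{eq:err-est}.

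The hard part will be the consistency analysis above: showing that, once the H$^+$ interpolation identities, the discrete integration by parts, the strong-form PDEs and the transmission conditions are combined, the consistency functionals of the primal equations collapse — uniformly in $\alpha\in\{0,1\}$ — to the tight remainders controlled by $|(\bd{m},p)|_{\#,\sF}$ and $|(\bbm{s},\bd{v})|_{\#,\sS}$, with the interface contributions cancelling exactly against the coupling term (the distinctive ingredient of the coupled problem). A secondary technical point is the careful use of trace inequalities on the non-polynomial remainders $p-\Pi^{k'}_T(p)$ and $\bd{v}-\bd{\Pi}^{k'}_T(\bd{v})$ so that all bounds land precisely on the norms appearing in \eqref{eq:err-est}.
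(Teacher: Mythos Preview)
Your proposal is correct and follows essentially the same approach as the paper's proof: derive error equations for the discrete errors (interpolant minus discrete solution) via the commuting identity $\bd{g}_T(\hat{I}^{\hho}_T(p))=\bd{\Pi}^{k}_T(\nabla p)$ and its symmetric analogue, exploit the H$^+$ interpolation relations \eqref{hdg.1b}/\eqref{hdgs.1b} together with the transmission conditions so that the primal consistency functionals collapse to terms bounded by the stabilization seminorm times $|(\bd{m},p)|_{\#,\sF}$ (resp.\ $|(\bbm{s},\bd{v})|_{\#,\sS}$), then test with the discrete errors, use the skew-symmetric cancellation of the interface pairings, absorb the stabilization via Young's inequality, integrate in time, and take the supremum to close the estimate without Gronwall. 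The only cosmetic discrepancy is that the paper packages \emph{both} pieces of the primal consistency bound (the H$^+$ boundary residual of $\bd{m}$ and the stabilization of $\hat{I}^{\hho}_{\Mf}(p)$, the latter controlled via a multiplicative trace inequality and Poincar\'e) directly into the single seminorm $|(\bd{m},p)|_{\#,\sF}$, so your extra ``$\cal{A}^{\sF}$'' is superfluous.
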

\begin{proof}
(1) Error equations. For all $t\in \ol{J}$, we define the discrete errors as follows:
\begin{subequations}
\begin{alignat}{3}
\etaf(t)&:= \bmt(t)-\bd{I}_{\Tf}^{\hdg} (\bd{m}(t)),\quad &\etas(t)&:= \bbm{s}_{\Ts}(t)-\bbm{I}_{\Ts}^{\hdg}(\bbm{s}(t)), \\
\ehf(t)&:=\hat{p}_{\Mf}(t)-\hat{I}_{\Mf}^{\hho}(p(t)),\quad&\ehs(t)&:= \hat{\bd{v}}_{\Ms}(t)- \hat{\bd{I}}{}_{\Ms}^{\hho}(\bd{v}(t)).
\end{alignat}
\end{subequations}
The first equation \eqref{acoustic1} in the discrete problem leads, for all $\br_{\Tf}\in \Usigmaf$ and all $t \in \ol{J}$, to	
\ifHAL
\begin{align}
&(\partial_t\etaf(t),\br_{\Tf})_{\bd{L}^2(\rho^\sF;\domain{f})}\!-\!(\bG_{\Tf}(\ehf(t)),\br_{\Tf})_{\bd{L}^2(\domain{f})}\nonumber\\&\quad=-(\bd{I}_{\Tf}^{\hdg}(\partial_t \bd{m}(t)),\br_{\Tf})_{\bd{L}^2(\rho^\sF;\domain{f})}\!+\! (\bG_{\Tf}(\hat{I}_{\Mf}^{\hho}(p(t))),\br_{\Tf})_{\bd{L}^2(\domain{f})}\nonumber\\&\quad=(\partial_t \bd{m}(t)-\bd{I}_{\Tf}^{\hdg}(\partial_t \bd{m}(t)),\br_{\Tf})_{\bd{L}^2(\rho^\sF;\domain{f})},\label{id1}
\end{align}
\else
\begin{align}
(\partial_t\etaf(t),\br_{\Tf})_{\bd{L}^2(\rho^\sF;\domain{f})}\!-\!(\bG_{\Tf}(\ehf(t)),\br_{\Tf})_{\bd{L}^2(\domain{f})}&\!=\!-(\bd{I}_{\Tf}^{\hdg}(\partial_t \bd{m}(t)),\br_{\Tf})_{\bd{L}^2(\rho^\sF;\domain{f})}\!+\! (\bG_{\Tf}(\hat{I}_{\Mf}^{\hho}(p(t))),\br_{\Tf})_{\bd{L}^2(\domain{f})}\nonumber\\&=(\partial_t \bd{m}(t)-\bd{I}_{\Tf}^{\hdg}(\partial_t \bd{m}(t)),\br_{\Tf})_{\bd{L}^2(\rho^\sF;\domain{f})},\label{id1}
\end{align}
\fi
where we used that $\bd{I}_{\Tf}^{\hdg}(\partial_t\bullet) = \partial_t\bd{I}_{\Tf}^{\hdg}(\bullet)$, $\bG_{\Tf}(\hat{I}_h^{\hho}(p(t)))=\bd{\Pi}_{\Tf}^k(\nabla p(t))=\bd{\Pi}_{\Tf}^k(\rho^\sF\partial_t\bd{m}(t))$ (the first equality follows from the definition of $\bG_{\Tf}$ and the second from \eqref{continuous_acoustic_eq_1}), and the $\bd{L}^2$-orthogonality of $\bd{\Pi}^k_{\Tf}$. 
\ifHAL
Analogously, using \eqref{elastic1}, $\bbm{I}_{\Ts}^{\hdg}(\partial_t\bullet) \!=\! \partial_t\bbm{I}_{\Ts}^{\hdg}(\bullet)$ and the identity  $\bbm{g}_{\Ts}^{\rm{sym}}(\hat{\bd{I}}{}_h^{\hho}(\bd{v}(t))\!=\!{\bbPi}_{\Ts}^k(\nabla_{\rm{sym}} \bd{v}(t))$\\$=\bbPi_{\Ts}^k(\bbm{C}^{-1}\partial_t\bbm{s}(t))$,
\else
Analogously, using \eqref{elastic1}, $\bbm{I}_{\Ts}^{\hdg}(\partial_t\bullet) = \partial_t\bbm{I}_{\Ts}^{\hdg}(\bullet)$ and the identity 
$\bbm{g}_{\Ts}^{\rm{sym}}(\hat{\bd{I}}{}_h^{\hho}(\bd{v}(t))={\bbPi}_{\Ts}^k(\nabla_{\rm{sym}} \bd{v}(t))=\bbPi_{\Ts}^k(\bbm{C}^{-1}\partial_t\bbm{s}(t))$,
\fi
we have, for all $\bbm{b}_{\Ts}\in \Usigmas$ and all $t\in \ol{J}$,
\begin{align}
(\partial_t\etas(t),\bbm{b}_{\Ts})_{\bbm{L}^2(\bbm{C}^{-1};\domain{s})} - (\bbm{g}_{\Ts}^{\rm{sym}}(\ehs(t)),\bbm{b}_{\Ts})_{\bbm{L}^2(\domain{s})} = (\partial_t \bbm{s}(t)-\bbm{I}_{\Ts}^{\hdg}(\partial_t \bbm{s}(t)),\bbm{b}_{\Ts})_{\bbm{L}^2(\bbm{C}^{-1};\domain{s})}.\label{id2}
\end{align}
The second equation \eqref{acoustic2} in the discrete problem  and \eqref{continuous_acoustic_eq_2} in the continuous problem  show that,  for all $\hat{q}_{\Mf}\in\UhatfD$ and all $t\in \ol{J}$,
\begin{align}
&(\partial_te_{\Tf}(t),q_{\Tf})_{L^2(\frac{1}{\kappa};\domain{f})}+(\etaf(t),\bG_{\Tf}(\hat{q}_{\Mf}))_{\bd{L}^2(\domain{f})}+s_{\Mf}(\ehf(t),\hat{q}_{\Mf})+(\be_{\Fs}(t){\cdot}\bd{n}_\G,q_{\Ff})_{L^2(\Gamma)}\nonumber\\
&\quad = (\tfrac{1}{\kappa}\partial_tp(t)-\nabla{\cdot} \bd{m}(t),q_{\Tf})_{L^2(\domain{f})}-(\partial_t\Pi_{\Tf}^{k'}(p(t)),q_{\Tf})_{L^2(\frac{1}{\kappa};\domain{f})}-(\bd{I}_{\Tf}^{\hdg}(\bd{m}(t)),\bG_{\Tf}(\hat{q}_{\Mf}))_{\bd{L}^2(\domain{f})}\nonumber\\&\qquad-s_{\Mf}(\hat{I}_{\Mf}^{\hho}(p(t)),\hat{q}_{\Mf})-(\bd{\Pi}^k_{\Fs}(\bd{v}(t)){\cdot}\bd{n}_\G,q_{\Ff})_{L^2(\Gamma)}\nonumber\\
&\quad=\sum_{T\in\Tf}\Big\{(\partial_t(p(t)-\Pi_T^{k'}(p(t))),q_T)_{L^2(\frac{1}{\kappa};\domain{f})}+(\bd{m}(t)-\bd{I}_T^{\hdg}(\bd{m}(t)),\nabla q_T)_{\bd{L}^2(T)}\nonumber\\
&\qquad -(\bd{m}(t){\cdot}\bd{n}_T,q_T)_{L^2(\partial T)}+(\bd{I}_T^{\hdg}(\bd{m}(t)){\cdot}\bd{n}_T,q_T-q_{\partial T})_{L^2(\dt)}\Big\}-s_{\Mf}(\hat{I}_{\Mf}^{\hho}(p(t)),\hat{q}_{\Mf})\nonumber\\&\qquad-(\bd{\Pi}^k_{\Fs}(\bd{v}(t)){\cdot}\bd{n}_\G,q_{\Ff})_{L^2(\Gamma)}\nonumber\\
& \quad=\sum_{T\in\Tf}\Big\{-(\Pi^k_{\dt}(\bd{m}(t){\cdot}\bd{n}_T),q_T)_{L^2(\partial T)}+(\bd{I}_T^{\hdg}(\bd{m}(t)){\cdot}\bd{n}_T,q_T-q_{\partial T})_{L^2(\dt)}\Big\}\nonumber\\&\qquad-s_{\Mf}(\hat{I}_{\Mf}^{\hho}(p(t)),\hat{q}_{\Mf})-(\Pi^k_{\F^\G}(\bd{v}(t){\cdot}\bd{n}_\G),q_{\Ff})_{L^2(\Gamma)},\label{eq:18}
\end{align}
with an integration by parts  and the definition of $\bG_{\Tf}$ from \eqref{def:G} in the second step (since $\bd{I}_{\Tf}^{\hdg}(\bd{m}(t))\in \bd{P}^k(\Tf))$, and the $L^2$-orthogonality of $\Pi_T^{k'}$ and  the definition \eqref{hdg.1b} of $\bd{I}_T^{\hdg}$  in the last step. Since $\bd{\Pi}^k_{\dt}(\bd{m}(t))$ and $q_{\dt}$ are single-valued and since $q_F =0$ for all $F\in \cF_h^{\sd \sc{f}}$, we have    
\begin{align*}
\sum_{T\in\Tf}(\Pi^k_{\dt}(\bd{m}(t){\cdot}\bd{n}_T),q_{\dt})_{L^2(\partial T)}=-(\Pi^k_{\F^\G}(\bd{m}(t){\cdot}\bd{n}_\G),q_{\Ff})_{L^2(\Gamma)}=-(\Pi^k_{\F^\G}(\bd{v}(t){\cdot}\bd{n}_\G),q_{\Ff})_{L^2(\Gamma)},
\end{align*}
where we also used that $\bd{n}_T|_{{\dt}\cap\Gamma} =  -\bd{n}_\G$, and the coupling condition \eqref{coupling1}. This in \eqref{eq:18} and the observation that $(\bd{\Pi}^k_{\dt}(\bd{m}(t))-\bd{I}_T^{\hdg}(\bd{m}(t))|_{\dt}){\cdot}\bd{n}_T \in P^k(\cF_{\dt})$ result in
\begin{align}
&(\partial_te_{\Tf}(t),q_{\Tf})_{L^2(\frac{1}{\kappa};\domain{f})}+(\etaf(t),\bG_{\Tf}(\hat{q}_{\Mf}))_{\bd{L}^2(\domain{f})}+s_{\Mf}(\ehf(t),\hat{q}_{\Mf})+(\be_{\Fs}(t){\cdot}\bd{n}_\G,q_{\Ff})_{L^2(\Gamma)}\nonumber\\
&\quad =\sum_{T\in\Tf}((\bd{\Pi}^k_{\dt}(\bd{m}(t))-\bd{I}_T^{\hdg}(\bd{m}(t))){\cdot}\bd{n}_T,q_{\dt}-q_T)_{L^2(\partial T)}-s_{\Mf}(\hat{I}_{\Mf}^{\hho}(p(t)),\hat{q}_{\Mf})\nonumber\\&\quad =\sum_{T\in\Tf}((\bd{m}(t)-\bd{I}_T^{\hdg}(\bd{m}(t))){\cdot}\bd{n}_T,\Pi^k_{\dt}(q_{\partial T}-q_T))_{L^2(\partial T)}-s_{\Mf}(\hat{I}_{\Mf}^{\hho}(p(t)),\hat{q}_{\Mf}).\label{erroreqn1}
\end{align}
Similarly, for the elastic problem, the second equation \eqref{elastic2} in the discrete problem  and \eqref{continuous_elastic_2} in the continuous problem  show that,  for all $\hat{\bw}_{\Ms}\in\UhatsD$ and all $t\in \ol{J}$,
\begin{align}
&(\partial_t\be_{\Ts}(t),\bw_{\Ts})_{\bd{L}^2(\rho^\sc{s};\domain{s})}+(\etas(t),\bbm{g}_{\Ts}^{\rm{sym}}(\hat{\bw}_{\Ms}))_{\bbm{L}^2(\domain{s})}+s_{\Ms}(\ehs(t),\hat{\bw}_{\Ms})-(e_{\Ff}(t)\bd{n}_\G,\bw_{\Fs})_{\bd{L}^2(\Gamma)}\nonumber\\
&\quad = (\rho^\sc{s}\partial_t\bd{v}(t) - \nabla{\cdot} \bbm{s}(t),\bw_{\Ts})_{\bd{L}^2(\domain{s})}-(\partial_t\bd{\Pi}_{\Ts}^{k'}(\bd{v}(t)),\bw_{\Ts})_{\bd{L}^2(\rho^\sc{s};\domain{s})}-(\bbm{I}_{\Ts}^{\hdg}(\bbm{s}(t)),\bbm{g}_{\Ts}^{\rm{sym}}(\hat{\bw}_{\Ms}))_{\bbm{L}^2(\domain{s})}\nonumber\\&\qquad-s_{\Ms}(\hat{\bd{I}}{}_{\Ms}^{\hho}(\bd{v}(t)),\hat{\bw}_{\Ms})+(\Pi^k_{\Fs}(p(t))\bd{n}_\G,\bw_{\Fs})_{\bd{L}^2(\Gamma)}\nonumber\\
&\quad=\sum_{T\in\Ts}\Big\{(\partial_t(\bd{v}(t)-\bd{\Pi}_{T}^{k'}(\bd{v}(t))),\bw_{\Ts})_{L^2(\rho^\sc{s};\domain{s})}+(\bbm{s}(t)-\bbm{I}_T^{\hdg}(\bbm{s}(t)),\nabla_{\rm{sym}} \bw_T)_{\bbm{L}^2(T)}\nonumber\\
&\qquad -(\bbm{s}(t){\cdot}\bd{n}_T,\bw_T)_{\bd{L}^2(\partial T)}+(\bbm{I}_T^{\hdg}(\bbm{s}(t)){\cdot}\bd{n}_T,\bw_T-\bw_{\partial T})_{\bd{L}^2(\dt)}\Big\}\nonumber\\&\qquad-s_{\Ms}(\hat{\bd{I}}{}_{\Ms}^{\hho}(\bd{v}(t)),\hat{\bw}_{\Ms})+(\Pi^k_{\Ff}(p(t))\bd{n}_\G,\bw_{\Fs})_{L^2(\Gamma)}\nonumber\\
& \quad=\sum_{T\in\Tf}\Big\{-(\bd{\Pi}^k_{\dt}(\bbm{s}(t){\cdot}\bd{n}_T),\bw_T)_{\bd{L}^2(\partial T)}+(\bbm{I}_T^{\hdg}(\bbm{s}(t)){\cdot}\bd{n}_T,\bw_T-\bw_{\partial T})_{\bd{L}^2(\dt)}\Big\}\nonumber\\&\qquad-s_{\Ms}(\hat{\bd{I}}{}_{\Ms}^{\hho}(\bd{v}(t)),\hat{\bw}_{\Ms})+(\bd{\Pi}^k_{\F^\G}(p(t)\bd{n}_\G),\bw_{\Fs})_{\bd{L}^2(\Gamma)},\label{eqs:18}
\end{align}
with an integration by parts  and the definition of $\bbm{g}_{\Ts}^{\rm{sym}}$ from \eqref{g-elastic} in the second step, and the $L^2$-orthogonality of $\bd{\Pi}_T^{k'}$ and  the definition \eqref{hdgs.1b} of $\bbm{I}_T^{\hdg}$  in the last step.
Since $\bbPi^k_{\dt}(\bbm{s}(t))$ and $\bw_{\dt}$ are single-valued and since $\bw_F =\bd{0}$ for all $F\in \cF_h^{\sd \sc{s}}$, we have    
\begin{align*}
\sum_{T\in\Ts}(\bd{\Pi}^k_{\dt}(\bbm{s}(t){\cdot}\bd{n}_T),\bw_{\dt})_{\bd{L}^2(\partial T)}=(\bd{\Pi}^k_{\cF^\G}(\bbm{s}(t){\cdot}\bd{n}_\G),\bw_{\Fs})_{\bd{L}^2(\Gamma)}=(\bd{\Pi}^k_{\F^\G}(p(t)\bd{n}_\G),\bw_{\Fs})_{\bd{L}^2(\Gamma)},
\end{align*}
where we also used that $\bd{n}_T|_{{\dt}\cap\Gamma} =  \bd{n}_\G$, and the coupling condition \eqref{coupling2}.
This in \eqref{eqs:18} and the observation that $(\bbPi^k_{\dt}(\bbm{s}(t)-\bbPi_T^k(\bbm{s}(t))|_{\dt}){\cdot}\bd{n}_T \in \bd{P}^k(\cal{F}_{\dt})$ result in
\begin{align}
&(\partial_t\be_{\Ts}(t),\bw_{\Ts})_{\bd{L}^2(\rho^\sc{s};\domain{s})}+(\etas(t),\bbm{g}_{\Ts}^{\rm{sym}}(\hat{\bw}_{\Ms}))_{\bbm{L}^2(\domain{s})}+s_{\Ms}(\ehs(t),\hat{\bw}_{\Ms})-(e_{\Ff}(t)\bd{n}_\G,\bw_{\Fs})_{\bd{L}^2(\Gamma)}\nonumber\\
&\quad =\sum_{T\in\Ts}((\bbPi^k_{\dt}(\bbm{s}(t))-\bbm{I}_T^{\hdg}(\bbm{s}(t))){\cdot}\bd{n}_T,\bw_{\dt}-\bw_T)_{\bd{L}^2(\partial T)}-s_{\Ms}(\hat{\bd{I}}{}_{\Ms}^{\hho}(\bd{v}(t)),\hat{\bw}_{\Ms})\nonumber\\&\quad =\sum_{T\in\Ts}((\bbm{s}(t)-\bbm{I}_T^{\hdg}(\bbm{s}(t))){\cdot}\bd{n}_T,\bd{I}^k_{\dt}(\bw_{\partial T}-\bw_T))_{\bd{L}^2(\partial T)}-s_{\Ms}(\hat{\bd{I}}{}_{\Ms}^{\hho}(\bd{v}(t)),\hat{\bw}_{\Ms}).\label{erroreq2}
\end{align}
 The combination of \eqref{erroreqn1} and \eqref{erroreq2} leads, for all $\hat{q}_{\Mf}\in\UhatfD$ and all $\hat{\bw}_{\Ms}\in\UhatsD$, to
\begin{align}
&(\partial_te_{\Tf}(t),q_{\Tf})_{L^2(\frac{1}{\kappa};\domain{f})}+(\etaf(t),\bG_{\Tf}(\hat{q}_{\Mf}))_{L^2(\domain{f})}+s_{\Mf}(\hat{e}_{\Mf}(t),\hat{q}_{\Mf})+(\be_{\Fs}(t){\cdot}\bd{n}_\G,q_{\Ff})_{L^2(\Gamma)}\nonumber\\&\quad+(\partial_t\be_{\Ts}(t),\bw_{\Ts})_{\bd{L}^2(\rho^\sc{s};\domain{s})}+(\etas(t),\bbm{g}_{\Ts}^{\rm{sym}}(\hat{\bw}_{\Ms}))_{\bbm{L}^2(\domain{s})}+s_{\Ms}(\ehs(t),\hat{\bw}_{\Ms})-(e_{\Ff}(t)\bd{n}_\G,\bw_{\Fs})_{\bd{L}^2(\Gamma)}\nonumber\\&=:\psi_{\Mf}((\bd{m}(t),p(t));\hat{q}_{\Mf})+\psi_{\Ms}((\bbm{s}(t),\bd{v}(t));\hat{\bw}_{\Ms}),\label{erroreq:combined}
\end{align}
where the linear functionals $\psi_{\Mf}((\bd{m}(t),p(t));\cdot)\in(\UhatfD)'$ and $\psi_{\Ms}((\bbm{s}(t),\bd{v}(t));\cdot)\in(\UhatsD)'$ denote the consistency errors such that
\begin{align*}
\psi_{\Mf}((\bd{m}(t),p(t));\hat{q}_{\Mf})&\!:=\!\sum_{T\in\Tf}((\bd{m}(t)-\bd{I}_T^{\hdg}(\bd{m}(t))){\cdot}\bd{n}_T,\Pi^k_{\dt}(q_{\partial T}-q_T))_{L^2(\partial T)}-s_{\Mf}(\hat{I}_{\Mf}^{\hho}(p(t)),\hat{q}_{\Mf}),\\
\psi_{\Ms}((\bbm{s}(t),\bd{v}(t));\hat{\bw}_{\Ms})&\!:=\! \sum_{T\in\Ts}((\bbm{s}(t)-\bbm{I}_T^{\hdg}(\bbm{s}(t))){\cdot}\bd{n}_T,\bd{\Pi}^k_{\dt}(\bw_{\partial T}-\bw_T))_{\bd{L}^2(\partial T)}-s_{\Ms}(\hat{\bd{I}}{}_{\Ms}^{\hho}(\bd{v}(t)),\hat{\bw}_{\Ms}),
\end{align*}
for all $\hat{q}_{\Mf}\in \UhatfD$ and all $\hat{\bw}_{\Ms}\in\UhatsD$.\\

\medskip
\noindent (2) Stability. Choosing $\hat{q}_{\Mf}:=\ehf(t)$ and $\hat{\bw}_{\Ms}:=  \ehs(t)$  in the error equation \eqref{erroreq:combined} for all $t\in \ol{J}$, and using \eqref{id1} and \eqref{id2} with $\br_{\Tf}:=\etafT(t)$ and $\bbm{b}_{\Ts}:=\etasT(t)$, we obtain
\begin{align}
&\frac{1}{2}\frac{d}{dt}\left\{\|e_{\Tf}(t)\|^2_{L^2(\frac{1}{\kappa};\domain{f})}+\|\be_{\Ts}(t)\|^2_{\bd{L}^2(\rho^\sc{s};\domain{s})}+\|\etaf(t)\|^2_{\bd{L}^2(\rho^\sc{f};\domain{f})}+\|\etas(t)\|^2_{\bbm{L}^2(\bbm{C}^{-1};\domain{s})}\right\}\nonumber\\
&\;+s_{\Mf}(\ehf(t),\ehf(t))+s_{\Ms}(\ehs(t),\ehs(t))=(\partial_t \bd{m}(t)-\bd{I}_{\Tf}^{\hdg}(\partial_t \bd{m}(t)),\etaf(t))_{\bd{L}^2(\rho^\sF;\domain{f})}\nonumber\\
&\quad+(\partial_t \bbm{s}(t)-\bbm{I}_{\Ts}^{\hdg}(\partial_t \bbm{s}(t)),\etas(t))_{\bbm{L}^2(\bbm{C}^{-1};\domain{s})}+\psi_{\Mf}((\bd{m}(t),p(t));\ehf(t))+\psi_{\Ms}((\bbm{s}(t),\bd{v}(t));\ehs(t)).\label{2.13}
\end{align}
This is the critical step where the errors at the interface cancel, as do the interface terms in the energy balance. Integrating from $0$ to $t$, and noticing that $e_{\Tf}(0)=0$, $ \etaf(0) = \bd{0}=\be_{\Ts}(0)$, and  $\etas(0) = \rule{0.3pt}{0.66em}\hspace{-0.075cm}0$ owing to the initial conditions lead to
\begin{align*}
\frac{1}{2} &\Big\{\|e_{\Tf}(t)\|^2_{L^2(\frac{1}{\kappa};\domain{f})}+\|\be_{\Ts}(t)\|^2_{\bd{L}^2(\rho^\sc{s};\domain{s})}+\|\etaf(t)\|^2_{\bd{L}^2(\rho^\sc{f};\domain{f})}+\|\etas(t)\|^2_{\bbm{L}^2(\bbm{C}^{-1};\domain{s})}\Big\} \\
&+\|\ehf\|_{L^2(J_t;\rm{S}^{\sF})}^2+\|\ehs\|_{L^2(J_t;\rm{S}^{\sS})}^2  \\
={}&  \int_{J_t} \Big\{(\partial_t \bd{m}(\tau)-\bd{I}_{\Tf}^{\hdg}(\partial_t \bd{m}(\tau)),\etaf(\tau))_{\bd{L}^2(\rho^\sF;\domain{f})} 
+(\partial_t \bbm{s}(\tau)-\bbm{I}_{\Ts}^{\hdg}(\partial_t \bbm{s}(\tau)),\etas(\tau))_{\bbm{L}^2(\bbm{C}^{-1};\domain{s})}\\
&+\psi_{\Mf}((\bd{m}(\tau),p(\tau));\ehf(\tau))+\psi_{\Ms}((\bbm{s}(\tau),\bd{v}(\tau));\ehs(\tau))\Big\}\,\rm{d\tau}.
\end{align*}
H\"older's inequality in time for the first two terms on the right hand-side, the Cauchy-Schwarz inequality in time for the last two terms, and  Young's inequality  imply that
\begin{align*}
&\frac{1}{2}\Big\{\|e_{\Tf}(t)\|^2_{L^2(\frac{1}{\kappa};\domain{f})}+\|\be_{\Ts}(t)\|^2_{\bd{L}^2(\rho^\sc{s};\domain{s})}+\|\etaf(t)\|^2_{\bd{L}^2(\rho^\sc{f};\domain{f})}+\|\etas(t)\|^2_{\bbm{L}^2(\bbm{C}^{-1};\domain{s})}\Big\}\\
&\quad+\frac{3}{4}\Big\{\|\ehf\|_{L^2(J_t;\rm{S}^{\sF})}^2+\|\ehs\|_{L^2(J_t;\rm{S}^{\sS})}^2\Big\}\\&\leq \|\partial_t \bd{m}-\bd{I}_{\Tf}^{\hdg}(\partial_t \bd{m})\|^2_{L^1(J_t;\bd{L}^2(\rho^\sF;\domain{f}))}+\|\partial_t \bbm{s}-\bbm{I}_{\Ts}^{\hdg}(\partial_t \bbm{s})\|^2_{L^1(J_t;\bbm{L}^2(\bbm{C}^{-1};\domain{s}))}+\frac{1}{4}\Big\{\|\etaf\|^2_{C^0(\ol{J}_t;\bd{L}^2(\rho^\sF;\domain{f}))}\\
&\quad+\|\etas\|^2_{C^0(\ol{J}_t;\bbm{L}^2(\bbm{C}^{-1};\domain{s}))}\Big\}+\|\psi_{\Mf}((\bd{m},p);\cdot)\|^2_{L^2(J_t;(\text{S}^\sF)')}+\|\psi_{\Ms}((\bbm{s},\bd{v});\cdot)\|^2_{L^2(J_t;(\text{S}^{\sS})')}.
\end{align*}

\medskip
\noindent (3) Bound on consistency errors. Rewriting the definition of $\psi_{\Mf}$ leads to
\begin{align*}
\psi_{\Mf}((\bd{m}(t),p(t));\hat{q}_{\Mf})&=\sum_{T\in\Tf}((\tau^\sc{f}_{T})^{-\frac{1}{2}}(\bd{m}(t)-\bd{I}_{T}^{\hdg}(\bd{m}(t))){\cdot}\bd{n}_T,(\tau^\sc{f}_{T})^{\frac{1}{2}}\Pi^k_{\dt}(q_{\partial T}-q_T))_{L^2(\partial T)}\\&\quad-s_{\Mf}(\hat{I}_{\Mf}^{\hho}(p(t)),\hat{q}_{\Mf}).
\end{align*}
The Cauchy-Schwarz inequality and the definition \eqref{stab-op-F} of $S_{\dt}$  imply that
\begin{align*}
&|\psi_{\Mf}((\bd{m}(t),p(t));\hat{q}_{\Mf})|\\&\quad\lesssim	\bigg\{\sum_{T\in\Tf}\Big(\tih_T^{\alpha}\|\bd{B}_{\dt}(\bd{m}(t)){\cdot}\bd{n}_T\|^2_{L^2(\frac{1}{\cred{\zeta}^\sc{f}};\dt)}+s_T^{\sF}(\hat{I}_T^{\hho}(p(t)),\hat{I}_T^{\hho}(p(t)))\Big)\bigg\}^{\frac{1}{2}}s_{\Mf}(\hat{q}_{\Mf},\hat{q}_{\Mf})^{\frac{1}{2}}.
\end{align*}
\cred{where $s_T^{\sc{f}}$ is the local contribution to $s_{\Mf}$ defined in~\eqref{stab-F}.} Using the $L^2$-stability of $\Pi^k_{\dt}$, a multiplicative trace inequality, and the Poincar\'e inequality on $T$, we infer that 
\begin{align*} 
s_T^{\sF}(\hat{I}_T^{\hho}(p(t)),\hat{I}_T^{\hho}(p(t)))&\leq \tau^\sc{f}_{T}\|p(t)-\Pi_T^{k'}(p(t))\|_{L^2(\dt)}^2\\
&\lesssim \tau^\sc{f}_{T}(h_T^{-1}\|p(t)-\Pi_T^{k'}(p(t))\|_{L^2(T)}^2+ h_T\|\nabla(p(t)-\Pi_T^{k'}(p(t)))\|_{L^2(T)}^2)\\
&\lesssim \tau^\sc{f}_{T}h_T\|\nabla(p(t)-\Pi_T^{k'}(p(t)))\|_{L^2(T)}^2 = h_T\tih^{-\alpha}_T\|\nabla B_T(p(t)))\|_{\bd{L}^2(\cred{\zeta}^\sc{f};T)}^2,
\end{align*}
where we also used the definition of $\tau^\sc{f}_{T}$ from \eqref{stab-param-F}. Hence, we obtain
$\|\psi_{\Mf}((\bd{m}(t),p(t));\cdot)\|_{(\text{S}^{\sF})'}\lesssim |(\bd{m}(t),p(t))|_{\#,\sF}, $ and similarly, 
$\|\psi_{\Ms}((\bbm{s}(t),\bd{v}(t));\cdot)\|_{(\text{S}^{\sS})'}\lesssim |(\bbm{s}(t),\bd{v}(t))|_{\#,\sS}.$ \\

\medskip
\noindent(4) Conclusion: Combining Step (2) and Step (3) gives
\begin{align*}
&\frac{1}{2}\Big\{\|e_{\Tf}(t)\|^2_{L^2(\frac{1}{\kappa};\domain{f})}+\|\be_{\Ts}(t)\|^2_{\bd{L}^2(\rho^\sc{s};\domain{s})}+\|\etaf(t)\|^2_{\bd{L}^2(\rho^\sc{f};\domain{f})}+\|\etas(t)\|^2_{\bbm{L}^2(\bbm{C}^{-1};\domain{s})}\Big\}\\
&\quad+\frac{3}{4}\Big\{\|\ehf\|_{L^2(J_t;\rm{S}^{\sF})}^2+\|\ehs\|_{L^2(J_t;\rm{S}^{\sS})}^2\Big\}\\&\lesssim \|\partial_t \bd{m}-\bd{I}_{\Tf}^{\hdg}(\partial_t \bd{m})\|^2_{L^1(J_t;\bd{L}^2(\rho^\sF;\domain{f}))}+\|\partial_t \bbm{s}-\bbm{I}_{\Ts}^{\hdg}(\partial_t \bbm{s})\|^2_{L^1(J_t;\bbm{L}^2(\bbm{C}^{-1};\domain{s}))}+\frac{1}{4}\Big\{\|\etaf\|^2_{C^0(\ol{J}_t;\bd{L}^2(\rho^\sF;\domain{f}))}\\&\quad+\|\etas\|^2_{C^0(\ol{J}_t;\bbm{L}^2(\bbm{C}^{-1};\domain{s}))}\Big\}+\|(\bd{m},p)\|^2_{L^2(J_t;\#,\sF)}+\|(\bbm{s},\bd{v})\|^2_{L^2(J_t;\#,\sS)}.
\end{align*}
Since the left-hand side evaluated at any $t\in J_t$ is bounded by the right-hand side, we have 
\begin{align*}
&\frac{1}{2}\Big\{\|e_{\Tf}\|^2_{C^0(\ol{J}_t;L^2(\frac{1}{\kappa};\domain{f}))}+\|\be_{\Ts}\|^2_{C^0(\ol{J}_t;\bd{L}^2(\rho^\sc{s};\domain{s}))}\Big\}+\frac{1}{4}\Big\{\|\etaf\|^2_{C^0(\ol{J}_t;\bd{L}^2(\rho^\sc{f};\domain{f}))}+\|\etas\|^2_{C^0(\ol{J}_t;\bbm{L}^2(\bbm{C}^{-1};\domain{s}))}\Big\}\\
&\quad\lesssim \|\partial_t \bd{m}-\bd{I}_{\Tf}^{\hdg}(\partial_t \bd{m})\|^2_{L^1(J_t;\bd{L}^2(\rho^\sF;\domain{f}))}+\|\partial_t \bbm{s}-\bbm{I}_{\Ts}^{\hdg}(\partial_t \bbm{s})\|^2_{L^1(J_t;\bbm{L}^2(\bbm{C}^{-1};\domain{s}))}+\|(\bd{m},p)\|^2_{L^2(J_t;\#,\sF)}\\&\quad+\|(\bbm{s},\bd{v})\|^2_{L^2(J_t;\#,\sS)},
\end{align*}
where we dropped the positive stabilization terms on the left-hand side for simplicity.
This and the triangle inequality  result in (\ref{thm:err-est}).
\end{proof}
\begin{remark}[Gronwall's lemma]
Our proof of Theorem~\ref{thm:err-est} avoids the need to invoke Gronwall's lemma. The key argument is the fact that the consistency errors are controlled by the stabilization semi-norms of the discrete errors. This is the main reason for choosing the H$^+$ interpolation operators from \cite{DS_2021} instead of the usual $L^2$-projections for dG variables as in \cite{BDES_2021}.
\end{remark}
\begin{remark}[Convergence rates]\label{rem-1}
 If there are $\ell_1\in\{1,\dots,k+1\}$  and $\ell_2\in\{1,\dots,k^\prime+1\}$ such that $\bd{m} \in C^1(\overline{J};\bd{H}^{\ell_1}(\domain{f})),$ $ p\in C^0(\ol{J};H^{\ell_2}(\domain{f})) $ and $\bbm{s}\in C^1(\overline{J};\bbm{H}^{\ell_1}(\domain{s})), \bd{v}\in C^0(\ol{J}; \boldsymbol{H}^{\ell_2}(\domain{s}))$, we have
\begin{align}
&\|\bd{m}-\bmt\|_{C^{0}(\ol{J};\bd{L}^2(\rho^\sc{f};\domain{f}))}+\|p-p_{\Tf}\|_{C^{0}(\ol{J};L^2(\frac{1}{\kappa};\domain{f}))}+\|\bbm{s}-\bbm{s}_{\Ts}\|_{C^{0}(\ol{J};\bbm{L}^2(\bbm{C}^{-1};\domain{s}))} \nonumber\\&\quad+\|\bd{v}-\bd{v}_{\Ts}\|_{C^{0}(\ol{J};\bd{L}^2(\rho^\sc{s};\domain{s}))}\lesssim \mathcal{O}(\tih^{\frac{\alpha}{2}} h^{\ell_1-\frac{1}{2}}+\tih^{-\frac{\alpha}{2}} h^{\ell_2-\frac{1}{2}}).\label{conv-rates}
\end{align}
 In the case where $\ell_1=k+1$ and $\ell_2=k^\prime+1$, this gives  $\mathcal{O}(h^{k+\frac{1}{2}})$ for $\cal{O}(1)$-stabilization (i.e., $\alpha=0$ in \eqref{stab-param-F} and \eqref{stab-param-S}), and $\mathcal{O}(h^{k+1}+h^{k^\prime})$ for $\cal{O}(\frac{1}{h})$-stabilization (i.e., $\alpha=1$ in \eqref{stab-param-F} and \eqref{stab-param-S}), that is, $\mathcal{O}(h^{k})$ for equal-order and $\mathcal{O}(h^{k+1})$ for mixed-order. Finally, notice from Step (4) of the proof of Theorem~\ref{thm:err-est} that we can also bound the stabilization semi-norms of the discrete errors as
\begin{align}
\|\hat{p}_{\Mf}\|_{L^2(J;\text{S}^\sF)}+\|\hat{\bd{v}}_{\Ms}\|_{L^2(J;\text{S}^\sS)}\lesssim \mathcal{O}(\tih^{\frac{\alpha}{2}} h^{\ell_1-\frac{1}{2}}+\tih^{-\frac{\alpha}{2}} h^{\ell_2-\frac{1}{2}}).
\end{align}
Moreover, as shown in \cite{EK_2024}, the above rates for $\alpha=0$ can be improved to $\mathcal{O}(h^{k+1})$ on simplices. \cred{A discussion of the above regularity assumption on the exact solution for smooth data  and computational domain can be found in \cite[Section~8.5.1]{allaire2007}.} 
\end{remark}

\begin{remark}[Initial condition]
For the dG variables, the initial conditions $ \bd{m}_{\Tf}(\bd{0}) = \bd{I}_{\Tf}^{\hdg}(\bd{m}_0)$ and  $ \bbm{s}_{\Ts}(0) = \bbm{I}_{\Ts}^{\hdg}(\bbm{s}_0)$ are chosen for simplicity. Instead, we can set the initial conditions to  $ \bd{m}_{\Tf}(\bd{0}) = \bd{\Pi}_{\Tf}^{k}(\bd{m}_0)$ and  $ \bbm{s}_{\Ts}(0) = \bbPi_{\Ts}^{k}(\bbm{s}_0)$ using the usual $L^2$-projections, and this will only lead to the two extra contributions $\|\bd{I}_{\Tf}^{\hdg}(\bd{m}_0)-\bd{\Pi}_{\Tf}^{k}(\bd{m}_0)\|_{\bd{L}^2(\rho^\sc{f};\domain{f})}$ and $\|\bbm{I}_{\Ts}^{\hdg}(\bbm{s}_0)-\bbPi_{\Ts}^{k}(\bbm{s}_0)\|_{\bd{L}^2(\rho^\sc{s};\domain{s})}$ in the error estimate. The triangle inequalities $\|\bd{I}_{\Tf}^{\hdg}(\bd{m}_0)-\bd{\Pi}_{\Tf}^{k}(\bd{m}_0)\|_{\bd{L}^2(\rho^\sc{f};\domain{f})}\leq \|\bd{I}_{\Tf}^{\hdg}(\bd{m}_0)-\bd{m}_0\|_{\bd{L}^2(\rho^\sc{f};\domain{f})}+\|\bd{m}_0-\bd{\Pi}_{\Tf}^{k}(\bd{m}_0)\|_{\bd{L}^2(\rho^\sc{f};\domain{f})}$ and $\|\bbm{I}_{\Ts}^{\hdg}(\bbm{s}_0)-\bbPi_{\Ts}^{k}(\bbm{s}_0)\|_{\bd{L}^2(\rho^\sc{s};\domain{s})}\leq \|\bbm{I}_{\Ts}^{\hdg}(\bbm{s}_0)-\bbm{s}_0\|_{\bd{L}^2(\rho^\sc{s};\domain{s})}+\|\bbm{s}_0-\bbPi_{\Ts}^{k}(\bbm{s}_0)\|_{\bd{L}^2(\rho^\sc{s};\domain{s})}$ followed by interpolation estimates show that these additional terms converge optimally.
\end{remark}
\begin{remark}[Equal-order setting with HHO stabilization]\label{rem-2}
For equal-order setting with the high-order HHO stabilization, the stabilization operator in the acoustic domain  is defined as
\begin{equation}
S_{\partial T}(\hat{p}_T) := \Pi_{\partial T}^k (\delta_{\partial T}(\hat{p}_T) + 
((I-\Pi_T^k)R_T(0,\delta_{\partial T}(\hat{p}_T)))|_{\partial T})\qquad\forall \hat{p}_T\in\hat{P}_T^k,\label{HHO-stab-F}
\end{equation}
and in the  elastic domain as
\begin{equation}
\bd{S}_{\partial T} (\hat{\bd{v}}_T) := \bd{\Pi}_{\partial T}^k 
(\bd{\delta}_{\partial T} (\hat{\bd{v}}_T) + ((\bd{I}-\bd{\Pi}_T^k)
\bd{R}_T(\bd{0},\bd{\delta}_{\partial T} (\hat{\bd{v}}_T)))|_{\partial T})\qquad \forall\hat{\bd{v}}_T\in\hat{\bd{V}}{}_T^k,\label{HHO-stab-S}
\end{equation}
using suitable reconstruction operators $R_T:\hat{P}^k_T\to P^{k+1}(T)$ and $\bd{R}_T:\hat{\bd{V}}{}_T^k\to \bd{P}^{k+1}(T)$ respectively (see, e.g., \cite{DE_2015}). These choices combined with $\mathcal{O}(\frac{1}{h})$-stabilization improve the convergence rate to  $\mathcal{O}(h^{k+1})$ even on polyhedra in contrast to the stabilization operators \eqref{stab-op-F} and \eqref{stab-op-S}. However, as discussed in \cite[Sec.~7.1]{EK_2024}, this choice of stabilization is generally suitable only for implicit time-stepping schemes.
\end{remark}
\begin{remark}[Comparison with \cite{BDES_2021}]
The arguments in \cite{BDES_2021} for uncoupled acoustic or elastic waves are based on the $L^2$-orthogonal projection for the dual variable instead of the $H^+$ interpolation operator used herein. The analysis in \cite{BDES_2021} requires to bound the consistency errors $\psi_{\Mf}((\bd{m}(t),p(t));\cdot)$ and $\psi_{\Ms}((\bbm{s}(t),\bd{v}(t));\cdot)$ by means of the HHO norm, instead of the stabilization seminorm as done in the proof of Theorem~\ref{thm:err-est}. Both approaches (using either the $L^2$-projection or the H$^+$ interpolation operator for the dual variable) lead to the same convergence rates. However, in the context of explicit time-schemes, the analysis crucially hinges on bounding the consistency errors in terms of the stabilization seminorms only, as highlighted in \cite{EK_2024}. This is why we preferred to use the H$^+$ interpolation operator in the present work although its focus is on the time-continuous case.  
\end{remark}

\section{Algebraic realization of the semi-discrete problem}\label{sec::algebraic_realization}

This section details the algebraic realization of the space semi-discrete system (\ref{HHO1})-(\ref{HHO2}). For the acoustic wave equation, we define the dimensions of the following polynomial spaces:
\begin{equation}
N_{\Tf}^{k^\prime} := \dim(P^{k^\prime}(\Tf)), \quad N_{\Ff}^{k} := \dim(P^{k}(\Ff)), \quad M_{\Tf}^{k} := \dim(\bd{M}^{k}(\Tf)),
\end{equation} 
and denote the respective bases as
\begin{equation}
\left\{\varphi_i\right\}_{1 \leq i \leq N_{\Tf}^{k^\prime}}, \quad 
\left\{\psi_i \right\}_{1 \leq i \leq N_{\Ff}^{k}}, \quad 
\left\{\bd{\zeta}_k\right\}_{1 \leq k \leq M_{\Tf}^{k}}.    
\end{equation}
The basis $\left\{\bd{\zeta}_k\right\}_{1 \leq k \leq M_{\Tf}^{k}}$ is constructed as products of Cartesian basis vectors in $\bbm{R}^d$ with scalar-valued basis functions of $P^{k}(\Tf)$. Let $(\dofs{P}{T}{f}(t), \dofs{P}{F}{f}(t)) \in \bbm{R}^{N_{\Tf}^{k^\prime}} \times \bbm{R}^{N_{\Ff}^{k}}$ and $\dofs{M}{T}{f}(t) \in \bbm{R}^{M_{\Tf}^{k}}$ be the time-dependent component vectors of $(p_{\Tf}(t), p_{\Ff}(t)) \in \wh{P}^{k}_0(\Mf)$ and $\bd{m}_{\Tf}(t) \in \bd{M}^{k}(\Tf)$, respectively, in these bases. Let $\mass{\rho^{\sc{f}}}{f}$ and $\mass{\frac{1}{\kappa}}{f}$ denote the mass matrices associated with the inner products in $\bd{L}^2(\rho^\sc{f}; \Omega^\sc{f})$ and \(L^2(\frac{1}{\kappa}; \Omega^\sc{f})\), respectively, using the above bases. Let $\grad{}{T}{f} \in \bbm{R}^{M_{\Tf}^{k} \times N_{\Tf}^{k^\prime}}$ and $\cred{\gradd{}{T}{F}{f}} \in \bbm{R}^{M_{\Tf}^{k} \times N_{\Ff}^{k}}$ denote the two blocks of the gradient reconstruction matrix, so that
\begin{equation}
(\bd{g}_{\T}(\hat{p}_{\Mf}(t)),\bd{r}_{\Tf})_{{\bd{L}^2(\domain{f})}} = (\grad{}{T}{f}\dofs{\rm{P}}{T}{f} + \cred{\gradd{}{T}{F}{f}}\dofs{\rm{P}}{F}{f})^{\cred{\top}} {R}_{\T^\sc{f}},
\end{equation}
for all $\bd{r}_{\Tf} \in \bd{M}^{k}(\Tf)$ with components ${R}_{\T^\sc{f}} \in \bbm{R}^{M_{\Tf}^{k}}$. Finally, let \(\cred{\stabdiag{f}{T}}, \stab{f}{T}{F}, \cred{\stab{f}{T}{F}^{\top}}, \cred{\stabdiag{f}{F}{}}\) represent the four blocks of the matrix associated with the stabilization bilinear form $s_{\cal{M}^\sc{f}}$ defined in (\ref{stab-F}).

For the elastic wave equation, we define the dimensions of the following polynomial spaces:
\begin{equation}
L_{\Ts}^{k^\prime} := \dim(\bd{{V}}^{k^\prime}(\Ts)), \quad L_{\Fs}^{k} := \dim(\bd{{V}}^{k}(\Fs)), \quad H_{\Ts}^{k} := \dim(\bbm{S}_{\rm{sym}}^k(\Ts)),    
\end{equation}
and denote the respective bases as
\begin{equation}
\left\{\bd{\phi}_i\right\}_{1 \leq i \leq L_{\Ts}^{k^\prime}}, \quad 
\left\{\bd{\theta}_i \right\}_{1 \leq i \leq L_{\Fs}^{k}}, \quad 
\left\{\bbm{Y}_k\right\}_{1 \leq k \leq H_{\Ts}^{k}}.    
\end{equation}
The basis $\left\{\bbm{Y}_k\right\}_{1 \leq k \leq H_{\Ts}^{k}}$ is naturally built as tensor products of basis vectors in $\bbm{R}_{\text{sym}}^{d \times d}$ and scalar-valued basis functions in $P^k(\Ts)$. Let $(\dofs{V}{T}{s}(t), \dofs{V}{F}{s}(t)) \in \bbm{R}^{L_{\Ts}^{k^\prime}} \times \bbm{R}^{L_{\Fs}^{k}}$ and $\dofs{S}{T}{s} \in \bbm{R}^{H_{\Ts}^{k}}$ represent the time-dependent component vectors of $(\bd{v}_{\Ts}(t), \bd{v}_{\Fs}(t)) \in \bd{\wh{V}}^{k}_0(\Ms)$ and $\bbm{s}_{\Ts}(t) \in \bbm{S}_{\rm{sym}}^k(\Ts)$, respectively, in these bases. Let $\mass{\bbm{C}^{-1}}{s}$ and $\mass{\rho^{\sc{s}}}{s}$ denote the mass matrices for the inner products in $\bbm{L}^2(\bbm{C}^{-1};\Omega^\sc{s})$ and $\bd{L}^2(\rho^\sc{s}; \Omega^\sc{s})$, respectively, in these bases. Let $\strain{}{T^{\sc{s}}} \in \bbm{R}^{H_{\Ts}^{k} \times L_{\Ts}^{k^\prime}}$ and $\cred{\strain{}{T^{\sc{s}}F^{\sc{s}}}} \in \bbm{R}^{H_{\Ts}^{k} \times L_{\Fs}^{k}}$ denote the two blocks of the symmetric gradient reconstruction matrix, so that
\begin{equation}
(\bbm{g}^{\rm{sym}}_{\Ts}(\hat{\bd{v}}_{\Ms}(t)),\bbm{b}_{\Ts})_{{\bbm{L}^2(\domain{f})}} = (\strain{}{T^{\sc{s}}}\dofs{\rm{V}}{T}{s} + \cred{\strain{}{T^{\sc{s}}F^{\sc{s}}}}\dofs{\rm{V}}{F}{s})^{\cred{\top}} \rm{B}_{\T^\sc{s}}.
\end{equation}
for all $\bbm{b}_{\Ts} \in \bbm{S}_{\rm{sym}}^k(\Ts)$ with components $\rm{B}_{\T^\sc{s}} \in \bbm{R}^{H_{\Ts}^{k}}$. Finally, let $\cred{\stabdiag{s}{T}}, \stab{s}{T}{F}, \cred{\stab{s}{T}{F}^{\top}}, \cred{\stabdiag{s}{F}}$ denote the four blocks of the matrix associated with the stabilization bilinear form $s_{\cal{M}^\sc{s}}$ defined in (\ref{stab-S}).

Let $\cred{\coupling{}}$ be the matrix representing the coupling terms, so that 
\begin{equation}
(\bd{v}_{\Fs} \cdot \bd{n}_\Gamma, q_{\cal{F}^\sc{f}})_{{L^2(\Gamma)}} = \rm{Q}_{\F^\sc{f}}^{\cred{\top}} \cred{\coupling{}} \dofs{V}{F}{s}.
\end{equation}
for all $q_{\cal{F}^\sc{f}} \in P^{k}(\Ff)$ with components $\rm{Q}_{\F^\sc{f}} \in \bbm{R}^{N_{\Ff}^{k}}$ and all $\bd{v}_{\Fs} \in \bd{P}^k(\cal{\Fs})$ with components $\dofs{\rm{V}}{F}{s} \in \bbm{R}^{L_{\Fs}^{k}}$. Notice that $\cred{\coupling{}}$ is block-diagonal with a nonzero block only for all $F \in \cal{F}^{\Gamma}$. 

Altogether, the algebraic realization of (\ref{HHO1}) and (\ref{HHO2}) can be formulated in the following way:
\noindent For all $t \in \overline{J}$,
\begin{equation}
\resizebox{0.93\textwidth}{!}{%
$
\pmatset{4}{12pt} 
\pmatset{5}{10pt} 
\resizebox{!}{13.5ex}{ 
$
\begin{pmat}[{..|}]
\cred{\normalizecell{\mass{\rho^{\sc{f}}}{f}}} & 0      & 0 & 0      & 0      & 0 \cr
0      & \cred{\normalizecell{\mass{\frac{1}{\kappa}}{f}}} & 0 & 0      & 0      & 0 \cr
0      & 0      & 0 & 0      & 0      & 0 \cr \-
0      & 0      & 0 & \cred{\normalizecell{\mass{\bbm{C}^{-1}}{s}}} & 0      & 0 \cr
0      & 0      & 0 & 0      & \cred{\normalizecell{\mass{\rho^{\sc{s}}}{s}}} & 0 \cr
0      & 0      & 0 & 0      & 0      & 0 \cr
\end{pmat}
$
}
\dfrac{\rm{d}}{\rm{dt}}
\pmatset{4}{12pt} 
\pmatset{5}{10pt} 
\begin{pmat}[{}]
\normalizecell{\dofs{\rm{M}}{T}{f}} \cr
\normalizecell{\dofs{\rm{P}}{T}{f}} \cr
\normalizecell{\dofs{\rm{P}}{F}{f}} \cr \- 
\normalizecell{\dofs{\rm{S}}{T}{s}} \cr
\normalizecell{\dofs{\rm{V}}{T}{s}} \cr
\normalizecell{\dofs{\rm{V}}{F}{s}} \cr
\end{pmat} +
\pmatset{4}{12pt} 
\pmatset{5}{10pt} 
\begin{pmat}[{..|}]
0 & \normalizecell{\grad{}{T}{f}} & \cred{\normalizecell{\gradd{}{T}{F}{f}}} & 0 & 0 & 0 \cr
\normalizecell{-\grad{{\cred{\top}}}{T}{f}} & \cred{\normalizecell{\stabdiag{f}{T}}} & {\normalizecell{\stab{f}{T}{F}}} & 0 & 0 & 0 \cr
\cred{\normalizecell{-\gradd{{\top}}{T}{F}{f}}} & \cred{\normalizecell{\stab{f}{T}{F}^{\cred{\top}}}} & \cred{\normalizecell{\stabdiag{f}{F}}} & 0 & 0 & \normalizecell{\cred{\coupling{}}} \cr\-
0 & 0 & 0 & 0 & \normalizecell{\strain{}{T^{\sc{s}}}} & \cred{\normalizecell{\strain{}{T^{\sc{s}}F^{\sc{s}}}}} \cr
0 & 0 & 0 & {\normalizecell{-\strain{\cred{\top}}{T^{\sc{s}}}}} & \cred{\normalizecell{\stabdiag{s}{T}}} & \normalizecell{\stab{s}{T}{F}} \cr
0 & 0 & \cred{\normalizecell{-\coupling{{\top}}}} & \cred{\normalizecell{-\strain{{\top}}{T^{\sc{s}}F^{\sc{s}}}}} & \cred{\normalizecell{\stab{s}{T}{F}^{\cred{\top}}}} & \cred{\normalizecell{\stabdiag{s}{F}}} \cr
\end{pmat}
\pmatset{4}{12pt} 
\pmatset{5}{10pt} 
\begin{pmat}[{}]
\normalizecell{\dofs{\rm{M}}{T}{f}} \cr
\normalizecell{\dofs{\rm{P}}{T}{f}} \cr
\normalizecell{\dofs{\rm{P}}{F}{f}} \cr \- 
\normalizecell{\dofs{\rm{S}}{T}{s}} \cr
\normalizecell{\dofs{\rm{V}}{T}{s}} \cr
\normalizecell{\dofs{\rm{V}}{F}{s}} \cr
\end{pmat} =
\pmatset{4}{12pt} 
\pmatset{5}{10pt} 
\begin{pmat}[{}]
\normalizecell{0} \cr
\normalizecell{\cred{\dofs{F}{T}{f}}} \cr
\normalizecell{0} \cr 
\- 
\normalizecell{0} \cr
\normalizecell{\cred{\dofs{F}{T}{s}}} \cr
\normalizecell{0} \cr
\end{pmat} 
$.
}
\label{algebraic_coupling}
\vspace{0.1cm}
\end{equation}

Notice that, as shown in the discrete energy balance (\ref{discrete_energy}), the coupling between acoustic and elastic waves produces no energy, resulting in antisymmetric coupling matrices in (\ref{algebraic_coupling}). For convenience, we re-arrange the unknowns by grouping first the (elastic and acoustic) cell unknowns and then the (elastic and acoustic) face unknowns. Then, (\ref{algebraic_coupling}) rewrites as
\begin{equation}
\resizebox{0.93\textwidth}{!}{%
$
\pmatset{4}{12pt} 
\pmatset{5}{10pt} 
\resizebox{!}{13.5ex}{
$
\begin{pmat}[{..|}]
\cred{\normalizecell{\mass{\rho^{\sc{f}}}{f}}} & 0      & 0 & 0      & 0      & 0 \cr
0      & \cred{\normalizecell{\mass{\frac{1}{\kappa}}{f}}} & 0 & 0      & 0      & 0 \cr
0      & 0      & 0 & 0      & 0      & 0 \cr \-
0      & 0      & 0 & \cred{\normalizecell{\mass{\bbm{C}^{-1}}{s}}} & 0      & 0 \cr
0      & 0      & 0 & 0      & \cred{\normalizecell{\mass{\rho^{\sc{s}}}{s}}} & 0 \cr
0      & 0      & 0 & 0      & 0      & 0 \cr
\end{pmat}
$
}
\dfrac{\rm{d}}{\rm{dt}}
\pmatset{4}{12pt} 
\pmatset{5}{10pt} 
\begin{pmat}[{}]
\normalizecell{\dofs{M}{T}{f}} \cr
\normalizecell{\dofs{P}{T}{f}} \cr \-
\normalizecell{\dofs{S}{T}{s}} \cr
\normalizecell{\dofs{V}{T}{s}} \cr \-
\normalizecell{\dofs{P}{F}{f}} \cr 
\normalizecell{\dofs{V}{F}{s}} \cr
\end{pmat} +
\pmatset{4}{12pt} 
\pmatset{5}{10pt} 
\begin{pmat}[{.|.|}]
0 & \normalizecell{\grad{}{T}{f}} & 0 & 0 & \cred{\normalizecell{\gradd{}{T}{F}{f}}} & 0 \cr
\normalizecell{-\grad{{\cred{\top}}}{T}{f}} & \cred{\normalizecell{\stabdiag{f}{T}}} & 0 & 0 & \normalizecell{\stab{f}{T}{F}} & 0  \cr \-
0 & 0 & 0 & \normalizecell{\strain{}{T^{\sc{s}}}} & 0 & \cred{\normalizecell{\strain{}{T^{\sc{s}}F^{\sc{s}}}}}\cr
0 & 0 & \normalizecell{-\strain{{\cred{\top}}}{T^{\sc{s}}}} & \cred{\normalizecell{\stabdiag{s}{T}}} & 0 & \normalizecell{\stab{s}{T}{F}} \cr \- 
\cred{\normalizecell{-\gradd{{\cred{\top}}}{T}{F}{f}}} & \cred{\normalizecell{\stab{f}{T}{F}^{\cred{\top}}}} & 0 & 0 & \cred{\normalizecell{\stabdiag{f}{F}}} & \normalizecell{\cred{\coupling{}}} \cr
0 & 0 & \cred{\normalizecell{-\strain{{\top}}{T^{\sc{s}}F^{\sc{s}}}}} & \cred{\normalizecell{\stab{s}{T}{F}^{\cred{\top}}}} & \cred{\normalizecell{-\coupling{{\top}}}} & \cred{\normalizecell{\stabdiag{s}{F}}} \cr
\end{pmat}
\pmatset{4}{12pt} 
\pmatset{5}{10pt} 
\begin{pmat}[{}]
\normalizecell{\dofs{M}{T}{f}} \cr
\normalizecell{\dofs{P}{T}{f}} \cr \-
\normalizecell{\dofs{S}{T}{s}} \cr
\normalizecell{\dofs{V}{T}{s}} \cr \-
\normalizecell{\dofs{P}{F}{f}} \cr 
\normalizecell{\dofs{V}{F}{s}} \cr
\end{pmat} =
\pmatset{4}{12pt} 
\pmatset{5}{10pt} 
\begin{pmat}[{}]
\normalizecell{0}  \cr
\normalizecell{\dofs{F}{T}{f}} \cr \-
\normalizecell{0}  \cr 
\normalizecell{\dofs{F}{T}{s}} \cr \-
\normalizecell{0}  \cr
\normalizecell{0}  \cr
\end{pmat} 
$.  
}
\label{algebraic_system}
\end{equation}    
This system can be rewritten in the following compact form: 
\begin{equation}
\pmatset{4}{12pt} 
\pmatset{5}{12pt}
\begin{pmat}[{}]
\cred{\normalizecell{\mass{}{}}} & 0 \cr
0 & 0  \cr 
\end{pmat}
\partial_t\bd{\rm{U}} + 
\pmatset{4}{12pt} 
\pmatset{5}{10pt}
\begin{pmat}[{}]
\normalizecell{\cred{\cal{K}_{\cal{T}}}} & \normalizecell{\cal{K}_{\cal{TF}}} \cr
\normalizecell{\cal{K}_{\cal{FT}}} & \normalizecell{\cred{\cal{K}_{\cal{F}}}}  \cr 
\end{pmat}\bd{\rm{U}} = \bd{\rm{F}},
\label{compact_AR}
\end{equation}
where $\bd{\rm{U}}$ is the vector of unknowns, the blocks with index $\cred{\cal{T}}$ corresponds to the $4 \times 4$ upper-left submatrices in the mass and stiffness matrices, the block with index $\cal{TF}$ to the $4 \times 2$ upper-right submatrices in the stiffness matrix (\ref{algebraic_system}), the block with index $\cal{FT}$ to the $2 \times 4$ lower-left submatrices, and the block with index $\cred{\cal{F}}$ to the $2 \times 2$ lower-right submatrices. Notice that $\cred{\cal{K}_{\cal{F}}}$ has a block-diagonal structure.

\section{Numerical results}\label{sec::numerical_results}

In this section, we present 2D numerical results obtained using the HHO-dG discretizations of the elasto-acoustic problem described above. In particular, we compare equal- and mixed-order settings for the HHO variables and $\cal{O}(1)$- and $\cal{O}(\frac{1}{h})$-stabilizations. We analyze first the spectral properties of the algebraic problem (\ref{compact_AR}). Next, we focus on a test case with analytical solution, so as to verify convergence rates. Finally, we study the case of a Ricker wavelet as an initial condition and we compare our results to \cred{a reference solution obtained by a numerical computation using Green functions.} 

The implementation is carried out in the open-source software Diskpp (available at \url{https://github.com/wareHHOuse/diskpp}), which is described in \cite{CDE_2018}.

In what follows, to precisely specify the level of discretization for each test case, we introduce two computational parameters: the spatial refinement level $\ell$, defined as $h = 2^{-\ell}$, and the time refinement level $n$, defined as $\Delta t = 0.1 \times 2^{-n}$. All the test cases are set up in space-time domains so that $\ell_\Omega \approx 1$ and $T_{\rm{f}} \approx 1$.

\subsection{Spectral analysis}

The goal here is to conduct a numerical spectral analysis of the space semi-discrete problem (\ref{compact_AR}). For that purpose, we consider the generalized eigenvalue problem associated with (\ref{compact_AR}). We define the Schur complement with respect to the face-face block of the stiffness matrix as
\begin{equation}
\cal{K}_{\sc{schur}} := \cred{\cal{K}_{\T}} - \cal{K}_{\T\F}~ \cred{\cal{K}_{\F}^{-1}} ~\cal{K}_{\F\T} .
\end{equation}
The corresponding eigenvalue problem is expressed as
\begin{equation}
\cal{K}_{\sc{schur}}^{\cred{\top}} ~ \cred{\cal{M}_{\T}^{-1}} ~ \cal{K}_{\sc{schur}} ~ \mathbf{X} = \gamma ~ \cred{\cal{M}_{\T}}~\mathbf{X},
\label{gen_eig_pb}
\end{equation}
where $\mathbf{X}$ is the eigenvector and $\gamma$ the eigenvalue. We define the spectral radius as the largest eigenvalue from (\ref{gen_eig_pb}).

Our objective is to investigate the influence of the stabilization on the spectral radius in both equal- and mixed-order settings. To this end, we consider $\cal{O}(1)$-stabilizations, and we introduce additional weights $\eta^\sc{f}$ and $\eta^\sc{s}$ scaling the stabilization bilinear forms. Specifically, we set
\begin{equation}
\cred{\widetilde{s}_{\cal{M}^\sc{f}}(\hat{p}_{\Mf}, \hat{q}_{\Mf}) := \eta^\sc{f} s_{\cal{M}^\sc{f}}(\hat{p}_{\Mf}, \hat{q}_{\Mf}), \qquad \widetilde{s}_{\cal{M}^\sc{s}}(\hat{\bd{v}}_{\Ms} , \hat{\bd{w}}_{\Ms}) := \eta^\sc{s} s_{\cal{M}^\sc{s}}(\hat{\bd{v}}_{\Ms}, \hat{\bd{w}}_{\Ms}).}
\end{equation}
\begin{figure}[!htb]
\centering
\includegraphics[width=0.325\textwidth]{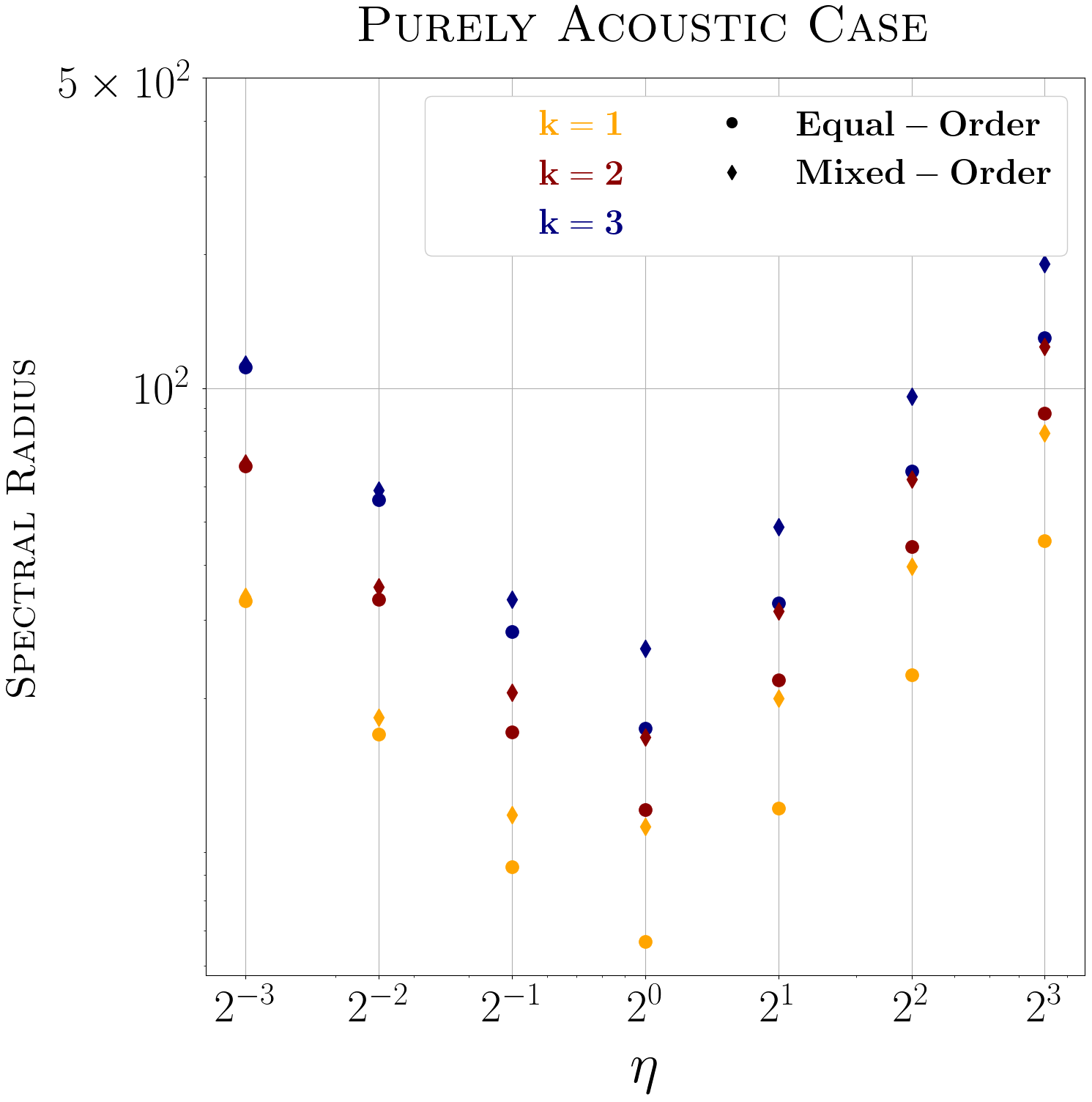}
\includegraphics[width=0.325\textwidth]{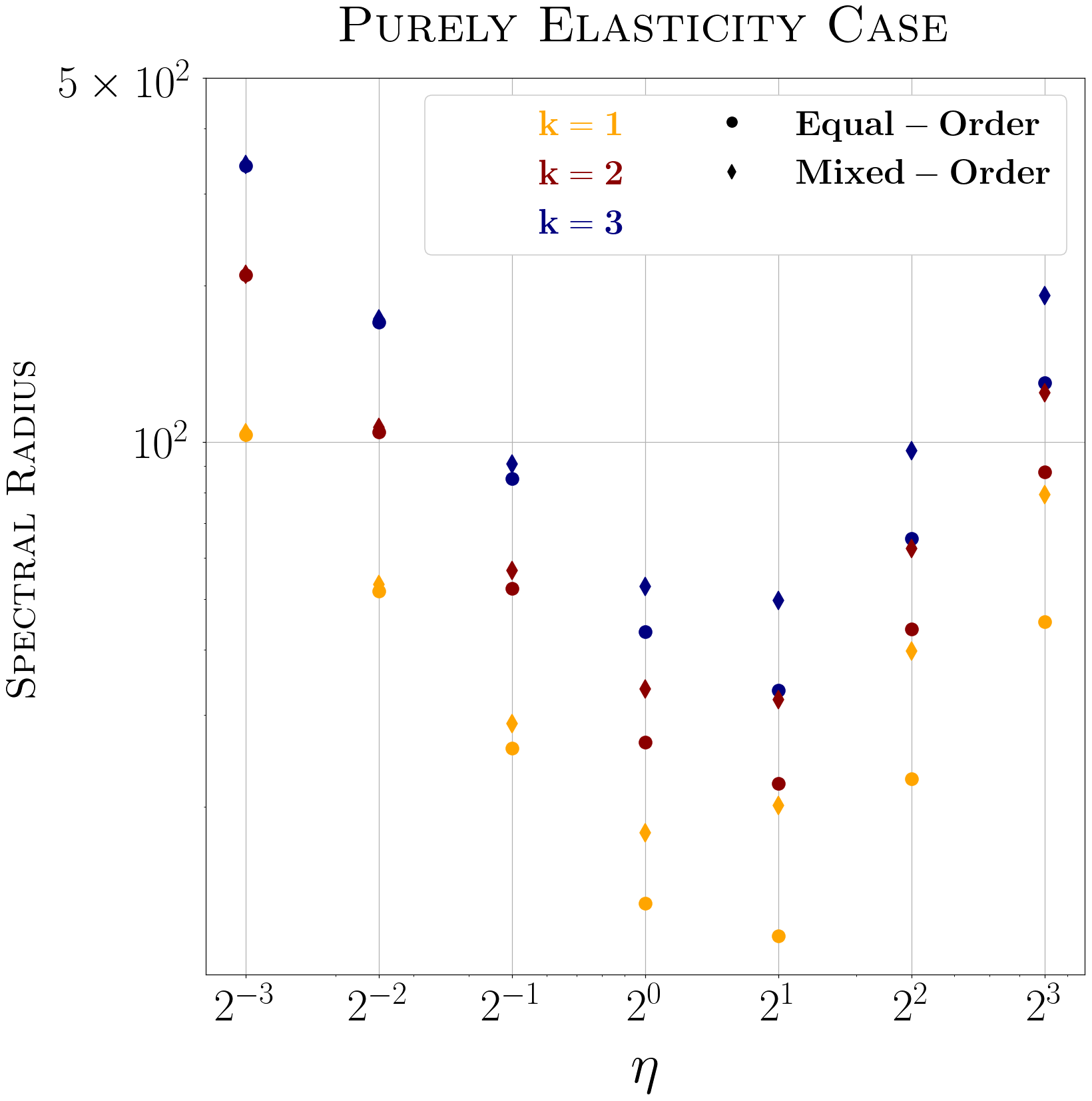}
\includegraphics[width=0.325\textwidth]{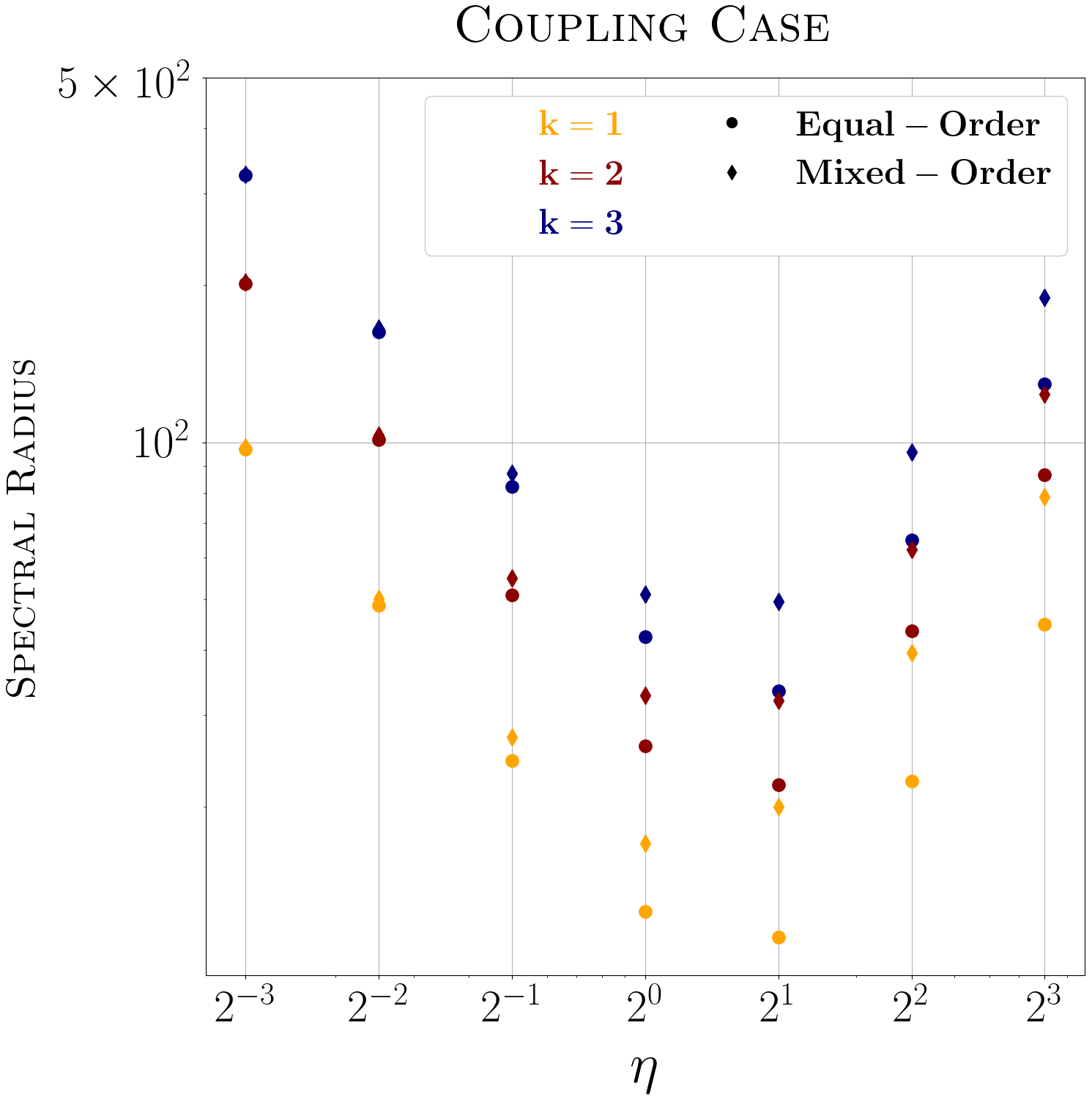}
\caption{Spectral radius in the equal-order and mixed-order settings for the pure acoustic, pure 
linear elasticity, and elasto-acoustic coupling cases with $k \in \{1{:}3\}$}
\label{spectral_radius_3_cases_plots}
\end{figure} 
First, in \hyperref[spectral_radius_3_cases_plots]{\Cref{spectral_radius_3_cases_plots}}, this analysis is conducted for three settings: pure acoustic with weight $\eta^\sc{f}$, pure elastic with weight $\eta^\sc{s}$, and elasto-acoustic coupling with weight $\eta^\sc{f} = \eta^\sc{s} = \eta$. All the spectral \cred{radii} are normalized by the size of the mesh, evaluated as $\sqrt{\# \text{cells}}$. The first observation is that the spectral radius essentially behaves as $\rm{max}(\eta+c_1, \frac{1}{\eta}+c_2)$ for some suitable constants $c_1$ and $c_2$. Therefore, choosing an $\cal{O}(\frac{1}{h})$-stabilization, which corresponds to selecting a high value for $\eta$, leads to a large spectral radius. This is unfavorable if explicit time-stepping schemes are used as it tightens the CFL restriction. Therefore, for an explicit time discretization, a stabilization of order $\cal{O}(1)$ is preferable. The second observation is that, regardless of the chosen discretization (equal- or mixed-order), the spectral radius in the purely elastic case is higher than in the purely acoustic case. This observation is consistent with the fact that elastic waves propagate at higher velocities than acoustic waves. Moreover, the behavior observed in the coupled elasto-acoustic case essentially aligns with that of the purely elastic case. The third observation is that, in all cases, the spectral radius for the equal-order setting is lower than the one for the mixed-order setting, indicating a potentially better CFL condition for the equal-order scheme.  

According to the spectral analysis of the two pure cases, we can set the weights for the acoustic and elastic stabilizations, $\eta^\sc{f}_\star$ and $\eta^\sc{s}_\star$, as follows so as to minimize the spectral radius in each pure case:
\begin{equation}
\eta^\sc{f}_\star = 
\left\{
\begin{aligned}
& 0.88, \\
& 0.80, \\
\end{aligned} 
\right.
\qquad
\eta^\sc{s}_\star = 
\left\{
\begin{aligned}
& 1.54, \qquad \textrm{equal-order},\\
& 1.38, \qquad \textrm{mixed-order}.\\
\end{aligned} 
\right.
\label{optimal_weights}
\end{equation}
In the rest of the paper we keep these settings. \hyperref[spectral_radius_eo_vs_mo_coupling]{\Cref{spectral_radius_eo_vs_mo_coupling}} then reports the spectral radius in the coupled elasto-acoustic case for equal- and mixed-order settings, $k \in \{1{:}3\}$ and with weights prescribed as $\eta^\sc{f} = 2^w \eta^\sc{f}_\star$ and $\eta^\sc{s} = 2^w \eta^\sc{s}_\star$ with $w \in \{-3{:}3\}$ and $\eta^\sc{f}_\star$, $\eta^\sc{s}_\star$ from (\ref{optimal_weights}). \cred{We observe that, in all cases, the lowest spectral radius is obtained for $w=0$.}
\ifHAL
\renewcommand{\a}{0.75}
\else
\renewcommand{\a}{0.7}
\fi
\begin{table}[!htb]
\centering
\resizebox{\a\textwidth}{!}{%
\begin{tabular}{c|c|ccccccc|}
\cline{2-9}
& \diagbox{$k$}{$\eta/\eta_{\star}$} & 1/8 & 1/4 & 1/2 & 1 & 2 & 4 & 8 \\ \hline
\multicolumn{1}{|c|}{} & 1 & 55.3 & 27.8 & 14.2 & 9.9 & 19.7 & 39.5 & 78.9 \\
\multicolumn{1}{|c|}{$\sc{Equal-Order}$} & 2 & 114.8 & 57.8 & 29.5 & 19.5 & 38.3 & 76.3 & 152.5 \\
\multicolumn{1}{|c|}{} & 3 & 185.2 & 93.2 & 47.7 & 29.6 & 57.3 & 113.9 & 227.3 \\ \hline
\multicolumn{1}{|c|}{} & 1 & 94.2 & 48.3 & 26.3 & 16.5 & 20.7 & 41.0 & 81.7 \\
\multicolumn{1}{|c|}{$\sc{Mixed-Order}$} & 2 & 195.0 & 99.3 & 53.0 & 31.8 & 33.2 & 64.7 & 128.7 \\
\multicolumn{1}{|c|}{} & 3 & 314.3 & 159.6 & 84.3 & 50.0 & 51.3 & 99.4 & 197.4 \\ \hline
\end{tabular}}
\caption{Spectral radius in the equal- and mixed-order settings for the elasto-acoustic coupling with reference weights given by (\ref{optimal_weights}) and $k \in \{1{:}3\}$}
\label{spectral_radius_eo_vs_mo_coupling}
\end{table}

Another interesting aspect is the influence of the mesh geometry on the spectral radius. \hyperref[spectral_radius_geometry]{\Cref{spectral_radius_geometry}} reports the spectral radius for polynomial degrees $k \in \{1{:}6\}$ on simplicial, quadrilateral, and polygonal meshes. The latter are generated using the software PolyMesher \cite{TPPM_2012}. We can see that, for an equal-order setting, the spectral radius on quadrangles is slightly better than that on simplices and polygons, with the latter being slightly worse. However, in the mixed-order setting, quadrangles and simplices appear to be roughly equivalent and at higher orders, simplices slightly outperform quadrangles in terms of spectral radius. Polygons remain the worst case in the mixed-order setting, but the difference is not that significant (about $15\%$ increase).
\ifHAL
\renewcommand{\a}{1.0}
\else
\renewcommand{\a}{0.85}
\fi
\begin{table}[!htb]
\centering 
\resizebox{\a\textwidth}{!}{%
\begin{tabular}{c|cc|cc|cc|}
\cline{2-7}
&  \multicolumn{2}{c|}{$\sc{Simplicial meshes} \quad \triangle$} &  \multicolumn{2}{c|}{$\sc{Quadragular meshes} \quad \square$} &  \multicolumn{2}{c|}{$\sc{Polygonal meshes} \quad {\large \varhexagon}$} \\
& $\sc{Equal-Order}$ & $\sc{Mixed-Order}$ & $\sc{Equal-Order}$ & $\sc{Mixed-Order}$ & $\sc{Equal-Order}$ & $\sc{Mixed-Order}$ \\ \hline
\multicolumn{1}{|c|}{$k=1$} & 11.6                  & 17.6                  & 9.9                    & 16.5                   & 10.5                   & 20.1                   \\
\multicolumn{1}{|c|}{$k=2$} & 21.3                  & 31.4                  & 19.5                   & 31.8                   & 20.7                   & 37.2                   \\
\multicolumn{1}{|c|}{$k=3$} & 33.4                  & 47.8                  & 29.6                   & 50.0                   & 35.2                   & 59.4                   \\ 
\multicolumn{1}{|c|}{$k=4$} & 49.2                  & 69.5                  & 45.3                   & 74.0                   & 53.9                   & 86.6                   \\
\multicolumn{1}{|c|}{$k=5$} & 68.0                  & 93.7                  & 61.5                   & 100.6                  & 76.6                   & 118.9                   \\
\multicolumn{1}{|c|}{$k=6$} & 90.1                  & 123.2                 & 83.0                   & 134.0                  & 103.5                  & 156.3                    \\ \hline
\end{tabular}%
}
\caption{Spectral radius for different cell geometries in equal- and mixed-order settings with $k \in \{1{:}6\}$ and optimal setting for $\eta^\sc{f}$ and $\eta^\sc{s}$ from (\ref{optimal_weights})}
\label{spectral_radius_geometry}
\end{table}

\subsection{Time discretization schemes}

We consider both implicit and explicit time-stepping schemes. In the implicit case, we allow for both $\cal{O}(1)$- and $\cal{O}(\frac{1}{h})$-stabilizations, and for simplicity we focus \cred{on} the mixed-order setting since the equal-order setting appears to be slightly more expensive in the static case \cite{CEP_2021}. We consider $s$-stage Singly Diagonally Implicit Runge--Kutta (SDIRK) schemes of order $(s{+}1)$ with $s \in \{2,3\}$. The Butcher tableaux are reported in \hyperref[SDIRK]{\Cref{SDIRK}}. In the explicit case, owing to the CFL restriction, we only consider $\cal{O}(1)$-stabilization, but we include both equal- and mixed-order settings. We consider s-stage Explicit Runge--Kutta (ERK) schemes of order $s$ with $s \in \{2,3,4\}$. The Butcher tableaux are reported in \hyperref[ERK]{\Cref{ERK}}.
\begin{table}[!htb]
\centering 
\subcaptionbox{SDIRK$(2,3)$\label{SDIRK2}}[0.3\textwidth][c]{
\centering
\renewcommand{\arraystretch}{1.25} 
\begin{tabular}{c|cccc}
$\frac{1}{4}$ & $\frac{1}{4}$ & $0$   \\ 
$\frac{3}{4}$ & $\frac{1}{2}$ & $\frac{1}{4}$  \\ 
\hline        & $\frac{1}{2}$ & $\frac{1}{2} $
\end{tabular}
}
\subcaptionbox{SDIRK(3,4)\label{SDIRK3}}[0.3\textwidth][c]{
\centering
\renewcommand{\arraystretch}{1.25} 
\begin{tabular}{c|cccc}
$\theta$      & $\theta$             & $0$         & $0$        \\ 
$\frac{1}{2}$ & $\frac{1}{2}-\theta$ & $\theta$    & $0$        \\ 
$1-\theta$    & 2$\theta$            & $1-4\theta$ & $\theta$ \\ 
\hline        & $\xi$                & $1-2\xi$ & $\xi$
\end{tabular}
}
\caption{Butcher tableaux for the SDIRK$(s,s{+}1)$ schemes. SDIRK(3,4) is obtained with $\theta := \frac{1}{\sqrt{3}} \cos(\frac{\pi}{18}) + \frac{1}{2}$, $\xi := \frac{1}{6(2\theta-1)^2}$.}
\label{SDIRK}
\end{table}

\begin{table}[!htb]
\centering
\begin{minipage}{0.25\textwidth}
\centering
\subcaptionbox{ERK$(2)$\label{ERK2}}{
\renewcommand{\arraystretch}{1.25} 
\begin{tabular}{c|cccc}
$0$           & $0$           & $0$     \\ 
$\frac{1}{2}$ & $\frac{1}{2}$ & $0$ \\ 
\hline        & $0$ & $1$
\end{tabular}
}
\end{minipage}%
\hspace{0.05\textwidth} 
\begin{minipage}{0.25\textwidth}
\centering
\subcaptionbox{ERK$(3)$\label{ERK3}}{
\renewcommand{\arraystretch}{1.25} 
\begin{tabular}{c|cccc}
$0$           & $0$           & $0$           & $0$ \\ 
$\frac{1}{2}$ & $\frac{1}{2}$ & $0$           & $0$ \\ 
$1$           & $-1$          & $2$           & $0$ \\ 
\hline        & $\frac{1}{6}$ & $\frac{2}{3}$ & $\frac{1}{6}$
\end{tabular}
}
\end{minipage}%
\hspace{0.05\textwidth} 
\begin{minipage}{0.25\textwidth}
\centering
\subcaptionbox{ERK$(4)$\label{ERK4}}{
\renewcommand{\arraystretch}{1.25} 
\begin{tabular}{c|cccc}
$0$           & $0$           & $0$           & $0$           & $0$\\
$\frac{1}{2}$ & $\frac{1}{2}$ & $0$           & $0$           & $0$\\ 
$\frac{1}{2}$ & $0$           & $\frac{1}{2}$ & $0$           & $0$\\ 
$1$           & $0$           & $0$           & $1$           & $0$ \\ 
\hline        & $\frac{1}{6}$ & $\frac{1}{3}$ & $\frac{1}{3}$ & $\frac{1}{6}$
\end{tabular}
}
\end{minipage}
\caption{Butcher tableaux for the ERK$(s)$ schemes.}
\label{ERK}
\end{table}

\subsection{Convergence rates for smooth analytical solutions}

In this section, we study the convergence rates on smooth analytical solutions. Both the acoustic and elastic media have the same density and similar wave speeds. Specifically, we consider the simulation time $T_{\rm{f}} = 1$ and
\begin{itemize}
\item $\domain{f} := (0,1)\times(0,1)$, with density $\rho^\sc{f} := 1$, \cred{fluid bulk} modulus $\kappa := 1$, and velocity of the pressure waves $c_{\sc{p}}^\sc{f} := 1$,
\item $\domain{s} := (-1,0)\times(0,1)$, with density $\rho^\sc{s} := 1$, and Lamé parameters so that $c_\sc{p}^\sc{s} := \sqrt{3}$ and $c_\sc{s} := 1$.
\end{itemize}
The analytical solution is expressed in terms of the potentials $u$ (acoustic) and $\bd{u} := (u_x,u_y)$ (elastic) so that
\begin{subequations}
\begin{alignat}{3}
p &:= \partial_t u &\qquad\qquad \bd{m} &:= \nabla u & \qquad \qquad &\text{in } \domain{f},\\
\bd{v} & := \partial_t \bd{u} & \qquad \qquad \bbm{C}^{-1}{:}\bbm{s} & := \nabla_{\rm{sym}} \bd{u} &\qquad\qquad & \text{in } \domain{s}.
\end{alignat}
\end{subequations}   
The source terms, the (non)homogeneous Dirichlet boundary conditions, and the initial conditions are defined according to the following choices for the potentials, which indeed satisfy the coupling conditions (\ref{coupling1})-(\ref{coupling2}):
\begin{enumerate}
\item Polynomial in space, so that the temporal error is the only error component:
\begin{subequations} 
\begin{equation}
u := (1-x)x^2(1-y)y\sin(\sqrt{2}\pi t) \qquad u_x = u_y := (1+x)x^2(1-y)y\sin(\sqrt{2}\pi t);
\label{poly_in_space}
\end{equation}      
\item Polynomial in time, so that the spatial error is the only error component:
\begin{equation}
u = u_x = u_y := x\sin(\pi x)\sin(\pi y)t^2.
\label{poly_in_time}
\end{equation}        
\end{subequations}
\end{enumerate}

\ifHAL
\renewcommand{\a}{0.4}
\else
\renewcommand{\a}{0.3}
\fi
\begin{figure}[!htb]
\centering 
\includegraphics[width=\a\textwidth]{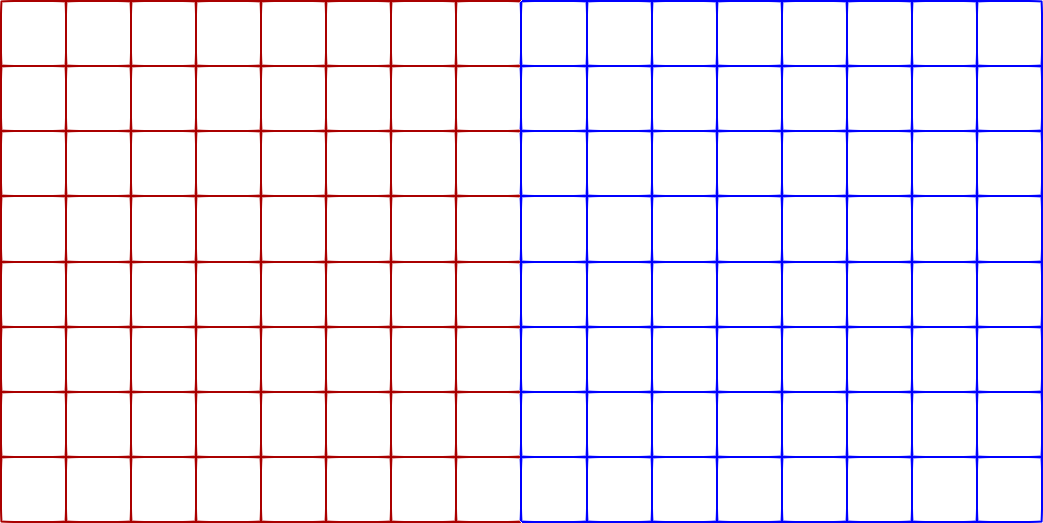}
\hspace{1cm}
\includegraphics[width=\a\textwidth]{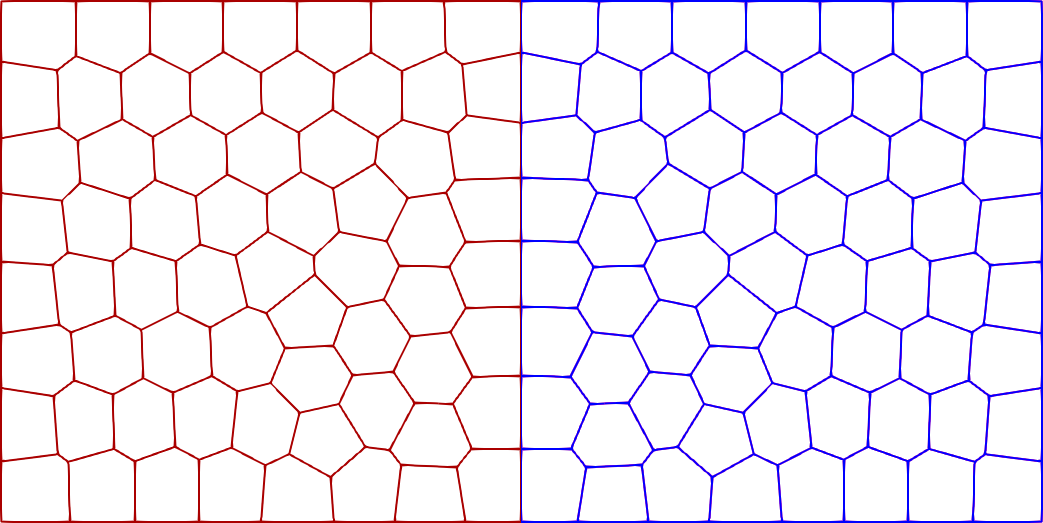}
\caption{Cartesian and polygonal meshes for $\ell = 2$}
\label{meshes}
\end{figure}
We consider two types of meshes: Cartesian and polygonal meshes. Some examples are shown in Figure \ref{meshes} for $\ell = 2$, with the elastic subdomain mesh on the left side in red, and the acoustic subdomain mesh on the right side in blue. In each subdomain, two types of unknowns contribute to the mechanical energy: (i) the cell components of the HHO unknowns; (ii) the dG unknowns. Thus, we set 
\begin{subequations}
\begin{align}
\|p_{\Tf},\bd{v}_{\Ts}\|^2_{\rm{HHO}} & := \|p_{\Tf}(t)\|^2_{L^2(\frac{1}{\kappa};\domain{f})} + \|\bd{v}_{\Ts}(t)\|^2_{\bd{L}^2(\rho^\sc{s}; \domain{s})},\\
\|\bd{m}_{\Tf},\bbm{s}_{\Ts}\|^2_{\rm{dG}} & := \|\bd{m}_{\Tf}(t)\|^2_{\bd{L}^2(\rho^\sc{f};\domain{f})} + \|\bbm{s}_{\Ts}(t)\|^2_{\bbm{L}^2(\bbm{C}^{-1};\domain{s})}.
\label{errors}
\end{align}
\end{subequations}
In what follows, we report these two contributions separately, since they can feature different convergence rates.

We first consider convergence rates in time. For this purpose, we use the analytical solution (\ref{poly_in_space}). \hyperref[fig::conv_rates_time]{\Cref{fig::conv_rates_time}} shows that, as expected, optimal convergence rates in time are reached in the mixed-order setting and $\cal{O}(\frac{1}{h})$-stabilization: order $s$ for ERK$(s)$ schemes and order $(s{+}1)$ for SDIRK$(s, s{+}1)$ schemes. The same results are obtained in the equal-order setting, for $\cal{O}(1)$-stabilization, or on polyhedral meshes (results omitted for brevity).
\begin{figure}[!htb] 
\centering
\includegraphics[width=0.325\textwidth]{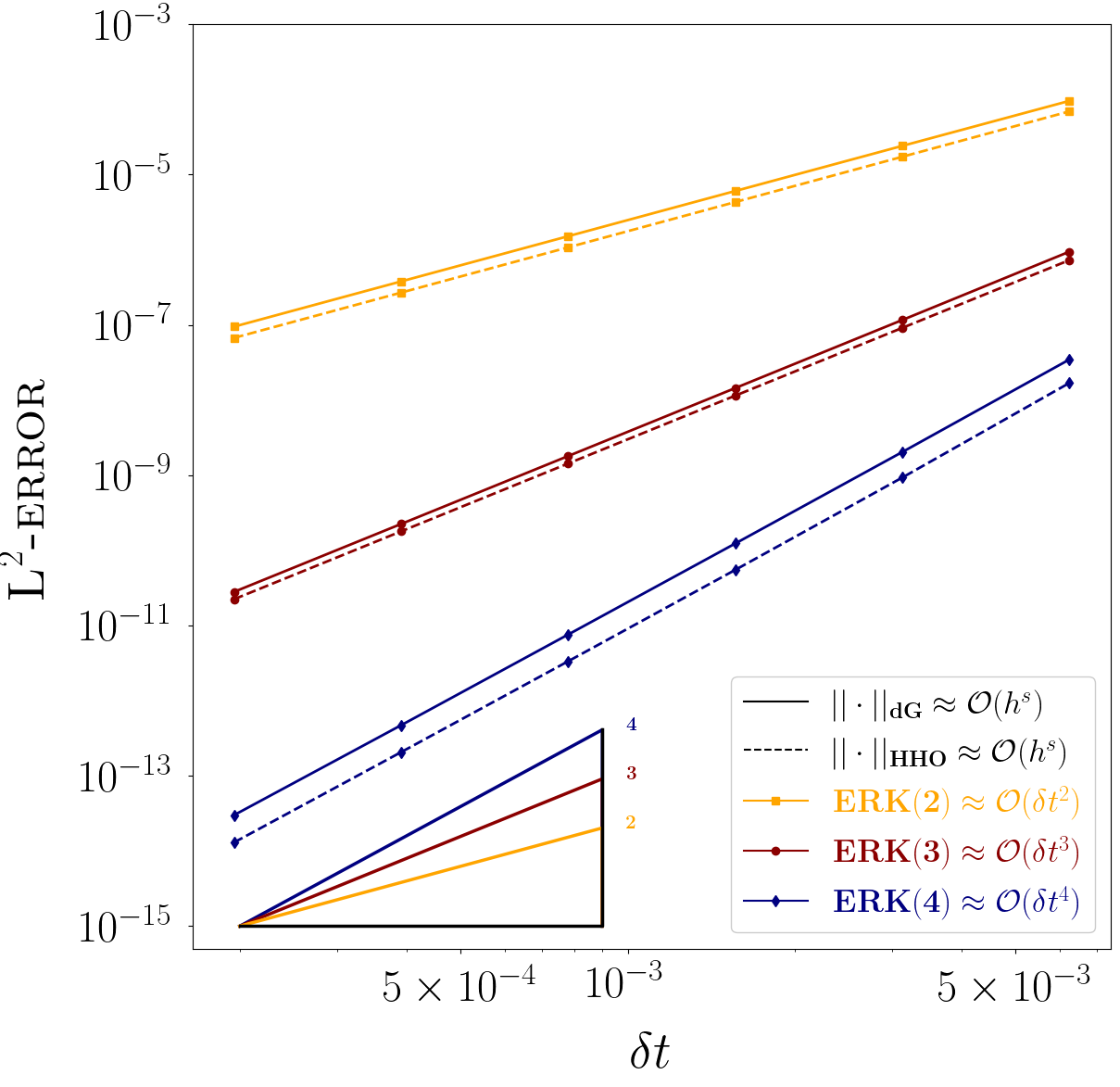}
\hspace{1cm}
\includegraphics[width=0.325\textwidth]{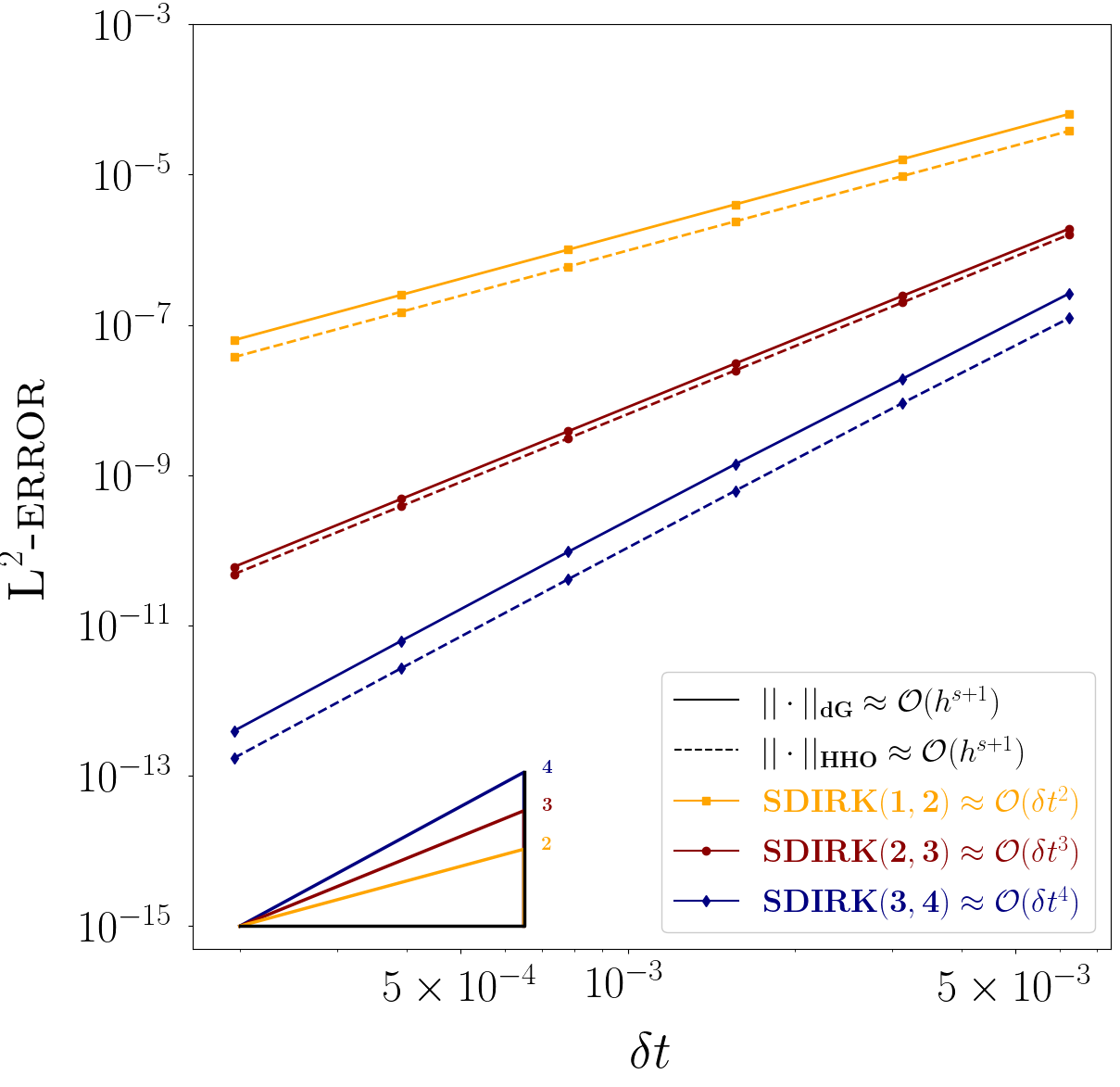}
\caption{Errors as a function of the time step for the analytical solution (\ref{poly_in_space}) with a mixed-order discretization, $k=4$, $\ell=1$, $n \in \{4, 5, 6, 7, 8, 9, 10\}$ and $\cal{O}(\frac{1}{h})$-stabilization. \textbf{Left}: SDIRK($s,s{+}1$) with $s \in \{1, 2, 3\}$. \textbf{Right}: ERK($s$) with $s \in \{2, 3, 4\}$.}
\label{fig::conv_rates_time}
\end{figure}   

We consider now convergence rates in space on Cartesian meshes. For this purpose, we use the analytical solution (\ref{poly_in_time}). The left and central panels of \hyperref[fig::conv_rates_cartesian]{\Cref{fig::conv_rates_cartesian}} present the errors (\ref{errors}) as a function of the mesh size for ERK schemes with $\cal{O}(1)$-stabilization in both equal- and mixed-order settings. The right panel of \hyperref[fig::conv_rates_cartesian]{\Cref{fig::conv_rates_cartesian}}, reports errors (\ref{errors}) as a function of the mesh size for SDIRK schemes in mixed-order setting with $\cal{O}(\frac{1}{h})$-stabilization are reported. For ERK schemes with $\cal{O}(1)$-stabilization in equal- and mixed-order settings, both HHO and dG unknowns exhibit the expected convergence rate of order $(k+1)$ (with sometimes a slight suboptimality for the dG unknowns). In contrast, SDIRK schemes with $\cal{O}(\frac{1}{h})$-stabilization achieve improved convergence rates on the HHO unknowns which now converge at order $(k+2)$, whereas dG unknowns still converge at order $(k+1)$. We still notice a superconvergence phenomenon at lower polynomial orders for the dG unknowns.
\begin{figure}[!htb]
\centering
\includegraphics[width=0.325\textwidth]{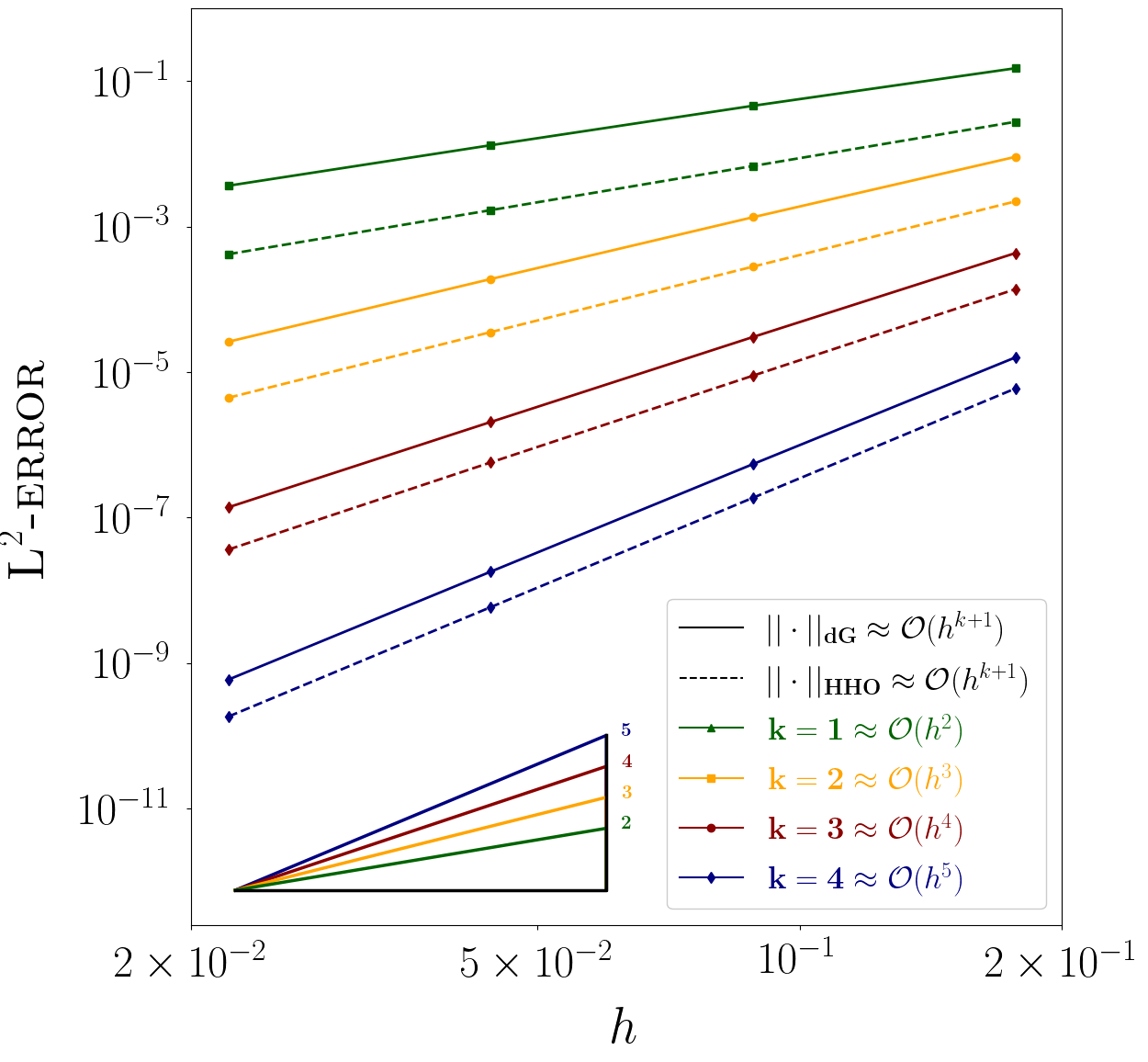}
\includegraphics[width=0.325\textwidth]{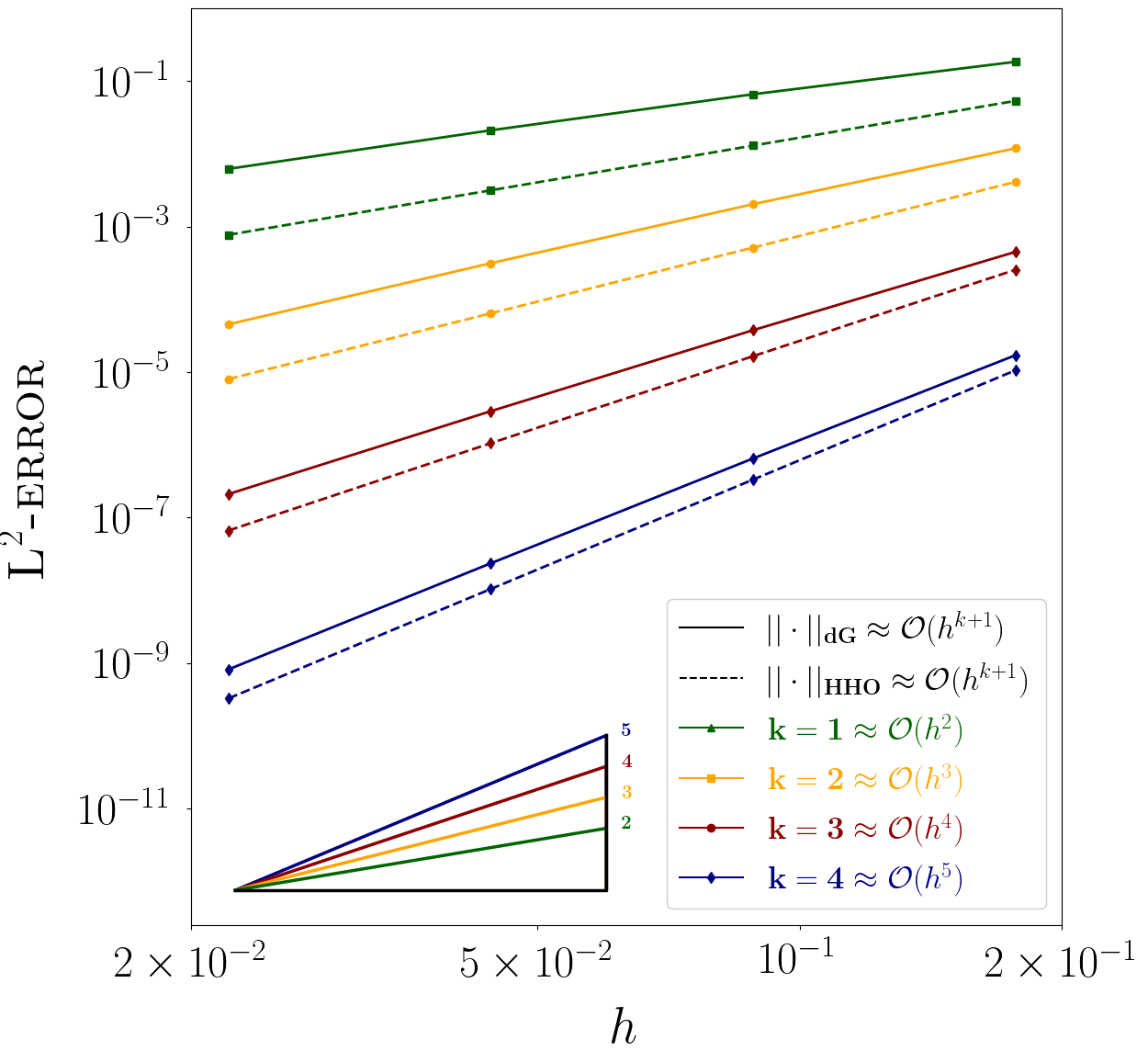}
\includegraphics[width=0.325\textwidth]{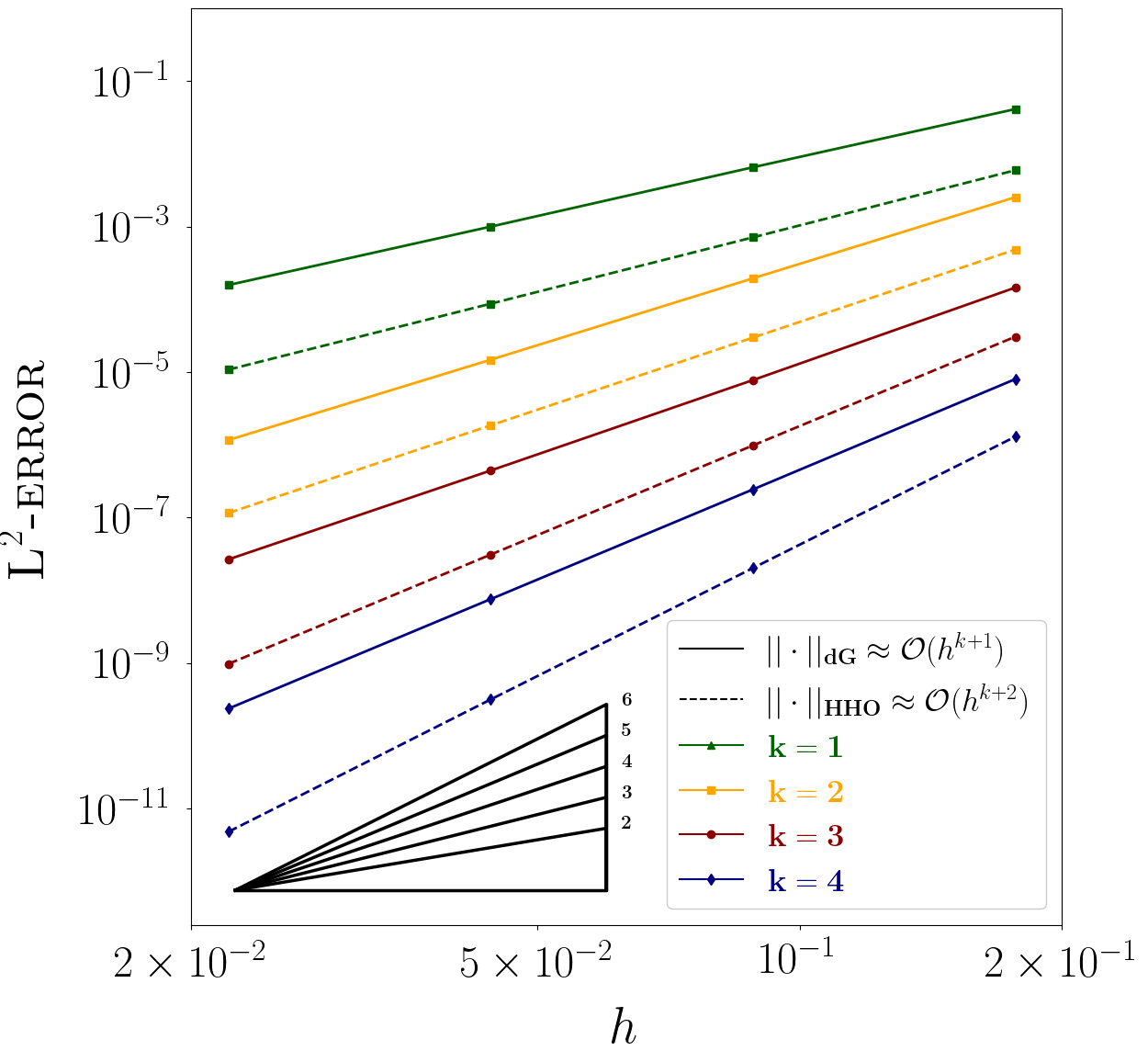}
\caption{Errors (\ref{errors}) as a function of the mesh-size (Cartesian meshes) for the analytical solution (\ref{poly_in_time}). \textbf{Left:} Equal-order with $\cal{O}(1)$-stabilization. \textbf{Center:} Mixed-order with $\cal{O}(1)$-stabilization. \textbf{Right:} Mixed-order with $\cal{O}(\frac{1}{h})$-stabilization. {Computational parameters:} $n=8$, $\ell \in \{2,3,4,5\}$.}
\label{fig::conv_rates_cartesian} 
\end{figure}

Finally, we evaluate spatial convergence rates on polyhedral meshes. \hyperref[fig::conv_rates_poly]{\Cref{fig::conv_rates_poly}} presents the results for ERK schemes in equal- and mixed-order settings with $\cal{O}(1)$-stabilization and SDIRK schemes in mixed-order setting with $\cal{O}(\frac{1}{h})$-stabilization. The conclusions on polyhedral meshes corroborate those on Cartesian meshes.
\begin{figure}[!htb]
\centering
\includegraphics[width=0.325\textwidth]{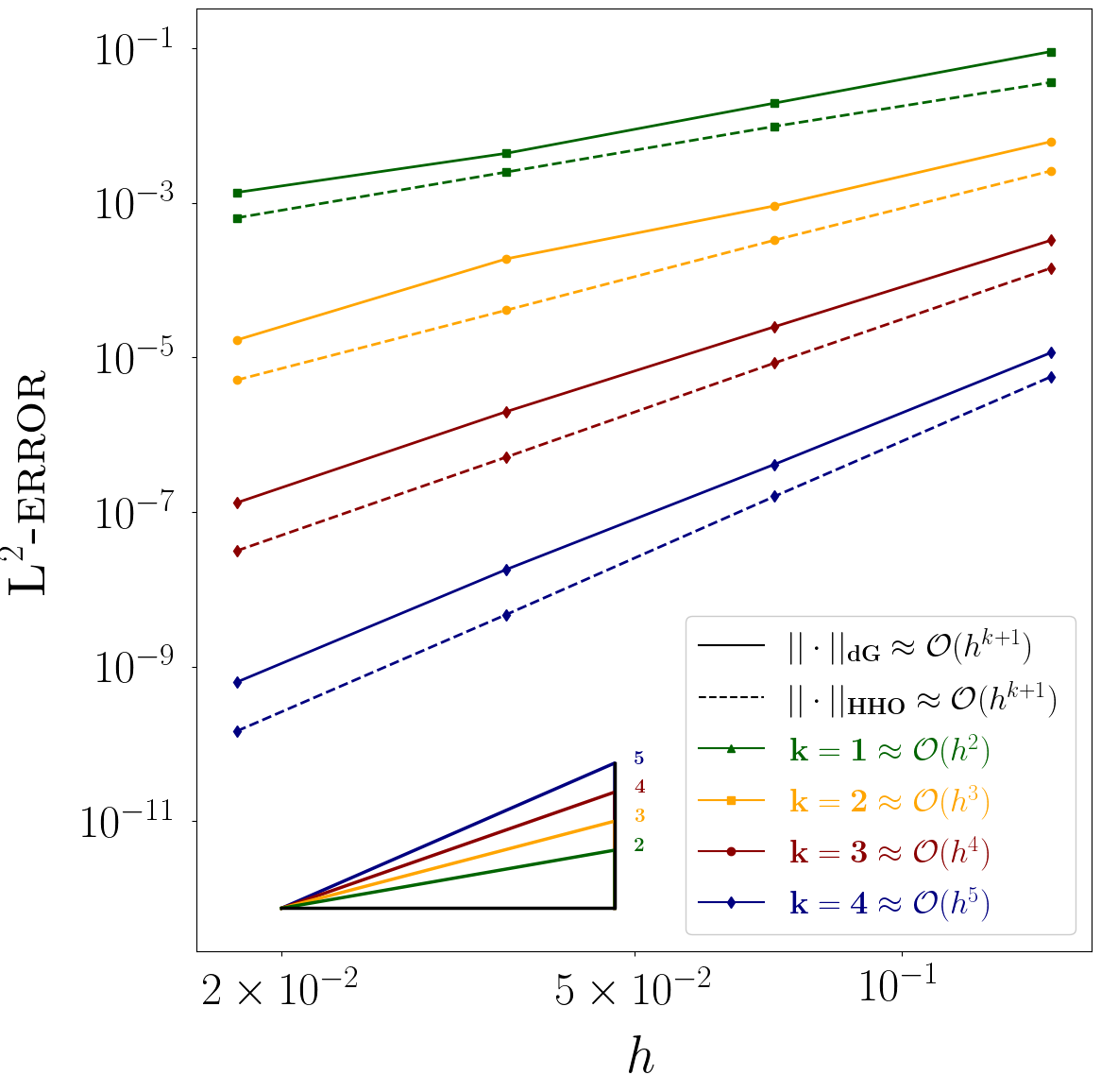}
\includegraphics[width=0.325\textwidth]{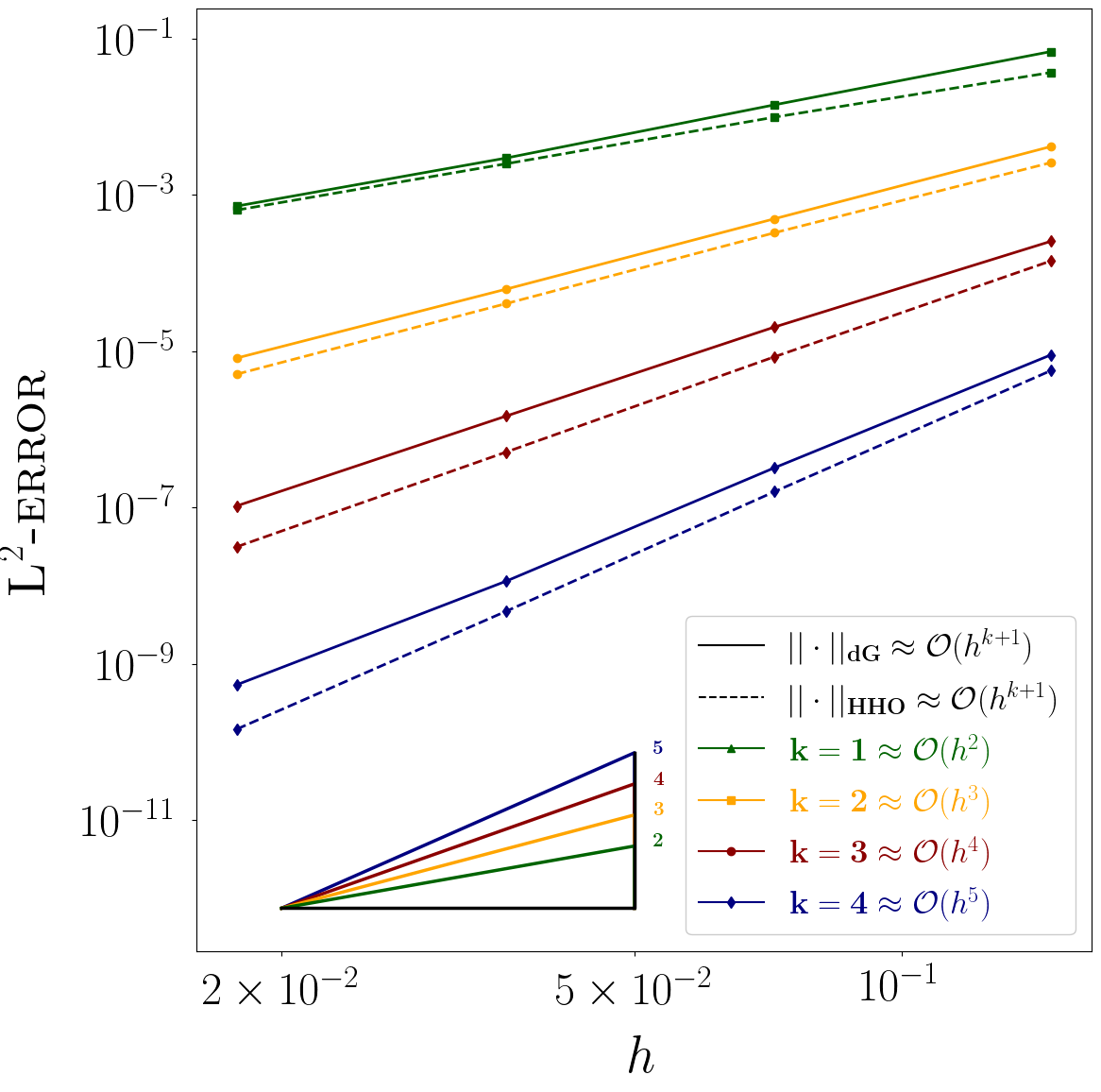}
\includegraphics[width=0.325\textwidth]{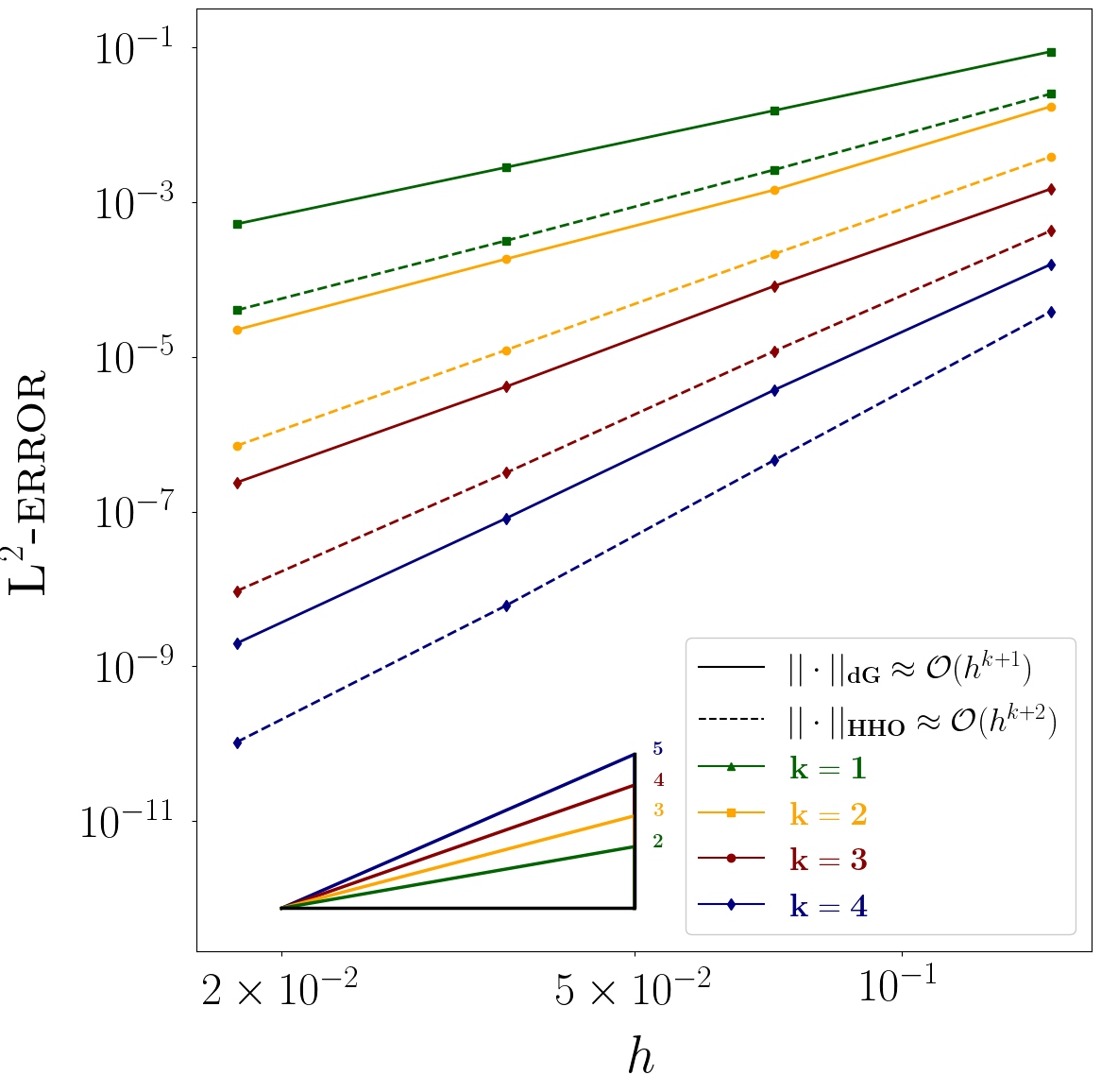}
\caption{Errors (\ref{errors}) as a function of the mesh size (Polyhedral meshes) for the analytical solution (\ref{poly_in_time}). \textbf{Left:} Equal-order with $\cal{O}(1)$-stabilization. \textbf{Center:} Mixed-order with $\cal{O}(1)$-stabilization. \textbf{Right:} Mixed-order with $\cal{O}(\frac{1}{h})$-stabilization. {Computational parameters:} $n=8$, $\ell \in \{2,3,4,5\}$.}
\label{fig::conv_rates_poly} 
\end{figure} 

\FloatBarrier
\subsection{Numerical study of Ricker wavelet as initial condition}

This test case deals with the propagation of an elasto-acoustic wave through a heterogeneous domain. \cred{Referring} to \hyperref[domain]{\Cref{domain}}, let $H$ be the height of the domain, $H_e$ the height of the elastic subdomain and $L$ the length of both subdomains. We set $\domain{f} := (0, L) \times (0, H-H_e)$ and $\domain{s} := (0, L) \times (-H_e, 0)$. Two sensors $\cal{S}^{\sc{f}}$ and $\cal{S}^{\sc{s}}$ are positioned on a vertical line as in the Figure \ref{domain} so as to have one sensor in each subdomain. For the acoustic sensor $\cal{S}^{\sc{f}}$, the acoustic pressure $p_{\Tf}$ is considered, and for the elastic sensor $\cal{S}^{\sc{s}}$ the $x$- and $y$-composents of the elastic velocity $\bd{v}_{\Ts}$ are considered. Homogeneous Dirichlet boundary conditions are enforced, the source terms are null, and the initial condition corresponds to a velocity Ricker wavelet centered at the point $(x_c, y_c) \in \Omega^\sc{f}$ (in purple on Figure \ref{domain}) given by the following expression:
\begin{equation}
\bd{m_0}(x, y) := \theta \exp \bigg(-\pi^2 \frac{r^2}{\cred{\Lambda}^2}\bigg)(x-x_c, y-y_c)^{\cred{\top}}
\end{equation}
with $\theta := 10 \left[\frac{1}{\rm{s}}\right]$, $\cred{\Lambda} := \frac{c^\sc{f}_\sc{p}}{f_c}[\rm{m}]$ with $f_c := 10\left[\frac{1}{\rm{s}}\right]$ and $r^2 := (x-x_c)^2 + (y-y_c)^2$. This initial condition corresponds to a velocity Ricker wave centered at the point $(x_c, y_c) \in \Omega$. 
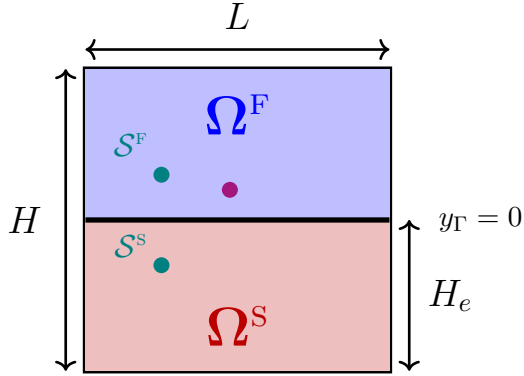
\begin{figure}[!htb]
\centering 
\begin{tikzpicture}[scale=4]
\draw[line width = 2, color = black] (0,0) rectangle (1,1);
\fill[color=ceared!25] (0, 0) rectangle (1, 0.5);
\fill[color=blue!25] (0, 0.5) rectangle (1,1);
\draw[line width = 2, color = black] (0,0.5) -- (1,0.5);
\draw[<->, line width = 1] (-0.065,0) -- (-0.065,1);
\draw[<->, line width = 1] (0,1.065) -- (1,1.065);
\draw[<->, line width = 1] (1.065,0) -- (1.065,0.5);
\node at (0.5,0.85)  {\huge \color{blue}   $\bd{\domain{f}}$};
\node at (0.5,0.15)  {\huge \color{ceared} $\bd{\domain{s}}$};
\node at (0.5,1.175)  {\Large $L$};
\node at (-0.2,0.5) {\Large $H$};
\node at (1.2,0.25) {\Large $H_e$};
\node at (1.3,0.5)  {\normalsize $y_{\G}=0$};
\fill[color_dofs]  (0.475,0.6) circle (0.75pt);
\fill[teal] (0.25,0.65) circle (0.75pt);
\fill[teal] (0.25,0.35) circle (0.75pt);
\node at (0.15,0.75) {\large \color{teal}   $\cal{S}^{\sc{f}}$};
\node at (0.15,0.415) {\large \color{teal}   $\cal{S}^{\sc{s}}$};
\end{tikzpicture}
\caption{Ricker wavelet test case}
\label{domain}
\end{figure}  

\ifHAL
\FloatBarrier
\else
\fi

\subsubsection{Academic setting}

We first consider an academic case in which the acoustic and elastic media have the same density and propagate S-waves at the same speed as compressional acoustic waves, \textit{i.e.},
\begin{equation}
\rho^\sc{f} = \rho^\sc{s} = 1, \qquad c_{\sc{p}}^\sc{s} = \sqrt{3}, \qquad c_{\sc{p}}^\sc{f} = c_{\sc{s}}^\sc{s} = 1 .
\label{academic_properties}
\end{equation}  
As the material properties are similar, we consider two subdomains of the same dimension, with $L=H=1$, $H_e=0.5$ and $x_c := 0$, $y_c := 0.125$ for the origin of the pulse in the acoustic subdomain. The simulation time is set to $T_{\rm{f}} := 10$.

The following results are obtained using the SDIRK$(3,4)$ time scheme with a mixed-order setting and $\cal{O}(1)$-stabilization. \hyperref[snapshot_academic_pulse]{\Cref{snapshot_academic_pulse}} displays the two-dimensional pressure distribution in the acoustic subdomain and the \cred{Euclidean} norm of the velocity in the elastic subdomain at times $t \in \{0, 0.25, 0.27, 0.32\}$. We can see that the simulation propagates correctly the acoustic Ricker wavelet through the elasto-acoustic interface. The difference in properties, intrinsic to the nature of the media, causes a small reflection of the acoustic wave when it meets the interface. Moreover, we observe the transmission of the compressional wave as a P-wave (with the larger celerity), as well as the creation of an S-wave in the elastic part of the domain.

\ifHAL
\begin{figure}[!htb]
\centering
\includegraphics[width=0.3\textwidth]{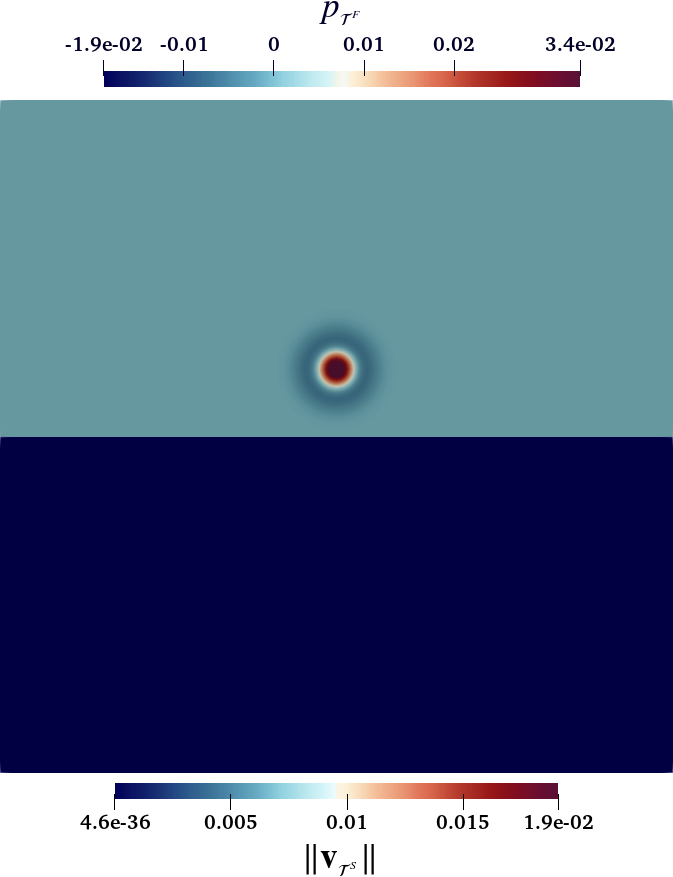}
\hspace{1cm}
\includegraphics[width=0.3\textwidth]{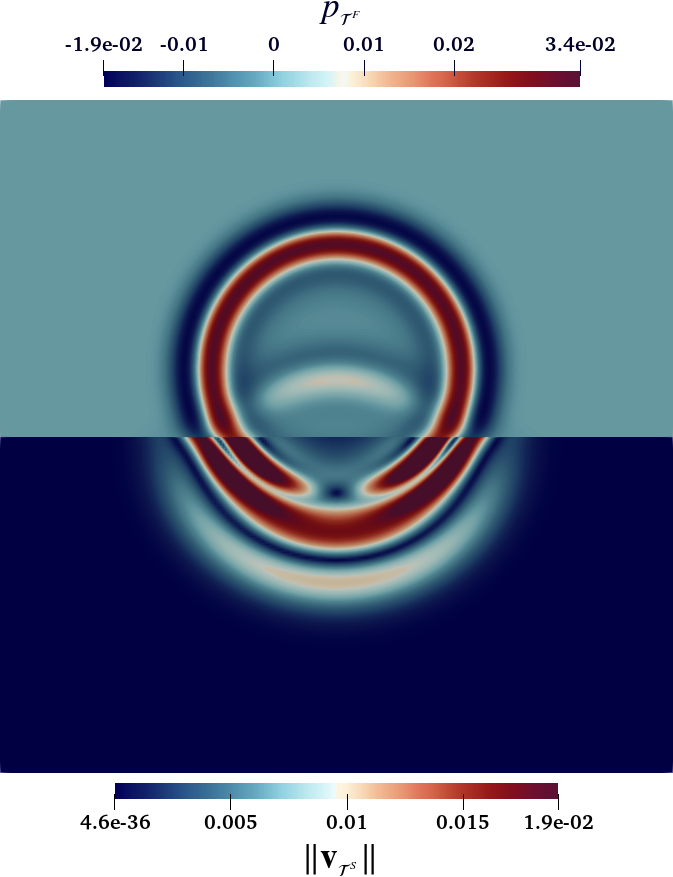}\\[0.35cm]
\includegraphics[width=0.3\textwidth]{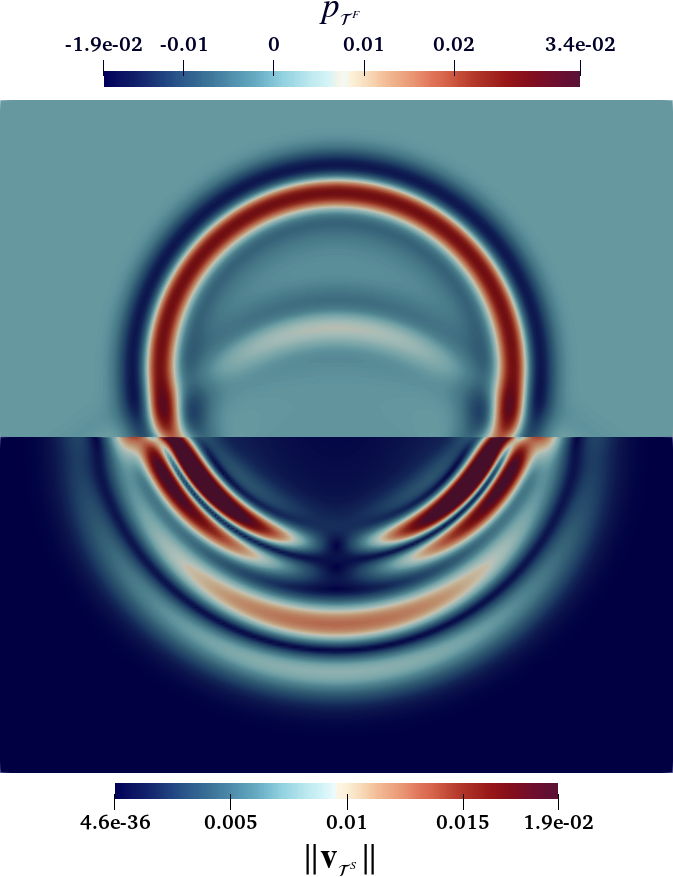}
\hspace{1cm}
\includegraphics[width=0.3\textwidth]{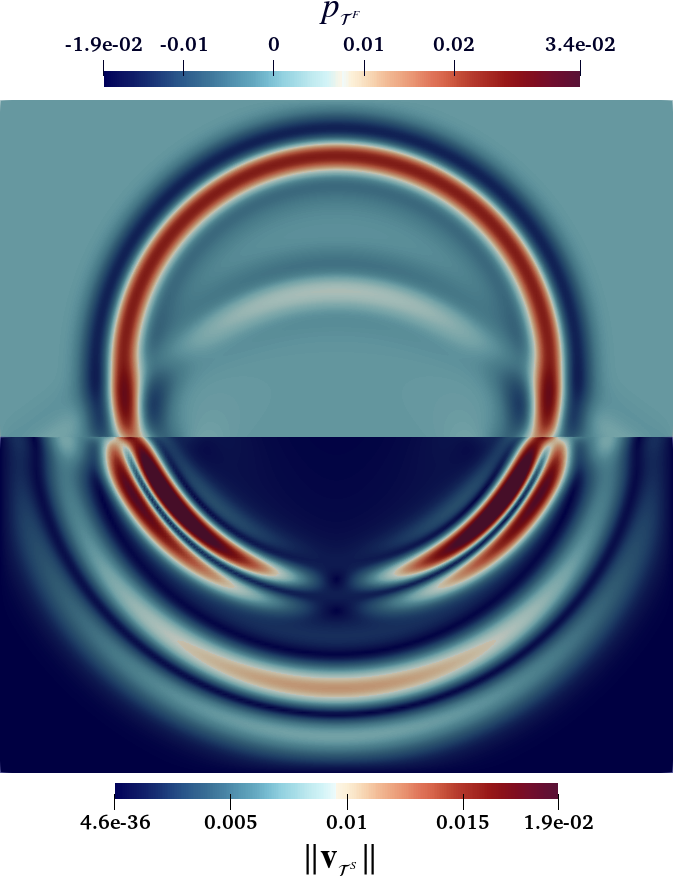}
\caption{Spatial distribution of the acoustic pressure (upper side) and the elastic velocity norm (lower side) at times $t \in \{0, 0.25, 0.27, 0.32\}$ predicted by SDIRK$(3,4)$ scheme with mixed-order setting, $\cal{O}(\frac{1}{h})$-stabilization, $k=1$, $\ell = 7$, and $n=9$.}
\label{snapshot_academic_pulse}
\end{figure}
\else
\begin{figure}[!htb]
\centering
\includegraphics[width=0.23\textwidth]{2.png}
\includegraphics[width=0.23\textwidth]{179.png}
\includegraphics[width=0.23\textwidth]{249.png}
\includegraphics[width=0.23\textwidth]{299.png}
\caption{Spatial distribution of the acoustic pressure (upper side) and the elastic velocity norm (lower side) at times $t \in \{0, 0.25, 0.27, 0.32\}$ predicted by SDIRK$(3,4)$ scheme with mixed-order setting, $\cal{O}(\frac{1}{h})$-stabilization, $k=1$, $\ell = 7$, and $n=9$.}
\label{snapshot_academic_pulse}
\end{figure}
\fi

A comparison to a semi-analytical solution provided by the open source software Gar6more (\url{https://gitlab.inria.fr/jdiaz/gar6more2d}) is performed in \hyperref[Gar6more_comparison_academic]{\Cref{Gar6more_comparison_academic}}. For two-dimensional infinite or semi-infinite domains, this code computes the analytical solution of elasto-acoustic waves propagating in homogeneous or heterogeneous media. \cred{In particular, the software analytically convolves the Green function of the problem  with the source function. The solution is called semi-analytical since the convolution is done by a numerical integration.} 
In \hyperref[Gar6more_comparison_academic]{\Cref{Gar6more_comparison_academic}}, we report the solution for times $t \in [0,0.25]$ at the two sensors $\cal{S}^{\sc{f}} := (-0.15,0.1)$ and $\cal{S}^{\sc{s}} := (-0.15,-0.1)$ with the material properties defined in (\ref{academic_properties}), for two rather coarse meshes: $\ell = 4$ on the left column and $\ell = 5$ on the right column. \hyperref[Gar6more_comparison_academic]{\Cref{Gar6more_comparison_academic}} shows that, for a moderate property contrast, even on the rather coarse mesh corresponding to $\ell=4$, we can approximate the analytical solution with high accuracy by increasing the polynomial order. Altogether, only the case $k=1$ on the coarse mesh does not allow to obtain an accurate representation of the solution.
\ifHAL
\renewcommand{\a}{0.4}
\else
\renewcommand{\a}{0.36}
\fi
\begin{figure}[!htb]
\centering
\includegraphics[width=\a\textwidth]{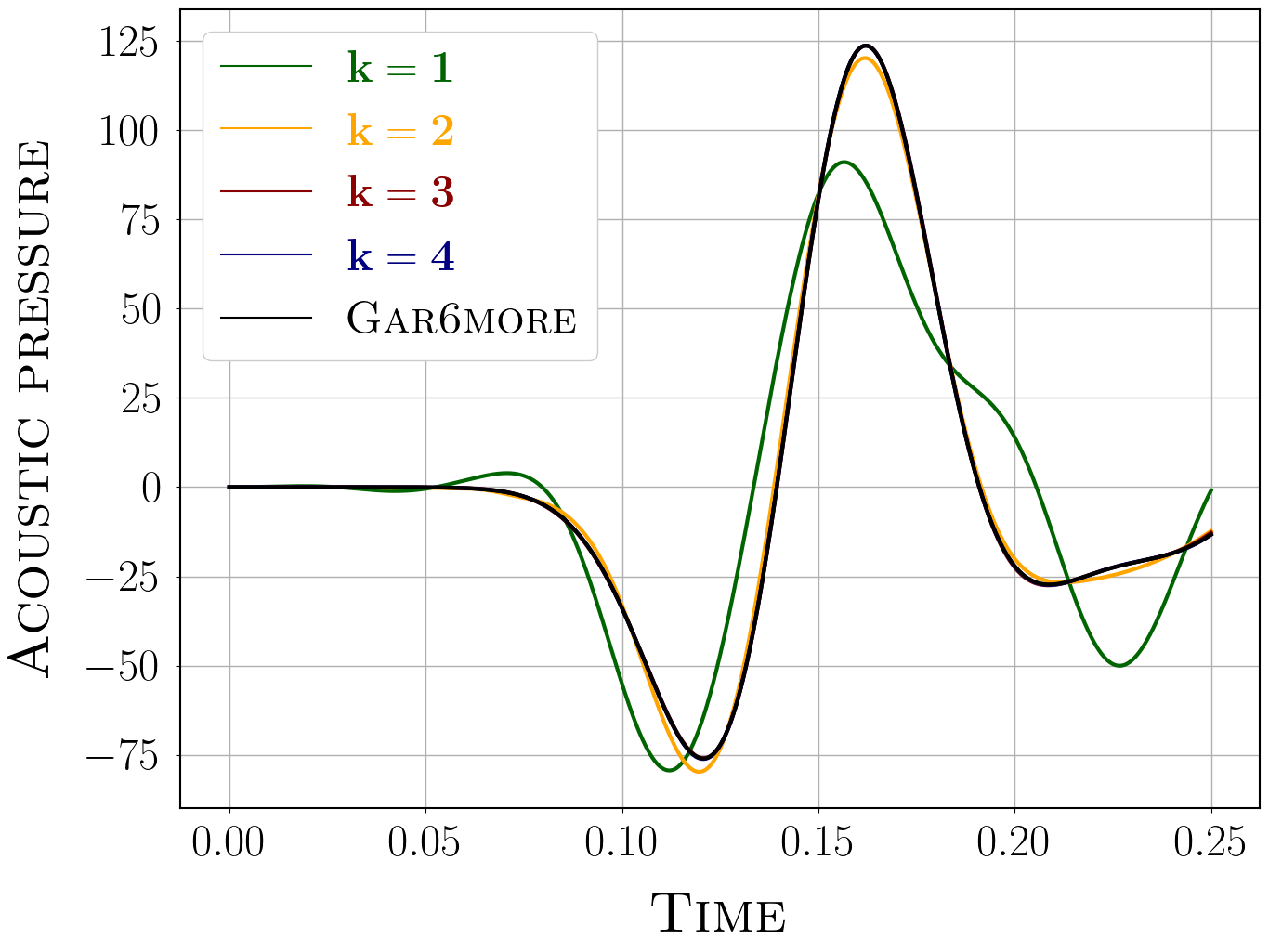}
\hspace{1.5cm}
\includegraphics[width=\a\textwidth]{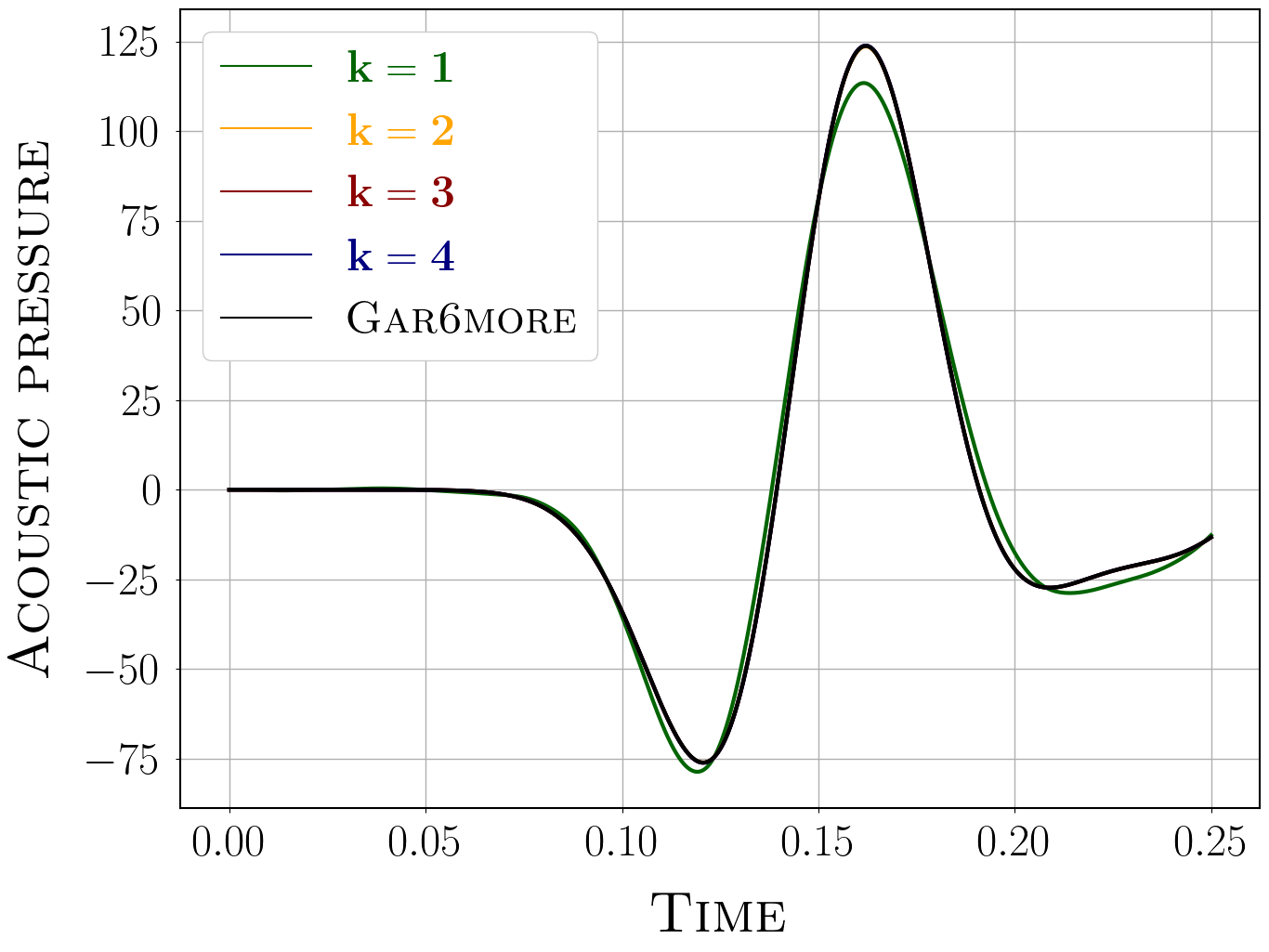}\\[0.25cm]
\includegraphics[width=\a\textwidth]{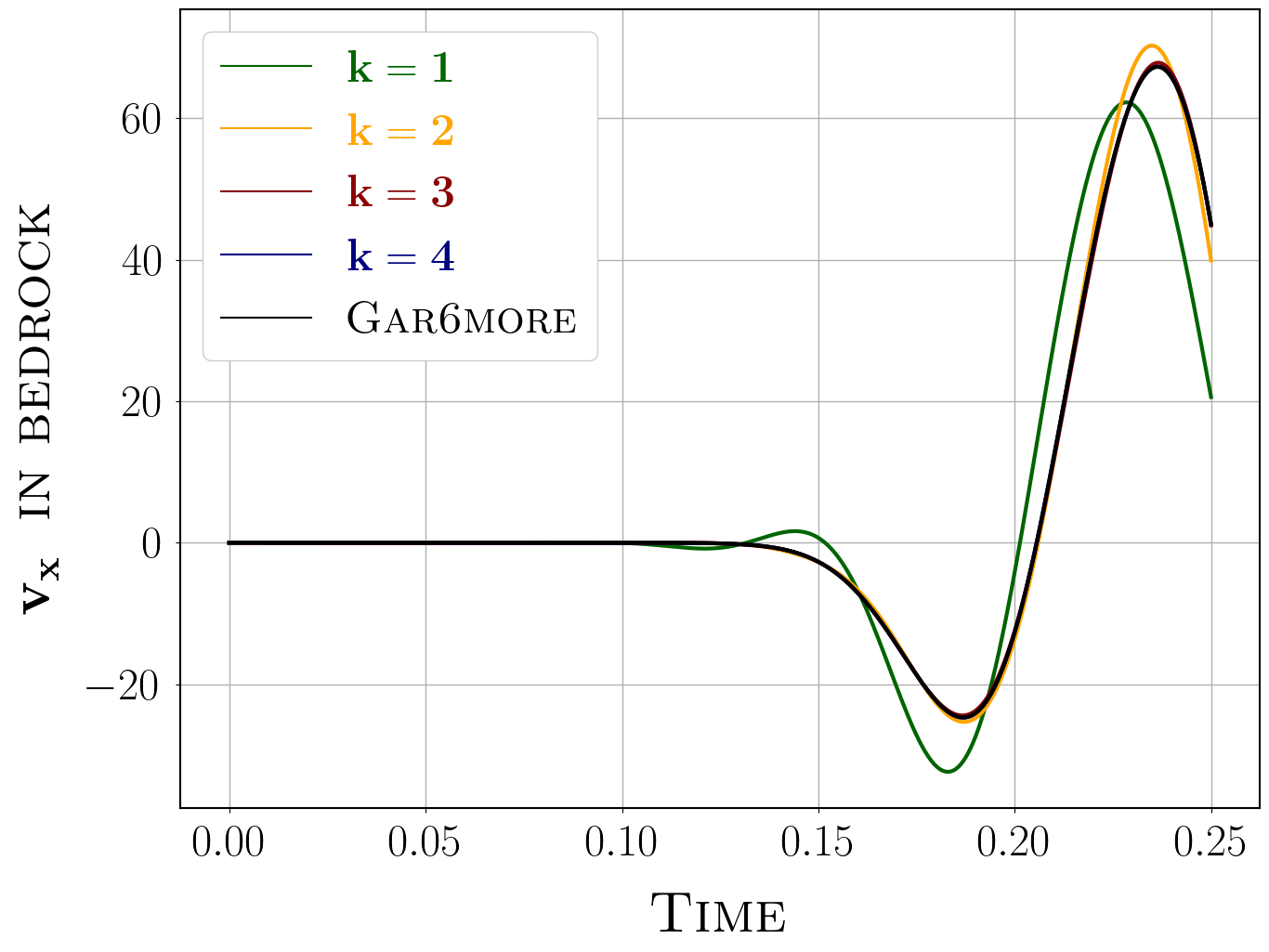}
\hspace{1.5cm}
\includegraphics[width=\a\textwidth]{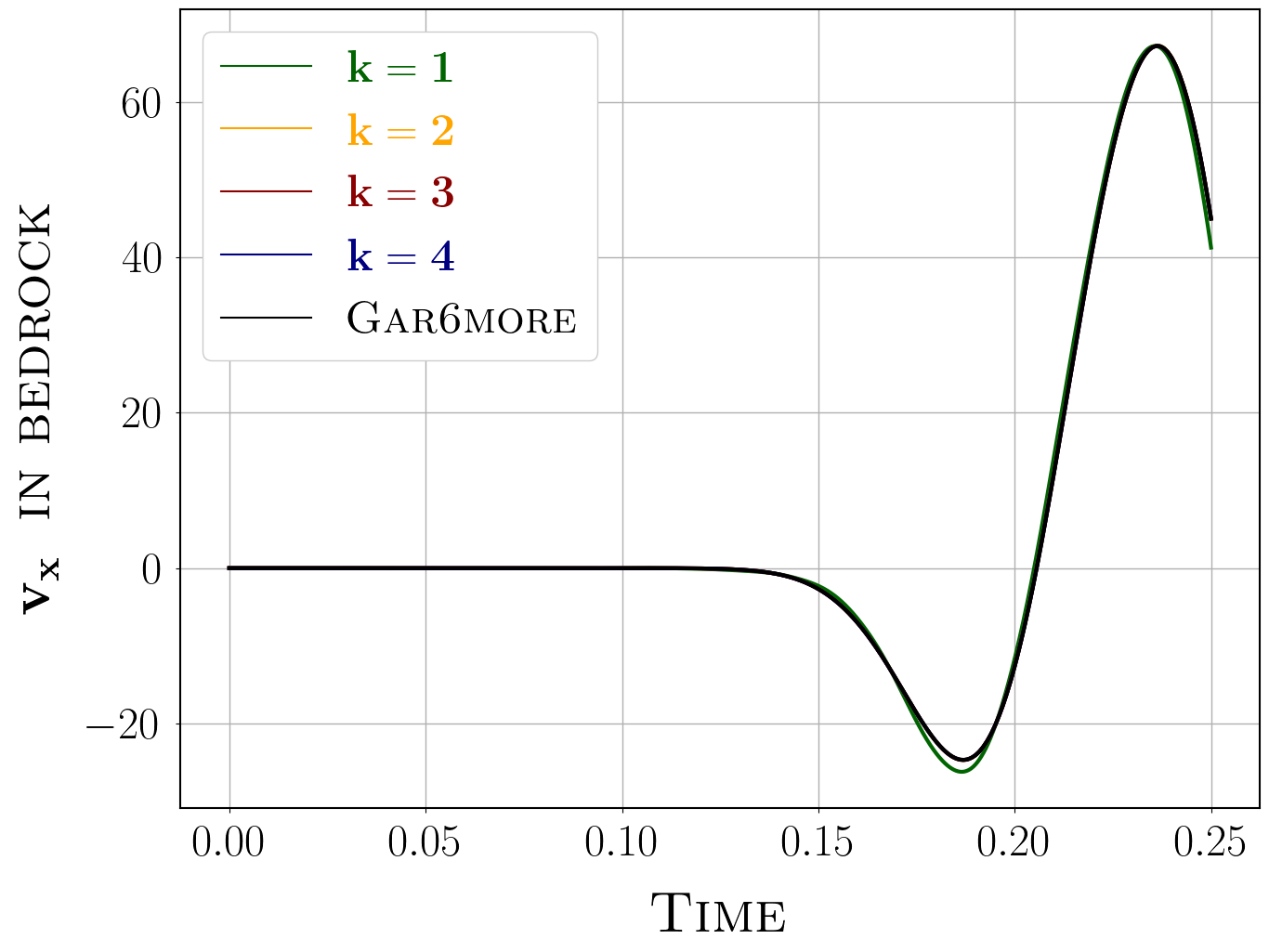}\\[0.25cm]
\includegraphics[width=\a\textwidth]{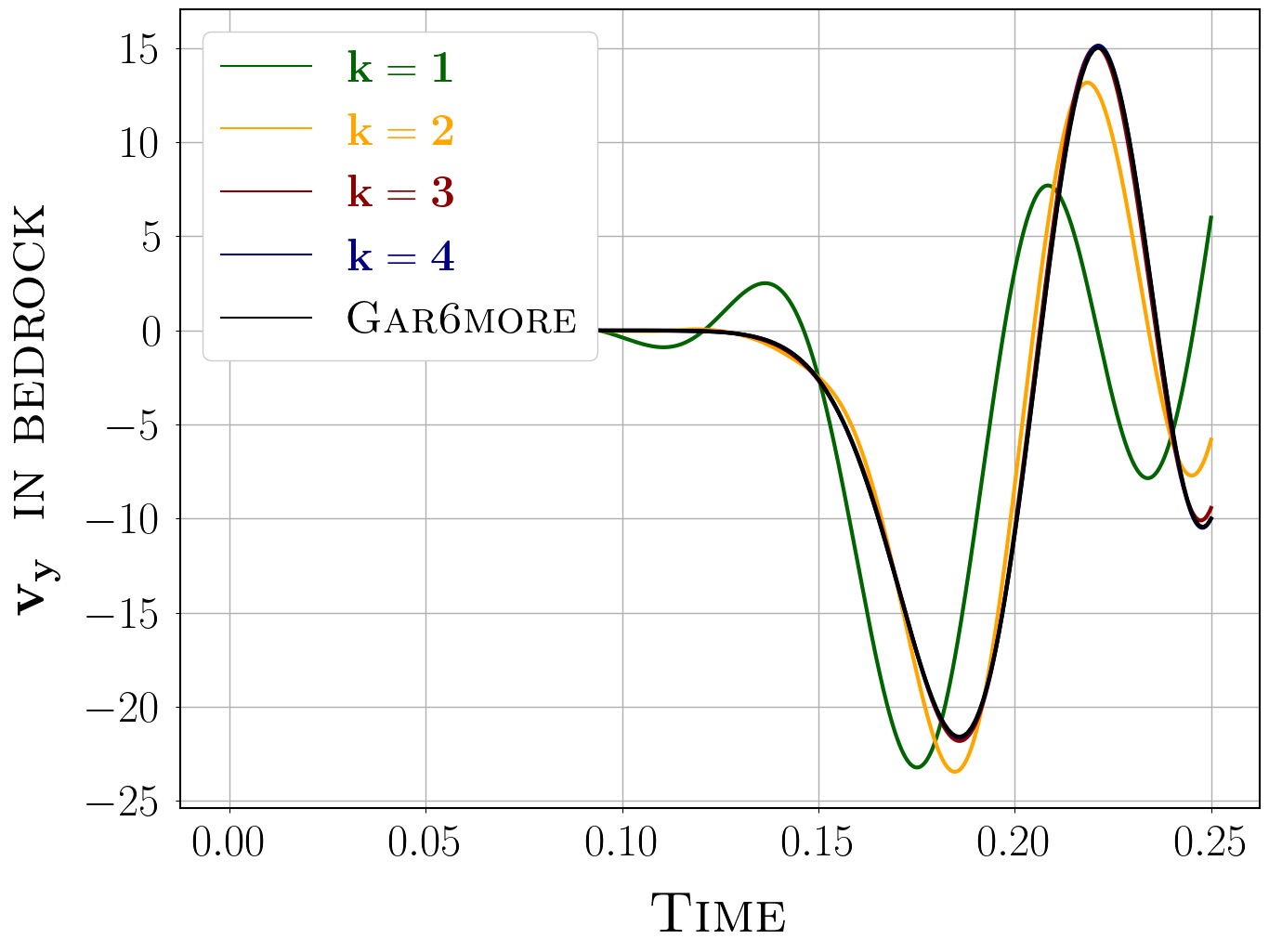}
\hspace{1.5cm}
\includegraphics[width=\a\textwidth]{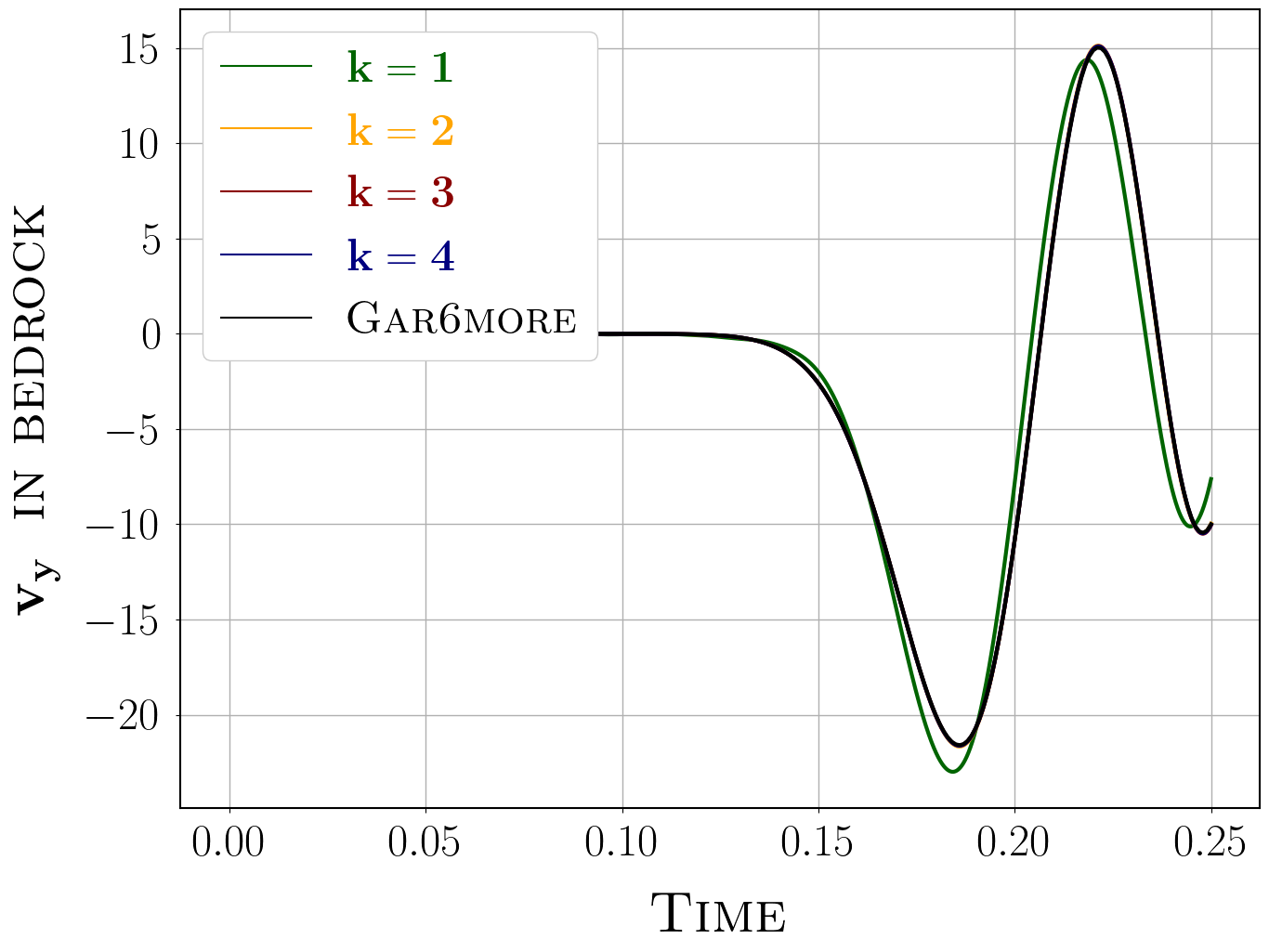}
\caption{Ricker wavelet with homogeneous material properties (see (\ref{academic_properties})). Comparison of the solution over time with the semi-analytical solution at sensors $\cal{S}^{\sc{f}}$ ($1^{st}$ row), and $\cal{S}^{\sc{s}}$ ($2^{nd}$ and $3^{rd}$ rows) for $n = 9$ and $\ell = 4$ (left column) and $\ell = 5$ (right column).}
\label{Gar6more_comparison_academic}
\end{figure}

\ifHAL
\FloatBarrier
\else
\fi
\subsubsection{Realistic (contrasted) setting}

We now investigate a test case with a strong property contrast focusing on two cases corresponding to granite and water (\ref{granite_eau}) and granite and air (\ref{granite_air}):
\begin{subequations}
\begin{alignat}{5}
\label{granite_eau}
& \rho^{\sc{s}} = 2.6 \rho^{\sc{f}} = 1.3, & \qquad & c_{\sc{p}}^{\sc{s}} = 4 c_{\sc{p}}^{\sc{f}} = 2, & \qquad & c_{\sc{s}}^{\sc{s}} = 2 c_{\sc{p}}^{\sc{f}} = 1,\\ 
& \rho^{\sc{s}} = 2200 \rho^{\sc{f}} = 800, & \qquad & c_{\sc{p}}^{\sc{s}} = 17.5c_{\sc{p}}^{\sc{f}} = 6.36, & \qquad & c_{\sc{s}}^{\sc{s}} = 9 c_{\sc{p}}^{\sc{f}} = 3.27.
\label{granite_air}
\end{alignat} 
\end{subequations}
We report the solution for times $t \in [0,0.5]$ at the two sensors $\cal{S}^{\sc{f}} := (-0.05,0.1)$ and $\cal{S}^{\sc{s}} := (-0.05, -0.1)$. \hyperref[Gar6more_comparison_contrast_eau]{\Cref{Gar6more_comparison_contrast_eau}} reports the results for case (\ref{granite_eau}) and \hyperref[Gar6more_comparison_contrast_air]{\Cref{Gar6more_comparison_contrast_air}} those for (\ref{granite_air}) on two meshes: $\ell = 5$ on the left column and $\ell = 6$ on the right column (this corresponds to one more level of refinement than for the low-contrast case). 
\ifHAL
\renewcommand{\a}{0.4}
\else
\renewcommand{\a}{0.36}
\fi
\begin{figure}[!htb]
\centering
\includegraphics[width=\a\textwidth]{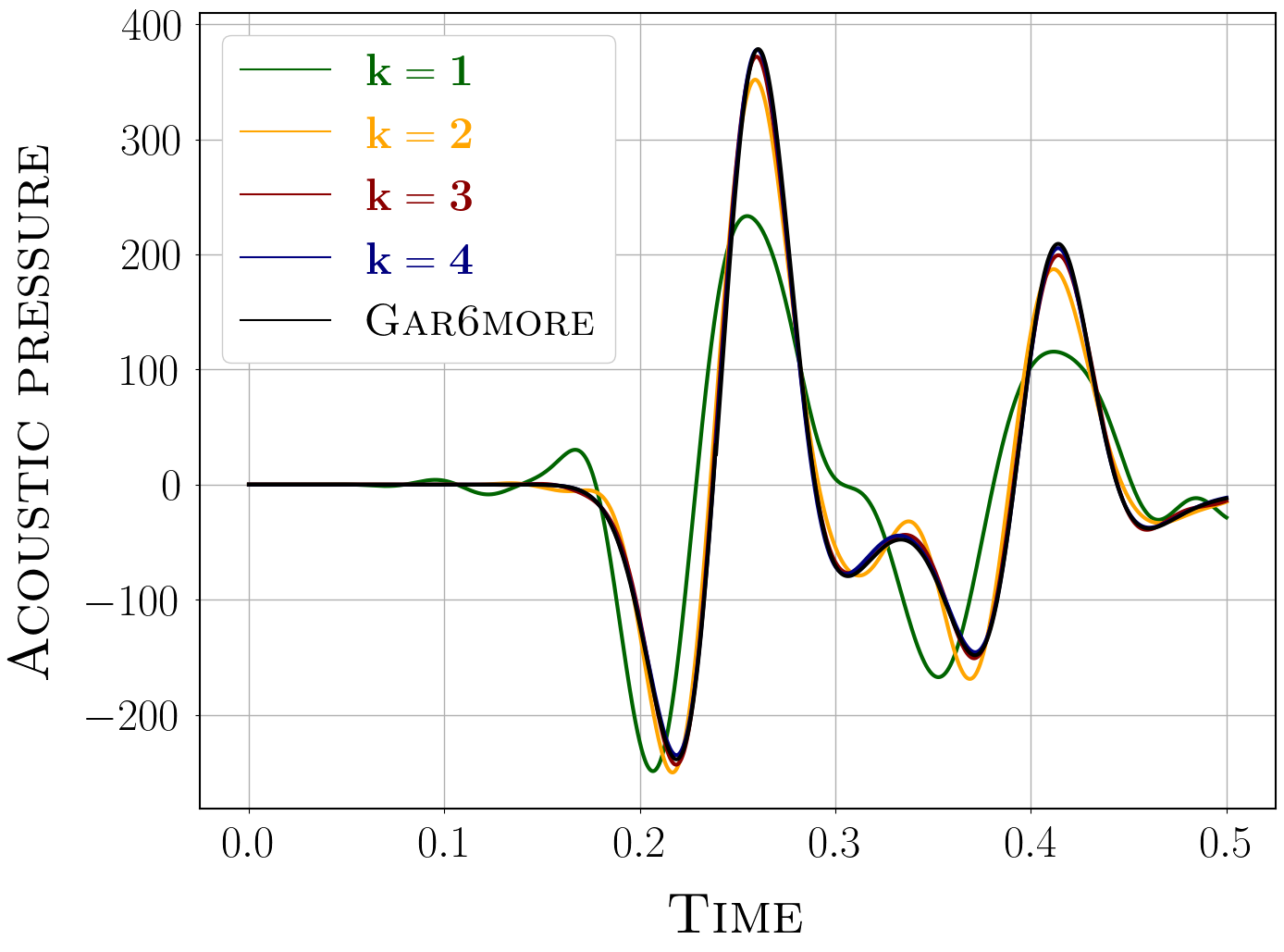}
\hspace{1.5cm}
\includegraphics[width=\a\textwidth]{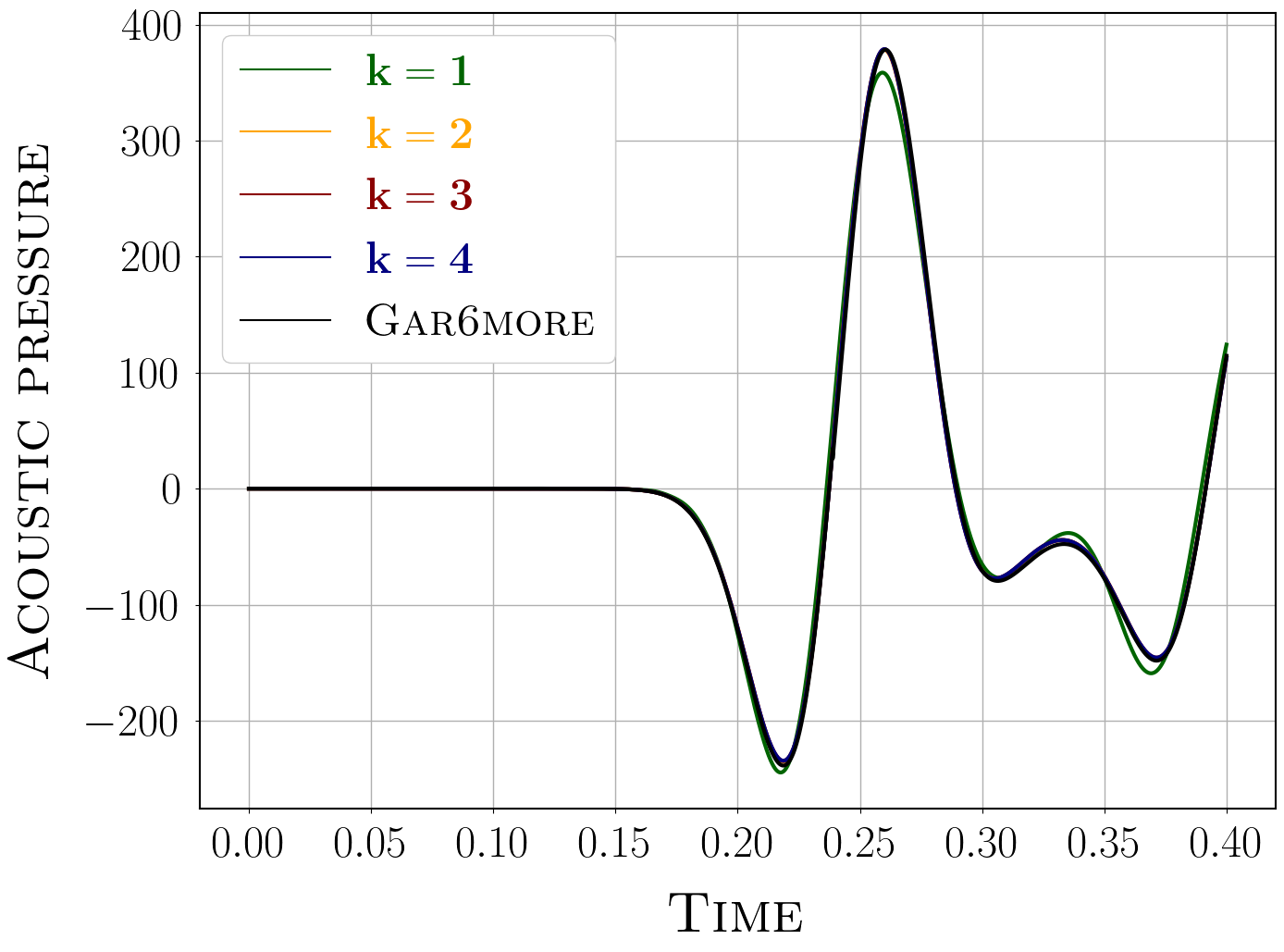}\\[0.25cm]
\includegraphics[width=\a\textwidth]{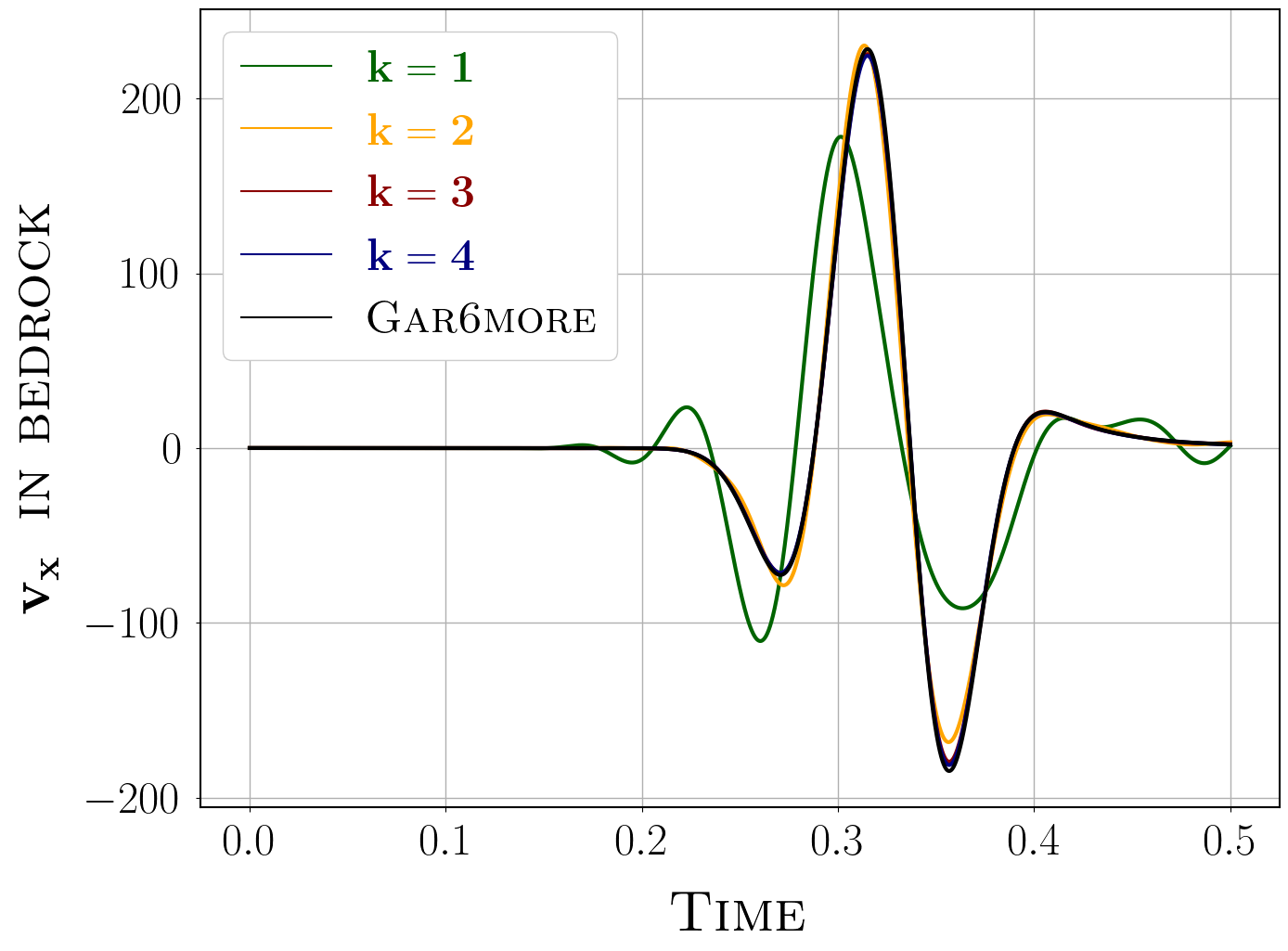}
\hspace{1.5cm}
\includegraphics[width=\a\textwidth]{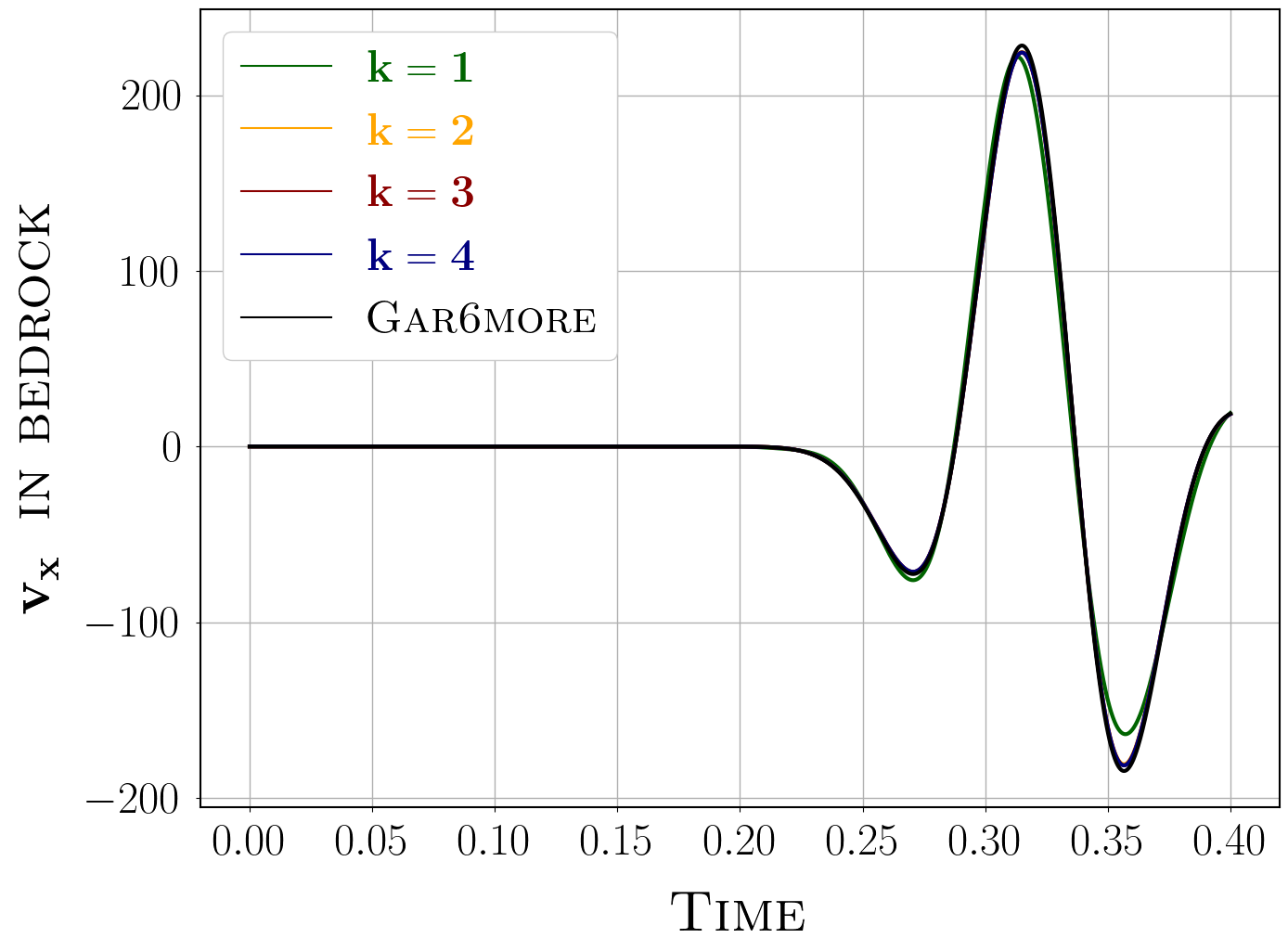}\\[0.25cm]
\includegraphics[width=\a\textwidth]{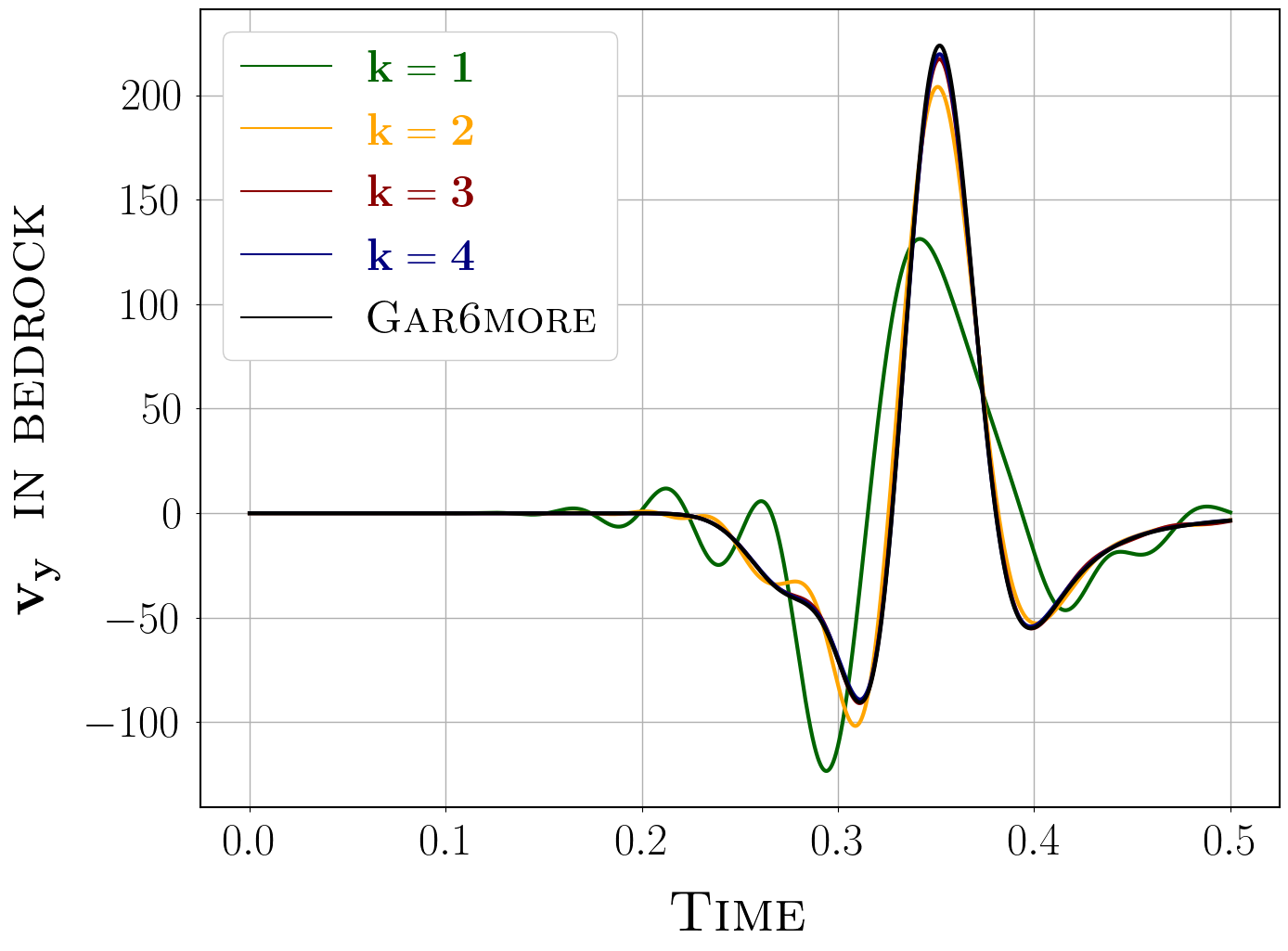}
\hspace{1.5cm}
\includegraphics[width=\a\textwidth]{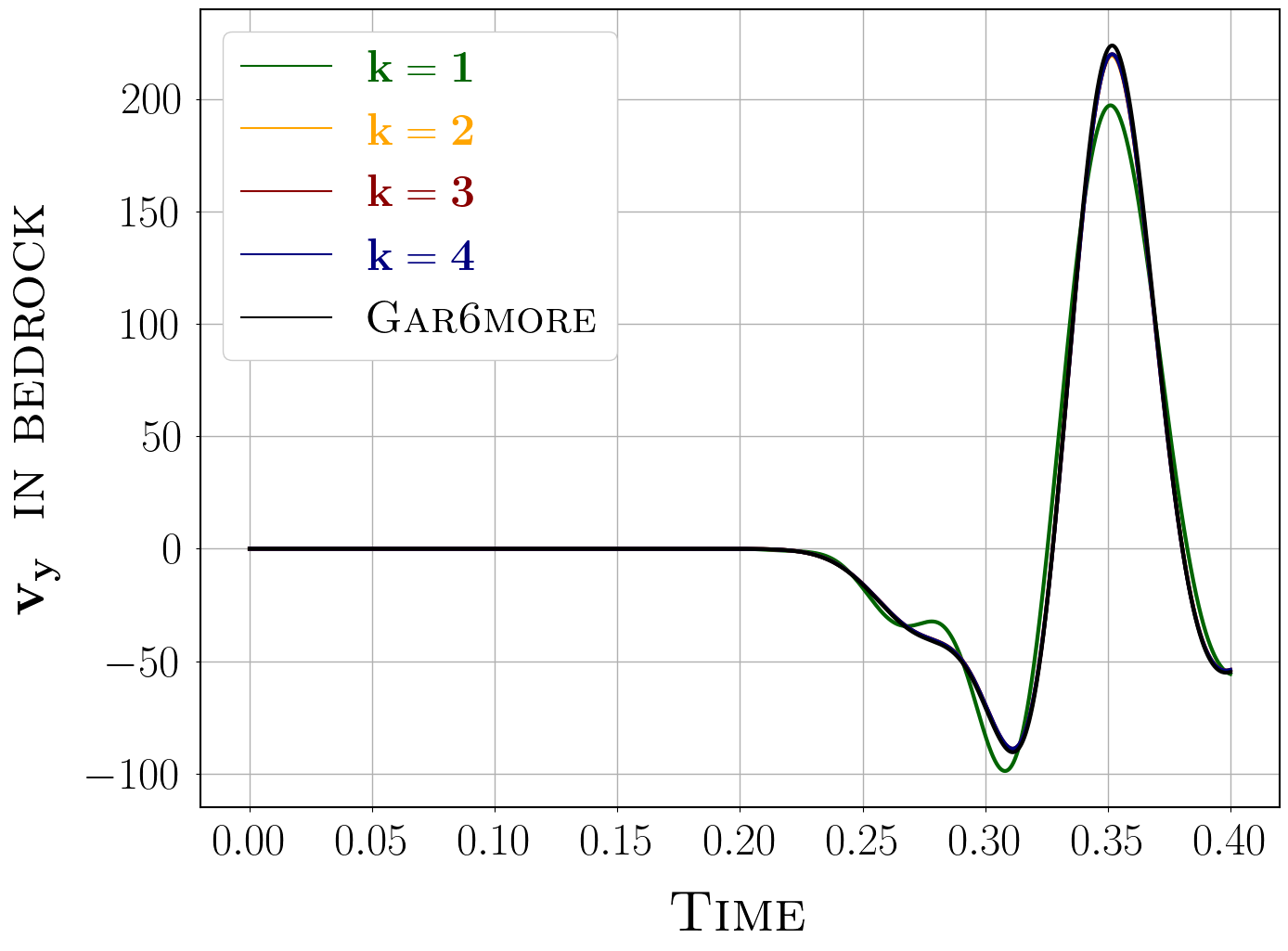}
\caption{Ricker wavelet with granite-water contrast (see (\ref{granite_eau})). Comparison of the solution over time with the semi-analytical solution at sensors $\cal{S}^{\sc{f}}$ ($1^{st}$ row) and $\cal{S}^{\sc{s}}$ ($2^{nd}$ and $3^{rd}$ rows) for $n = 9$ and $\ell = 5$ (left column) and $\ell = 6$ (right column).}
\label{Gar6more_comparison_contrast_eau}
\end{figure}
\ifHAL
\renewcommand{\a}{0.4}
\else
\renewcommand{\a}{0.36}
\fi
\begin{figure}[!htb]
\centering
\includegraphics[width=\a\textwidth]{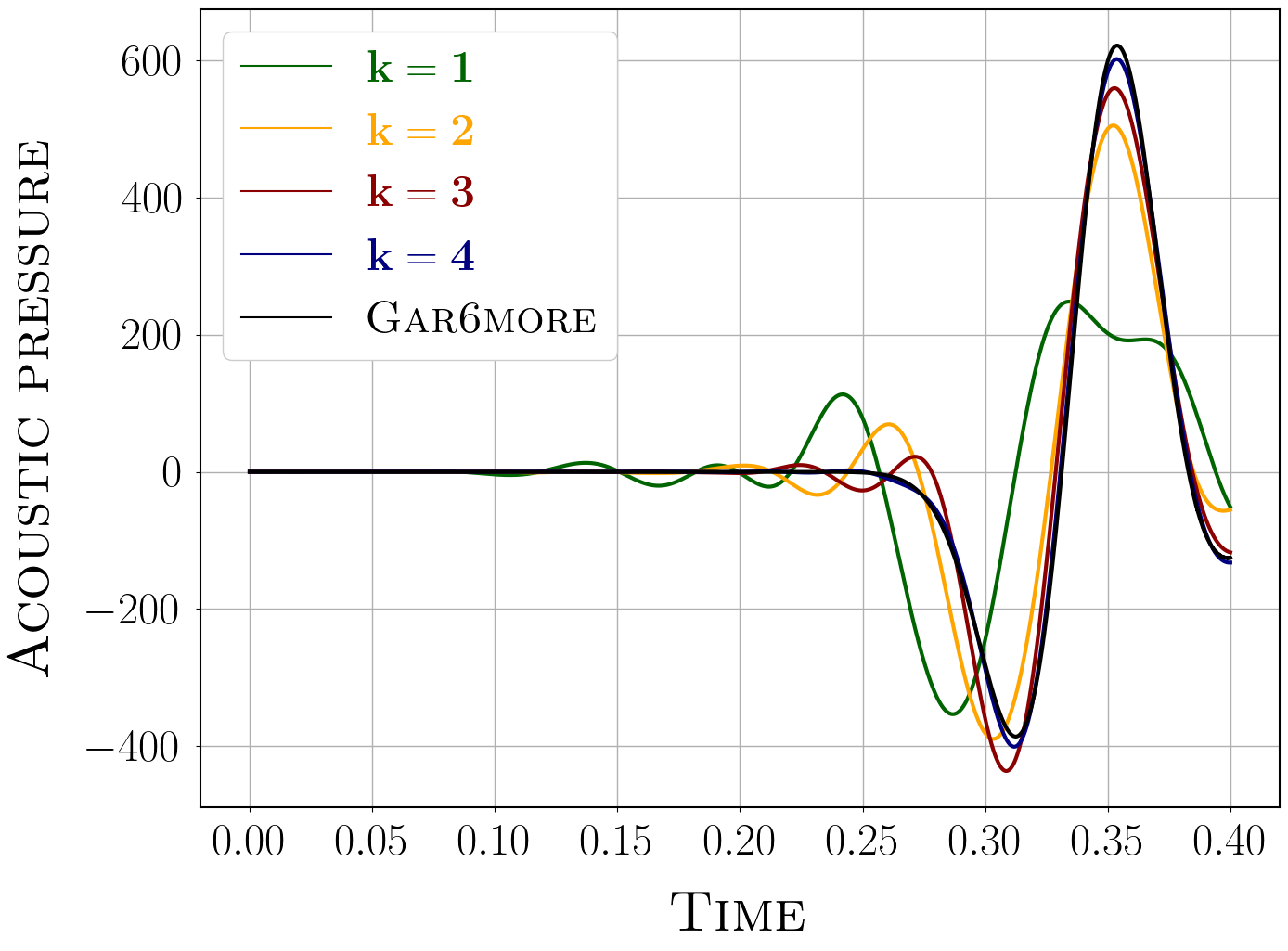}
\hspace{1.5cm}
\includegraphics[width=\a\textwidth]{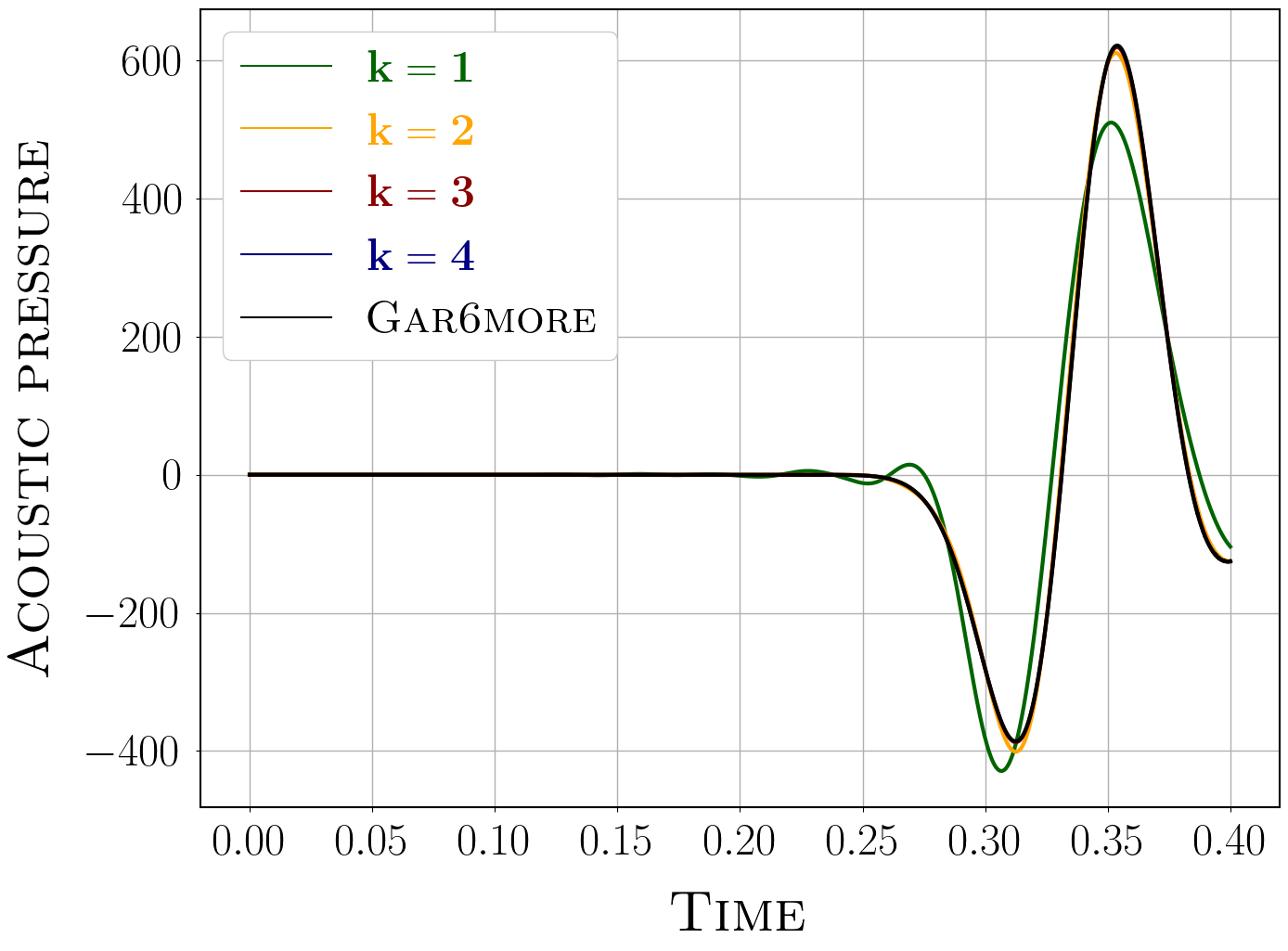}\\[0.25cm]
\includegraphics[width=\a\textwidth]{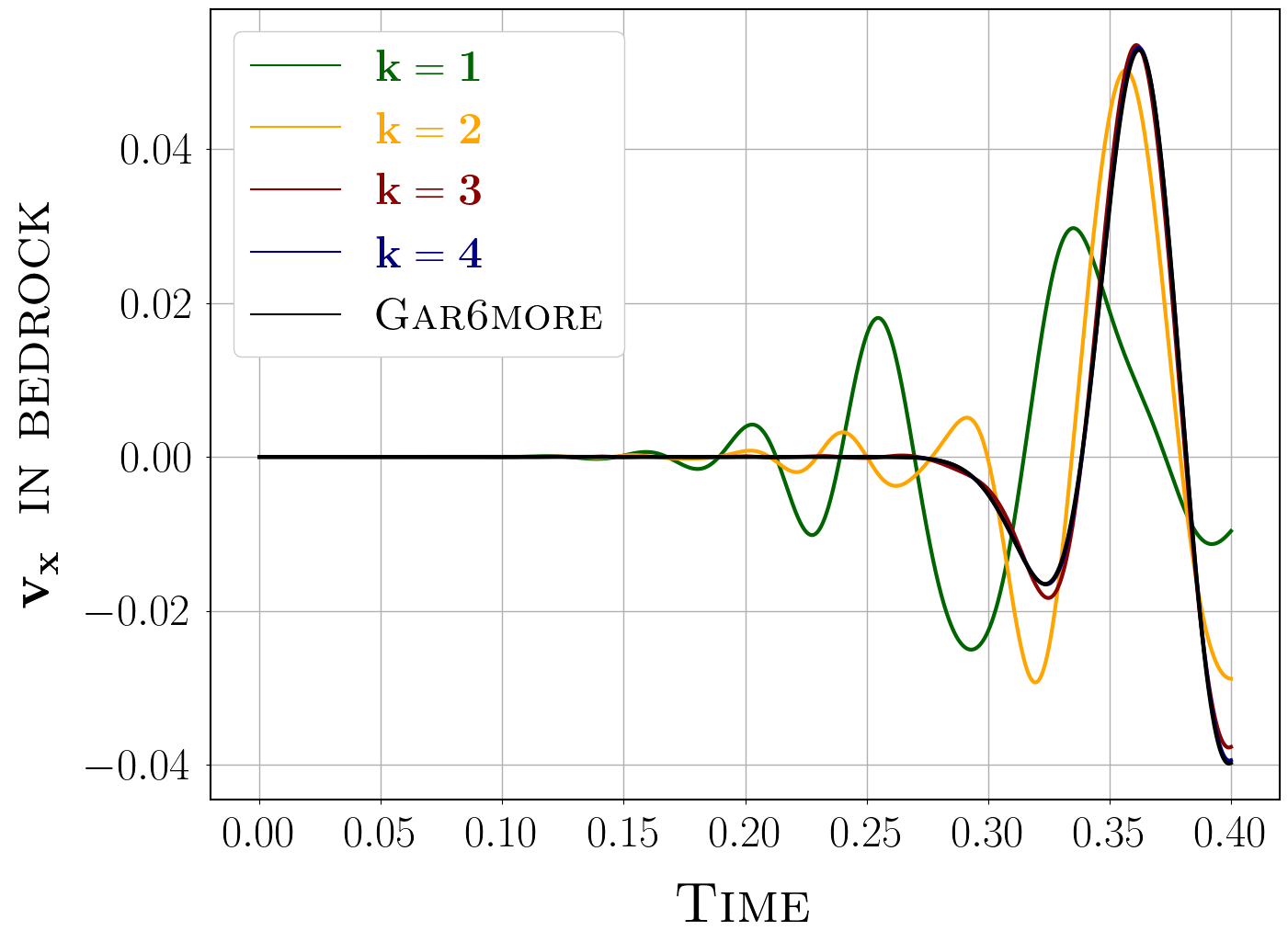}
\hspace{1.5cm}
\includegraphics[width=\a\textwidth]{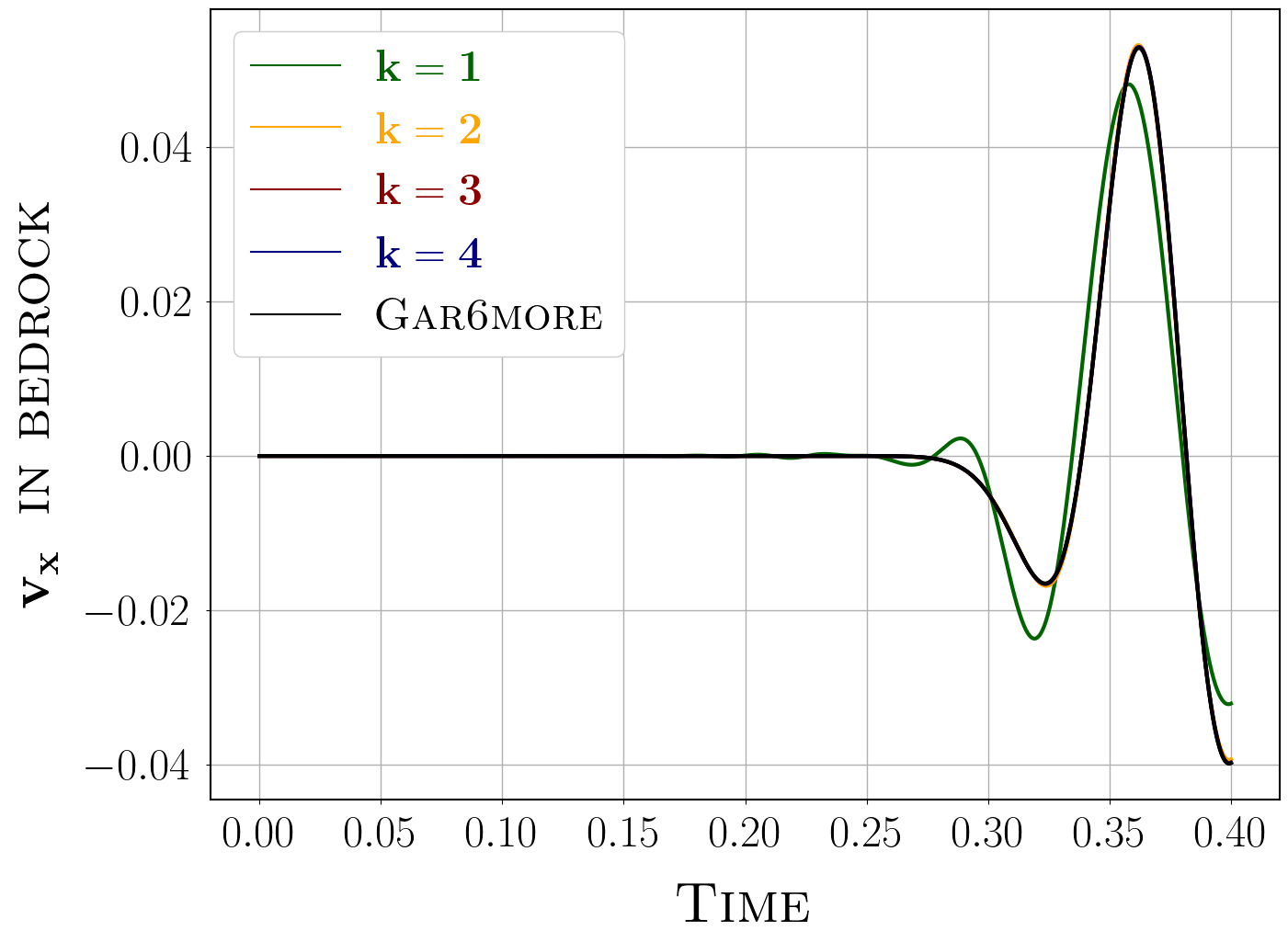}\\[0.25cm]
\includegraphics[width=\a\textwidth]{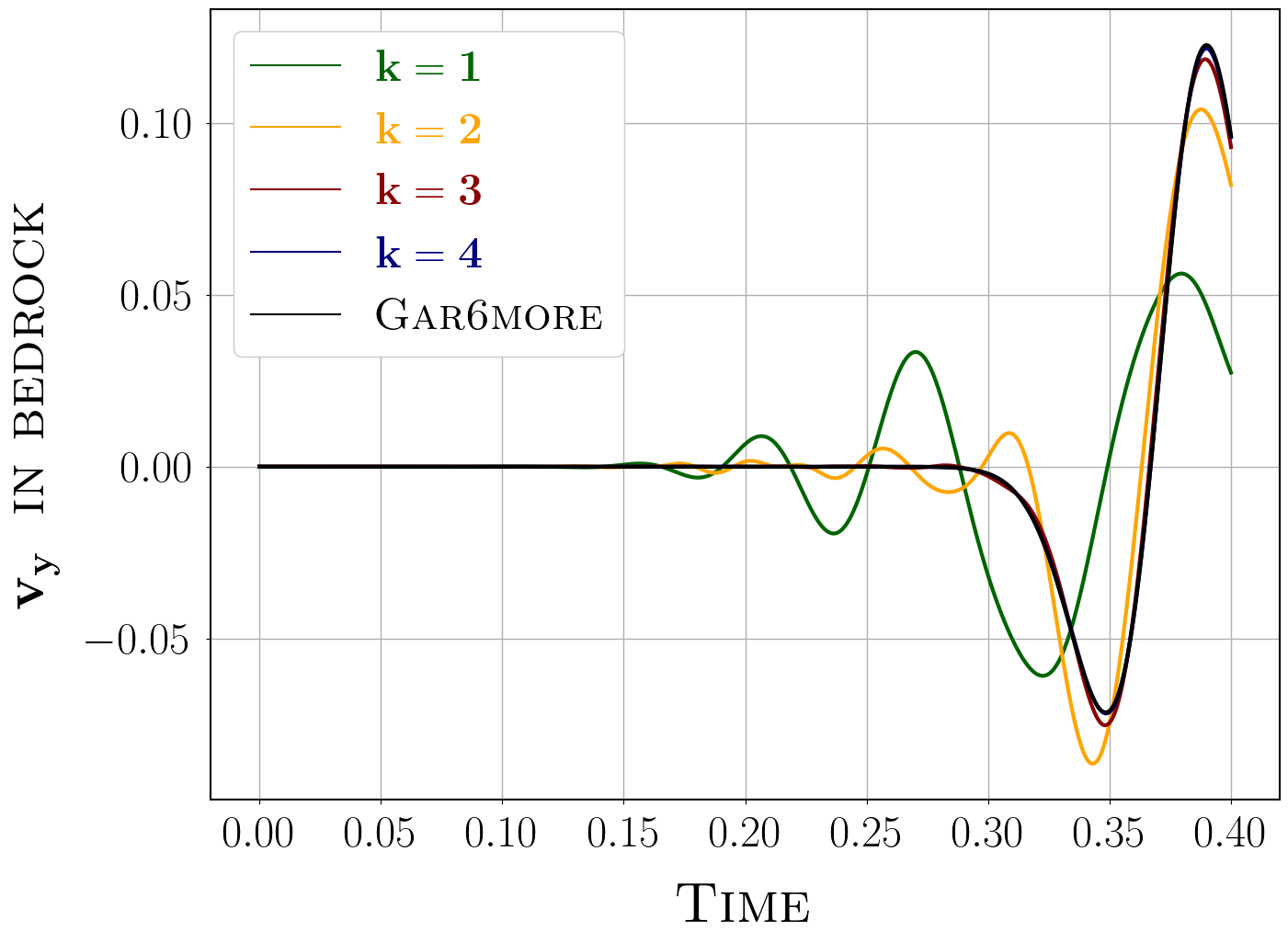}
\hspace{1.5cm}
\includegraphics[width=\a\textwidth]{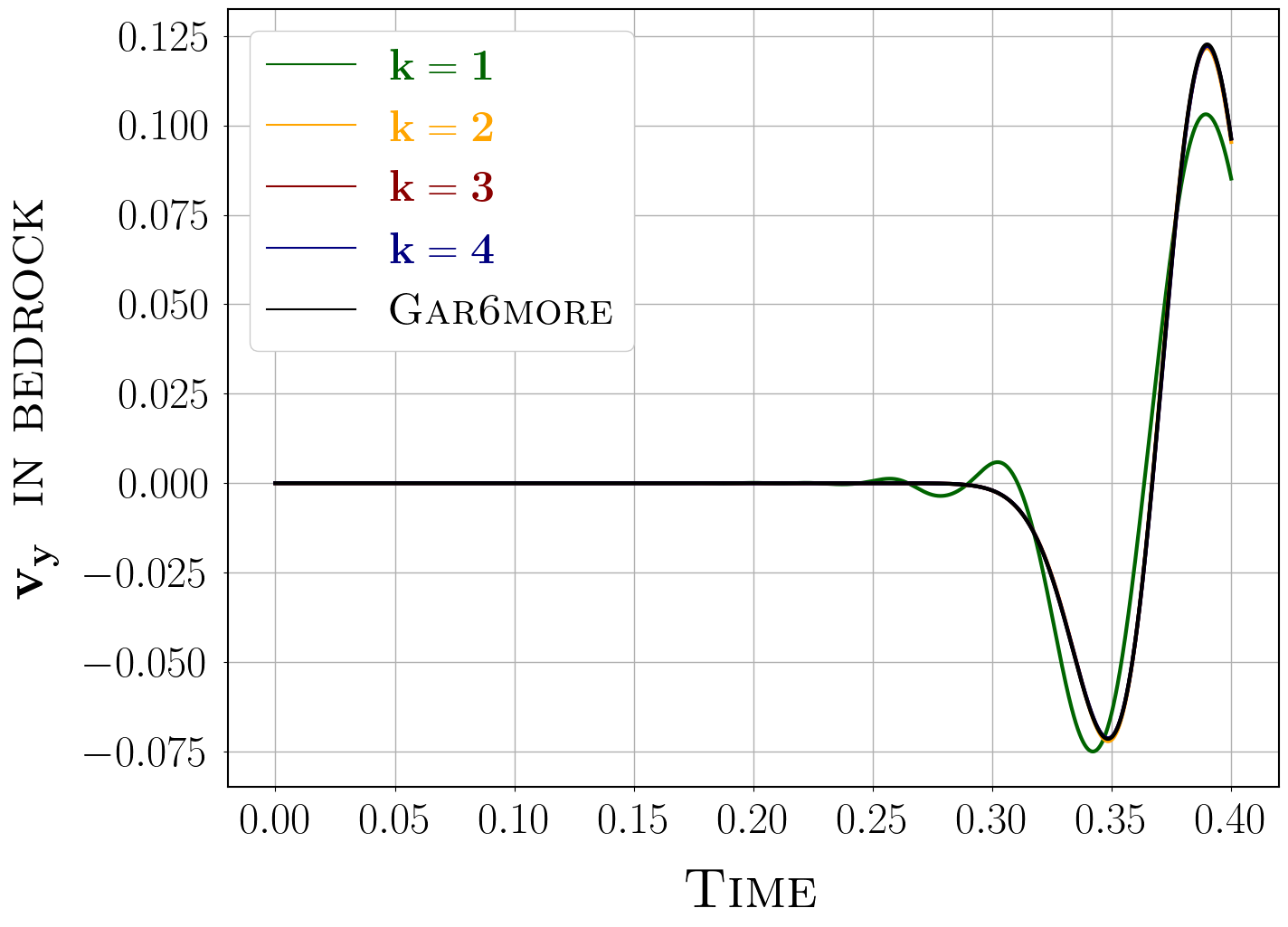}
\caption{Ricker wavelet with granite-air contrast (see (\ref{granite_air})). Comparison of the solution over time with the semi-analytical solution at sensors $\cal{S}^{\sc{f}}$ ($1^{st}$ row) and $\cal{S}^{\sc{s}}$ ($2^{nd}$ and $3^{rd}$ rows) for $n = 9$ and $\ell = 5$ (left column) and $\ell = 6$ (right column).}
\label{Gar6more_comparison_contrast_air}
\end{figure}

The results displayed in Figures \ref{Gar6more_comparison_contrast_eau} and \ref{Gar6more_comparison_contrast_air} exhibit the same characteristics as in the previous case, thus demonstrating that the present scheme effectively handles strong property contrasts by accurately describing the solution (excluding the case $k=1$ on the coarse mesh). Finally, as expected, the greater the contrast, the larger the amplitude difference between the acoustic and elastic signals. Indeed, while the signals in \hyperref[Gar6more_comparison_academic]{\Cref{Gar6more_comparison_academic}} have similar amplitudes, \hyperref[Gar6more_comparison_contrast_eau]{\Cref{Gar6more_comparison_contrast_eau}} and \hyperref[Gar6more_comparison_contrast_air]{\Cref{Gar6more_comparison_contrast_air}} show that as the contrast increases, the signals in the receiving medium become weaker due to strong wave reflections at the interface.

In Figures \ref{energy_academic_pulse} and \ref{energy_water_granit_pulse}, we focus on the time evolution of the discrete energy of the global system. In \hyperref[energy_academic_pulse]{\Cref{energy_academic_pulse}}, the left panel shows the energy repartition as a function of time for $t \in [0,10]$, polynomial order $k=3$ space refinement $\ell = 6$, and time refinement $n = 9$. In the right panel, we study the relative energy loss as \cred{the ratio of energy at times $t \in [0,1]$ with the energy at initial time for different} polynomial degrees $k \in \{1,2,3\}$ and space refinement \cred{levels} $\ell \in \{4,5,6\}$. In the left panel, we observed that, in the absence of contrast, the energy initially concentrated in the acoustic subdomain is partially transferred to the elastic subdomain. On longer time scales, energy oscillates with a moderate amplitude around an equal distribution between the two subdomains. In the right panel, we can see that increasing the polynomial order and/or the space refinement level significantly reduces energy dissipation. For all reasonable discretizations, the energy dissipation stays below $1\%$ for all $t \in [0,1]$. In the left panel of \hyperref[energy_water_granit_pulse]{\Cref{energy_water_granit_pulse}}, we show the energy repartition for the contrasted case (\ref{granite_eau}) with $\ell=6$ and $k=3$. We observe that, owing to the increase in property contrast, significantly less energy is transmitted from the acoustic medium to the elastic medium, which is consistent with the decrease in signal amplitude observed in Figures \ref{Gar6more_comparison_contrast_eau} and \ref{Gar6more_comparison_contrast_air}. The same test cases were conducted with the initial pulse located in the elastic medium, and a similar distribution of the energy was observed between the emitting medium and the receiving medium. For the sake of brevity, these results are not reported here.
\begin{figure}[!htb]
\centering 
\includegraphics[height=0.4\textwidth]{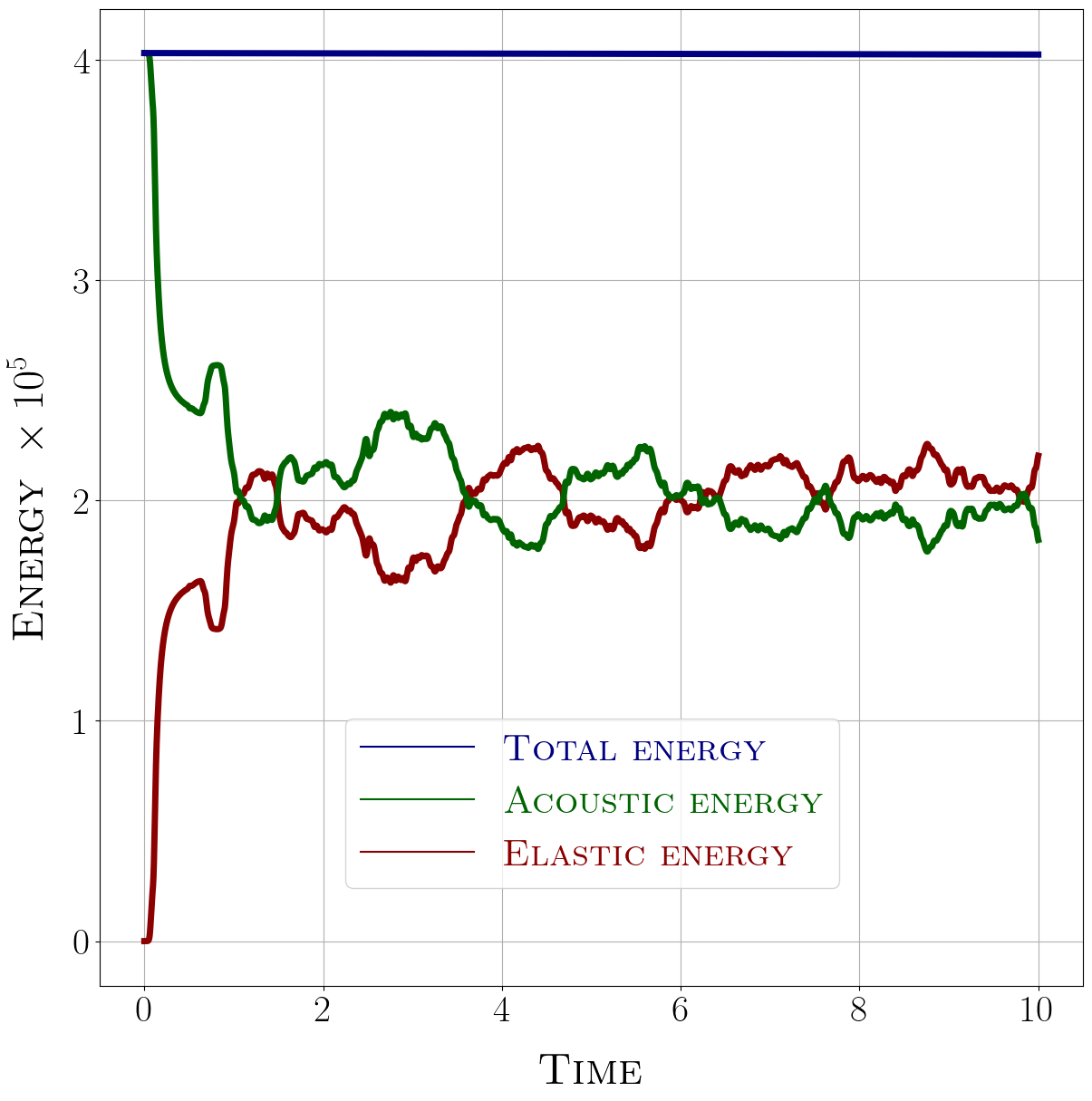}
\hspace{1cm}
\includegraphics[height=0.4\textwidth]{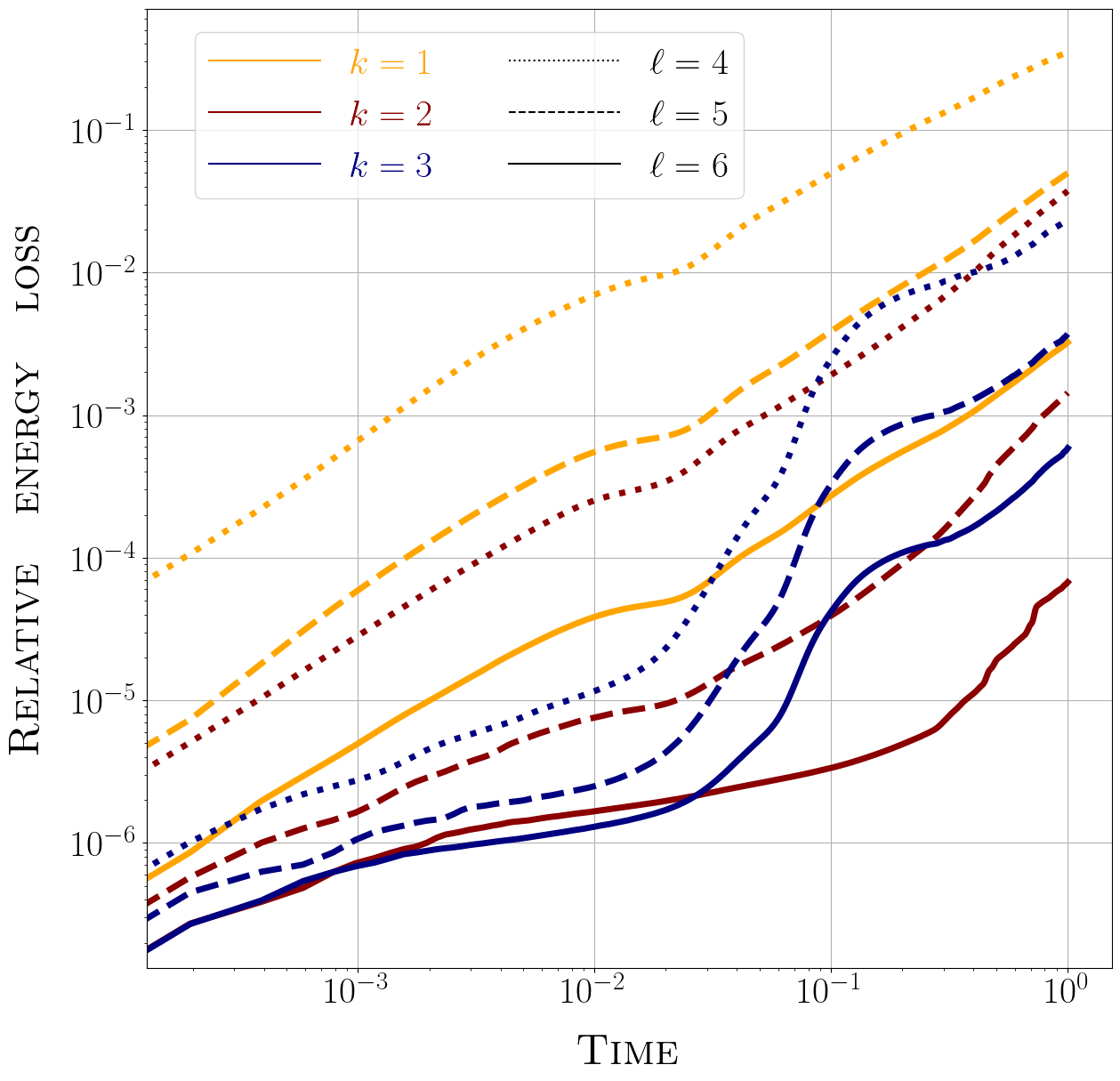}
\caption{SDIRK$(3,4)$ scheme with $n=9$. \textbf{Left:} Energy repartition as a function of the time for $k=3$ and $\ell = 6$. \textbf{Right:} Relative energy loss as function of the time for $k \in \{1,2,3\}$ and $\ell \in \{4,5,6\}$.}
\label{energy_academic_pulse}
\end{figure}
\begin{figure}[!htb]
\centering
\includegraphics[width=0.425\textwidth]{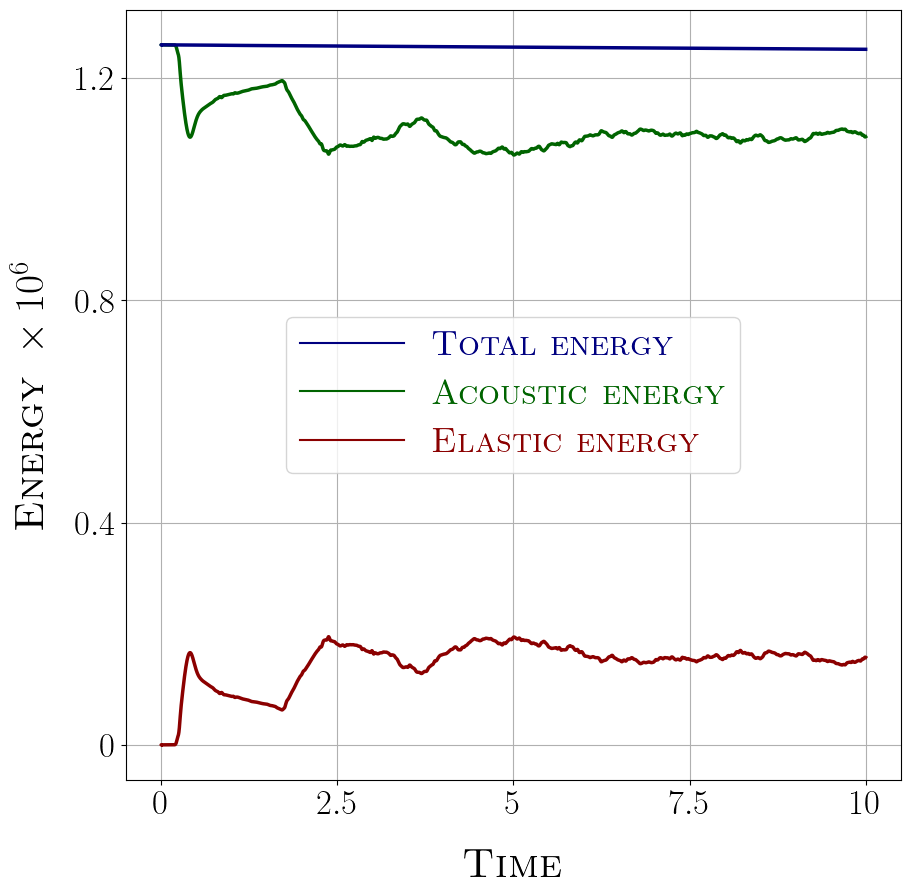}
\hspace{1cm}
\includegraphics[width=0.425\textwidth]{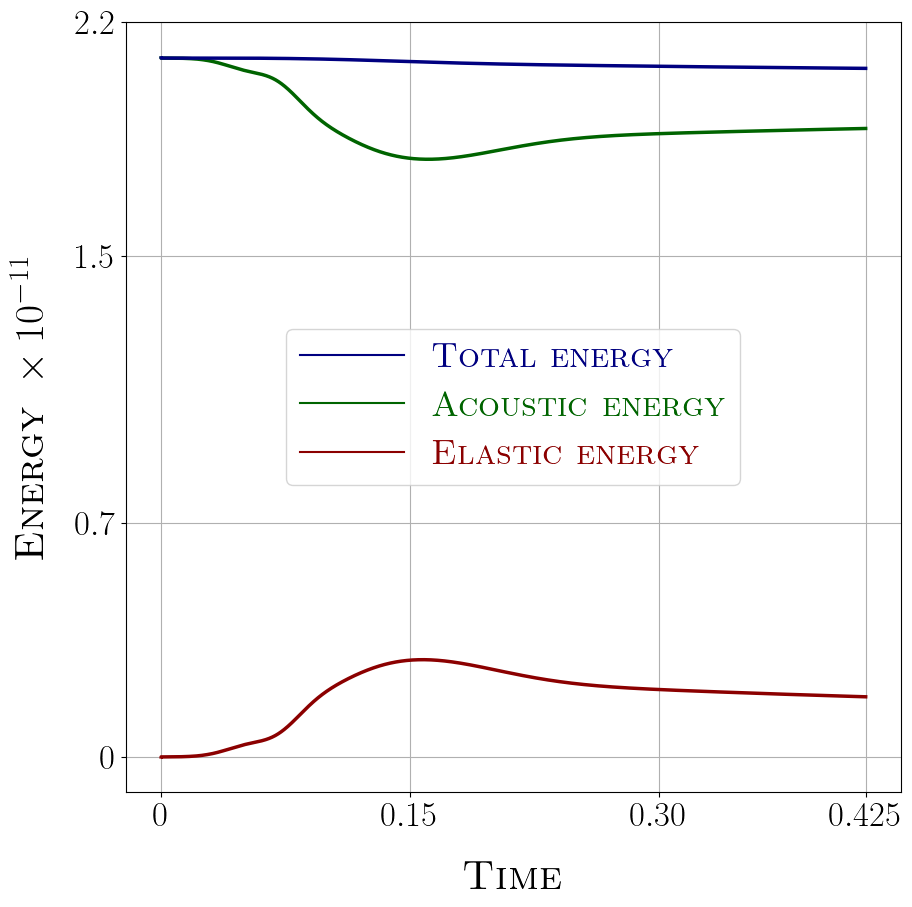}
\caption{Ricker wavelet for granite-water test case: Energy repartition as a function of the time predicted by SDIRK$(3,4)$ scheme. \textbf{Left:} test case (\ref{granite_eau}), $k=3$, $n=9$ and $\ell = 6$. \textbf{Right:} test case (\ref{real_granite_water}), $k=3$, $n=8$, $h_x=8.93~\rm{m}$ and $h_y=6.25~\rm{m}$.}
\label{energy_water_granit_pulse}  
\end{figure}

To conclude, we perform the granite-water test case (\ref{granite_eau}) with realistic values for the material properties and the geometry. We set 
\begin{equation}
\begin{alignedat}{2}
\rho^\sc{f} & := 1025~\rm{kg.m}^{-3}, \qquad & c_{\sc{p}}^{\sc{f}} & := 1500~\rm{m.s}^{-1}, \\
\rho^\sc{s} & := 2690~\rm{kg.m}^{-3}, \qquad & c_{\sc{p}}^\sc{s} & := 6000~\rm{m.s}^{-1}, \qquad c_{\sc{s}}^{\sc{s}} := 3000~\rm{m.s}^{-1},
\end{alignedat}
\label{real_granite_water}
\end{equation}
as well as $L:=5000~\rm{m}$, $H:=3500~\rm{m}$, $H_e:=2500~\rm{m}$ for the dimensions of the domain and $x_c:=0~\rm{m}$, $y_c:=500~\rm{m}$ for the center of the pulse in the acoustic subdomain, and a simulation time $T_{\mathrm{f}} := 0.425 ~\mathrm{s}$. 
\hyperref[snapshot_granit_pulse]{\Cref{snapshot_granit_pulse}} displays the two-dimensional distributions of the pressure in the acoustic region and of the \cred{Euclidean} velocity norm in the elastic region at times $t \in \{0.1275,0.3825\}$ predicted by the SDIRK$(3,4)$ scheme, with computational parameters $k=3$ and $n = 8$. A quadrangular mesh is considered with $h_x=8.93~\rm{m}$ and $h_y=6.25~\rm{m}$. The computational domain has been chosen sufficiently large in order to avoid the bouncing off the walls due to the homogeneous Dirichlet conditions and to allow the waves to develop. We can see that the simulation captures well the penetration of the wave into the elastic domain. The lateral conical wavefronts are accurately represented, along with the interface, Rayleigh-type waves, which are characterized by the constructive interferences between P-waves and polarized S-waves at the interface. The right panel of \hyperref[energy_water_granit_pulse]{\Cref{energy_water_granit_pulse}} shows the energy repartition related to this test case.
\begin{figure}[!htb]
\centering
\includegraphics[width=0.495\textwidth]{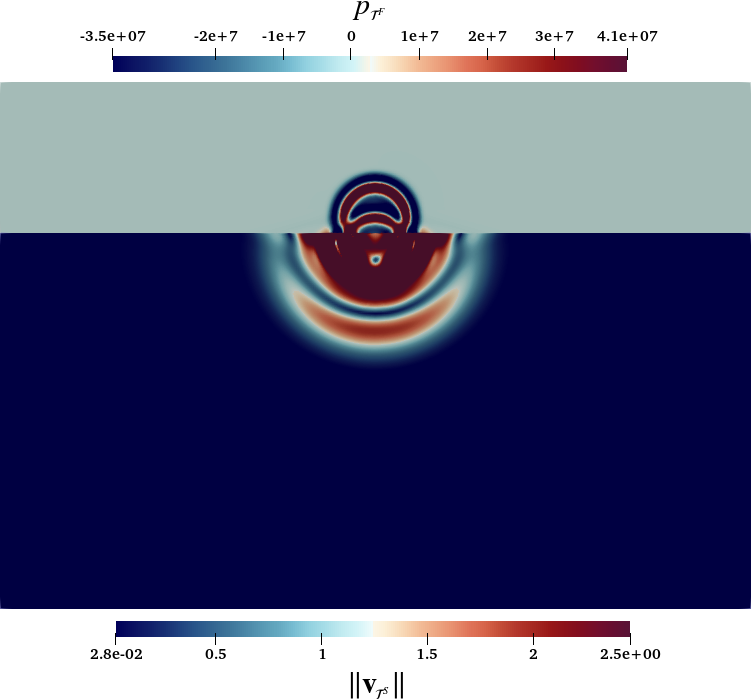}
\includegraphics[width=0.495\textwidth]{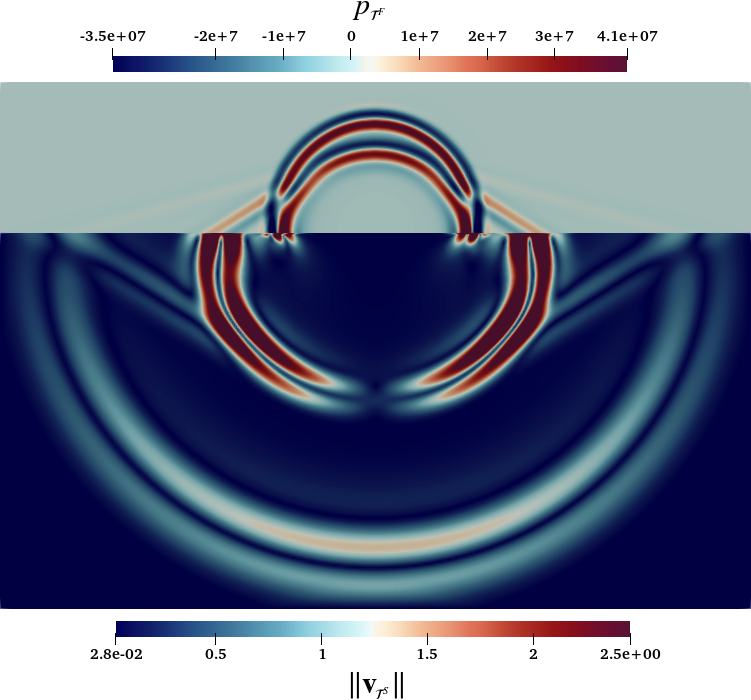}
\caption{Ricker wavelet for granite-water test case (\ref{real_granite_water}): Spatial distribution of the acoustic pressure (upper side) and the elastic velocity norm (lower side) \cred{at times} $t \in \{0.1275, 0.3825\}$ predicted by SDIRK$(3,4)$ with mixed-order setting, $\cal{O}(\frac{1}{h})$-stabilization, $k=3$, $n=8$, $h_x=8.93~\rm{m}$ and $h_y=6.25~\rm{m}$.}
\label{snapshot_granit_pulse}
\end{figure}

In conclusion, all these results highlight the robustness of the proposed scheme in accurately modeling wave propagation through media with significant density and wave velocity contrasts.

\cred{\section*{Conclusion}
In this paper, we studied the elasto-acoustic coupled wave problem in first-order form approximated using dG (for the dual variable) and HHO (for the primal variable) space discretizations including several variants for HHO: equal- or mixed-order and stabilization parameters $\mathcal{O}(1)$ or $\mathcal{O}(\frac{1}{h})$. We performed a space semi-discrete analysis based on the H$^+$ interpolation operator for the dual variable instead of the standard $L^2$ projection operator. This choice makes the time-continuous analysis simpler than in \cite{BDES_2021}. Compared to dG which stabilizes both primal and dual variables and achieves $\mathcal{O}(h^{k+\frac{1}{2}})$ convergence rates (see, e.g. \cite{BEF_2010}), we stabilize only the primal variable and achieve $O(h^{k+1})$ convergence rates. The spectral analysis study in Section~6.1 showcases the effect of different stabilization weights on the CFL condition.  In the numerical experiments of Section~6.3, we considered both implicit and explicit time-stepping schemes, and  confirmed the theoretical convergence rates on an academic test case. Moreover, we observed $\mathcal{O}(h^{k+2})$ convergence rates for the primal variable in the $L^2$-norm when using the mixed-order HHO method with $\mathcal{O}(\frac{1}{h})$-stabilization. In Section~6.4, we  presented a realistic Ricker wavelet test case with contrasted material properties and demonstrated that our scheme accurately captures wave propagation across interfaces. An interesting perspective for further work is to explore nonlinear coupled wave models. One of the
advantages of HHO with respect to dG in this context is that the integration of nonlinear behavior laws is
only required at the quadrature nodes in the cells, but not on the faces \cite{AEP_2018}. Another perspective for future work is to analyze the fully discrete scheme involving a DIRK time scheme combined with the HHO space discretization.}
\ifHAL
\addcontentsline{toc}{section}{References}
\else
\fi

\ifHAL
\FloatBarrier
\else
\fi
\bibliographystyle{abbrv}
\bibliography{references.bib}

\end{document}